\newtheorem{theorem}{Theorem}[section]
\newtheorem{lemma}[theorem]{Lemma}
\newtheorem{prop}[theorem]{Proposition}
\newtheorem{corollary}[theorem]{Corollary}
\newtheorem*{claim}{Claim}
\theoremstyle{definition}
\newtheorem{rem}[theorem]{Remark}
\newcommand{\tc}[1]{\textcolor{black}{#1}}
\newcommand\pf{\begin{proof}}
\newcommand\epf{\end{proof}}
\let\oldtocsection=\tocsection
\let\oldtocsubsection=\tocsubsection
\renewcommand{\tocsection}[2]{\hspace{0em}\oldtocsection{#1}{#2}}
\renewcommand{\tocsubsection}[2]{\hspace{1em}\oldtocsubsection{#1}{#2}}
\numberwithin{equation}{section}
\title[Smoothness of commutative Hopf algebras]
{Smoothness of commutative Hopf algebras}
\author[K.~Egami]{Kensuke Egami}
\address{Kensuke Egami,
Graduate School of Pure and Applied Sciences, 
University of Tsukuba, Ibaraki 305-8571, Japan}
\email{gmknsk@gmail.com}
\author[A.~Masuoka]{Akira Masuoka}
\address{Akira Masuoka,
Department of Mathematics, 
University of Tsukuba, 
Ibaraki 305-8571, Japan}
\email{akira@math.tsukuba.ac.jp}
\author[K.~Suzuki]{Kenta Suzuki}
\address{Kenta Suzuki,
NIPPI Corporation,
3175 Showa-machi, Kanazawa-ku, Yokohama 236-8540, JAPAN}
\email{suzukikentaci409c@docomo.ne.jp}
\begin{document}

\begin{abstract}
Hopf algebras, most generally in a semisimple abelian symmetric monoidal category, 
are here supposed to be commutative but not to be of finite-type, and their (equivariant) smoothness are discussed. 
Given a Hopf algebra $H$ in a category such as above, it is proved that the following are equivalent: 
(i)~$H$ is smooth as an algebra; (ii)~$H$ is smooth as an $H$-comodule algebra; (iii)~the product morphism
$S_H^2(H^+) \to H^+$ defined on the 2nd symmetric power is monic. 
Working over a field $k$ of characteristic zero, we prove: (1)~every ordinary Hopf algebra, i.e., such
in the category $\mathsf{Vec}$ of vector spaces, satisfies the equivalent conditions (i)--(iii) and some others;
(2)~every Hopf algebra in the category $\mathsf{sVec}$ of super-vector spaces has a certain property that
is stronger than (i). In the case where $\operatorname{char}k=p>0$, there
are shown weaker properties of ordinary Hopf algebras and of Hopf algebras in $\mathsf{sVec}$ or in the 
ind-completion $\mathsf{Ver}_p^{\mathrm{ind}}$ of the Verlinde category.
\end{abstract}

\maketitle


\noindent
{\sc Key Words:}
Commutative Hopf algebra, 
Smooth, 
Equivariantly smooth,
Locally complete intersection,
Hochschild cohomology, 
Andr\'{e}-Quillen (co)homology

\medskip
\noindent
{\sc Mathematics Subject Classification (2010):}
16T05 
14L15, 

\section{Introduction: background and main theorems}\label{Sec:intro}

This paper discusses smoothness of commutative Hopf algebras not necessarily of finite type,
which include commutative Hopf-algebra objects in general categories with appropriate properties.
We thus say ``of finite type" for ``finitely generated".
We have three main theorems which are explicitly formulated in the following subsections.

\subsection{Smoothness of ordinary Hopf algebras}\label{S1.1}
Let us work over an arbitrary field $k$, and vector spaces, linear maps and tensor products mean those over $k$.
Algebras and Hopf algebras, which indicate those over $k$, 
are supposed to be commutative unless otherwise stated. 

An algebra is said to be \emph{smooth} (over $k$) \cite[Section 9.3.1]{Wei}, if given an algebra $R$ and a nilpotent ideal $I$
of $R$, every algebra map $f : A \to R/I$ can \emph{lift} to some algebra map $g : A \to R$, i.e., the $g$, 
composed with the natural projection $R \to R/I$, turns into $f$. 
It is known that in the case where $A$ is Noetherian, 
the condition is equivalent 
to saying that $A$ is geometrically regular, i.e., it remains regular after base extension to the algebraic closure of $k$. 
The definition above is suited when we discuss, as we will do, smoothness of algebras not necessarily 
of finite type.
As another advantage the definition, which concerns algebras in the symmetric monoidal category $\mathsf{Vec}$
of vector spaces, can directly extend to those in general categories with appropriate properties. 
For example, given a Hopf algebra $H$, we can (and we do) discuss smoothness of \emph{$H$-comodule algebras},
i.e., algebras in the symmetric monoidal category
of (right or left) $H$-comodules. 
We say that an $H$-comodule algebra $A$ is \emph{$H$-smooth}
(or \emph{equivariantly smooth}, without referring to the Hopf algebra) if given an $H$-comodule algebra and a nilpotent $H$-costable ideal $I$ of $R$, 
every $H$-colinear algebra map $A \to R/I$ can lift to such a map $A \to R$. 
Here and in what follows, comodules are meant to be 
right comodules unless otherwise stated. 
Our interest is in the case where $A$ is $H$, itself. 

Let $H$ be a Hopf algebra. Let $H^+=\operatorname{Ker}(\varepsilon_H:H\to k)$ denote the augmentation
ideal, and let $S^2_H(H^+)$ denote the 2nd symmetric power of the $H$-module $H^+$.
Let us consider the conditions:
\begin{itemize}
\item[(a)] 
$H$ is smooth,
\item[(b)] 
$H$ is $H$-smooth,
\item[(c)] 
$H$ is $Q$-smooth for every quotient Hopf algebra $Q$ of $H$ such that
$H$ is coflat as a $Q$-comodule, i.e., the co-tensor product
$H\square_Q$ (see \eqref{Ecotens}) is exact as a functor,
\item[(d)] 
the $H$-linear map
\[
\mu_H : S^2_H(H^+)\to H^+
\]
which is induced by the product $H^+\otimes_H H^+\to H^+$, $a \otimes_H b\mapsto ab$ is injective,
\item[(e)] 
$H$ is geometrically reduced, i.e., it remains reduced after the base extension to the
algebraic closure of $k$,
\end{itemize}
and in addition, in case $\operatorname{char}k=p>0$,
\begin{itemize}
\item[(f)]
the Frobenius map
\[
F_H : k^{1/p}\otimes H \to H,\quad F_H(a^{1/p}\otimes h)=ah^p
\]
is injective, where $k^{1/p}\otimes$ presents the base extension 
along the isomorphism $k \to k^{1/p}$, $a\mapsto a^{1/p}$ of fields.
\end{itemize}

\begin{rem}
Recall that the Hopf algebras form a category anti-isomorphic to the category of affine group schemes.
Let $\mathfrak{G}=\operatorname{Spec}H$ be the affine group scheme corresponding to $H$.
\begin{itemize}
\item[(1)] Condition (a) is restated so that
\begin{itemize}
\item[(a)] Given an algebra $R$ and a nilpotent ideal $I$ of $R$, the group map
\begin{equation}\label{EGR}
\mathfrak{G}(R)\to \mathfrak{G}(R/I)
\end{equation}
associated with the projection $R \to R/I$ is surjective.
\end{itemize}
\item[(2)] A right (resp., left) $H$-comodule algebra is identified with a left (resp., right) $\mathfrak{G}$-equivariant
algebra. A right $H$-comodule (or left $\mathfrak{G}$-equivariant) algebra  turns into a left (resp., right) one by twisting the coaction through the antipode of $H$ (resp., the action through the inverse of $\mathfrak{G}$), 
and vise versa. Under the resulting one-to-one correspondence, $H$ corresponds to $H$, itself. Therefore, 
as for (b), $H$ is $H$-smooth as a right $H$-comodule algebra if and only if it is so as a left $H$-comodule algebra. 
\item[(3)]
By a similar reason, as for (c), $H$ is coflat as a right $Q$-comodule if and only if it is so as a left $Q$-comodule.
Recall that every closed subgroup scheme $\mathfrak{H}$ of $\mathfrak{G}$ is uniquely in the form
$\mathfrak{H}=\operatorname{Spec}Q$, where $Q$ is a quotient Hopf algebra of $H$. 
It is known (see \cite[p.452]{T0}, \cite[Theorem 2 and the following Remark]{Doi}) that 
the equivalent coflatness conditions
above are equivalent to saying that the quotient sheaf $\mathfrak{G}/\mathfrak{H}$ in the fpqc topology
is an affine scheme; see also Remark \ref{Rcoflatinj} below.
\end{itemize}
\end{rem}

The first main theorem is the following. 

\begin{theorem}\label{T1st}
We have the following:
\begin{itemize}
\item[(1)]
If $\operatorname{char}k=0$, every Hopf algebra $H$ satisfies Conditions (a)--(e) above. 
\item[(2)]
If $\operatorname{char}k> 0$, the Conditions (a)--(f) are equivalent to each other. 
\end{itemize}
\end{theorem}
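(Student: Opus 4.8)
The plan is to sort the conditions into three groups and then isolate the single implication that carries the real weight. First, the ``horizontal'' equivalences (a)$\Leftrightarrow$(b)$\Leftrightarrow$(d), valid in every characteristic, are instances of the general categorical equivalence announced in the abstract, specialized to the symmetric monoidal category $\mathsf{Vec}$; condition (c) is tied to (b) by taking the quotient Hopf algebra $Q=H$ (for which $H$ is trivially coflat) in one direction, and by a coflat-descent argument in the other. Second, one has the cheap implication (a)$\Rightarrow$(e): formal smoothness over $k$ is preserved under base change to $\bar{k}$, and formal smoothness over a field forces geometric regularity, in particular geometric reducedness. In characteristic $p$, (e)$\Leftrightarrow$(f) is the standard relative-Frobenius criterion, namely that $H$ is geometrically reduced over $k$ exactly when $k^{1/p}\otimes H\to H$ is injective. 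After these formal reductions, both parts of the theorem come down to the single assertion \emph{$(\ast)$: a geometrically reduced commutative Hopf algebra is smooth}, combined with Cartier's theorem that in characteristic zero every commutative Hopf algebra is reduced; since the base change of $H$ to $\bar{k}$ is again a Hopf algebra, hence reduced, (e) holds unconditionally in characteristic zero, and $(\ast)$ then supplies (a)--(d), while in characteristic $p$ the assertion $(\ast)$ is precisely what closes the cycle (e)$\Rightarrow$(a).

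To prove $(\ast)$ I would first treat the finite-type case and then pass to a colimit. Write $H=\bigcup_\alpha H_\alpha$ as the directed union of its finitely generated Hopf subalgebras, a classical structure theorem. Geometric reducedness is inherited by each $H_\alpha$, since nilpotency of an element is witnessed at a finite stage and a subalgebra of a reduced algebra is reduced. For a finitely generated $H_\alpha$ the group scheme $\operatorname{Spec}H_\alpha$ is of finite type, and there geometric reducedness is equivalent to smoothness by the classical homogeneity argument: the smooth locus is open and, for a geometrically reduced group scheme, dense, while being stable under translation it must be the whole group. Hence each $H_\alpha$ is a smooth $k$-algebra, and in particular satisfies (d) by the finite-type case of (a)$\Rightarrow$(d).

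The main obstacle is the final step, since $H$ need not be of finite type and the lifting property (a) does not visibly pass to the filtered colimit $H=\operatorname{colim}_\alpha H_\alpha$ (compatible lifts over the various $H_\alpha$ need not exist a priori). I would sidestep this by propagating condition (d) rather than (a): injectivity of $\mu_H\colon S^2_H(H^+)\to H^+$ behaves well along the directed union. Concretely, $H$ is faithfully flat over each Hopf subalgebra $H_\alpha$ (a classical theorem for commutative Hopf algebras), so by compatibility of symmetric powers with flat base change the injectivity of $\mu_{H_\alpha}$ is preserved by $H\otimes_{H_\alpha}-$, and passing to the filtered, hence exact, colimit yields injectivity of $\mu_H$, i.e.\ (d) for $H$. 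The horizontal equivalence (d)$\Rightarrow$(a) then upgrades this to smoothness of $H$, completing $(\ast)$ and with it both parts of the theorem. One could instead argue through the cotangent complex $L_{H/k}\simeq\operatorname{colim}_\alpha\Omega_{H_\alpha/k}=\Omega_{H/k}$, which is projective by the homogeneity of $\operatorname{Spec}H$, and solve the lifting problem inductively along a tower of square-zero extensions; but the route through (d) is the cleaner one, precisely because it converts the colimit difficulty into the exactness of filtered colimits, and is the approach I would ultimately pursue.
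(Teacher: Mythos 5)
Your Part 1 and the ``upward'' half of Part 2 are essentially correct, and they follow the paper's own route: restrict (e) to the finite-type Hopf subalgebras $H_\alpha$, use the classical finite-type equivalence (e)$\Leftrightarrow$(a) there, convert to (d), and propagate (d) along the directed union $H=\bigcup_\alpha H_\alpha$ by exactness of filtered colimits --- this is exactly the paper's device \eqref{Eindlim} (your detour through faithfully flat base change $H\otimes_{H_\alpha}-$ is correct but unnecessary, since one may take the colimit of the $\mu_{H_\alpha}$ directly over the varying base rings). Together with Theorem \ref{T2nd} and the standard fact (e)$\Leftrightarrow$(f), this yields all of Part 1 and, in characteristic $p$, the implications (e)$\Leftrightarrow$(f)$\Rightarrow$(a)$\Leftrightarrow$(b)$\Leftrightarrow$(c)$\Leftrightarrow$(d).

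The gap is the one remaining implication (a)$\Rightarrow$(e) (equivalently (d)$\Rightarrow$(e)), which you dispose of in one line: ``formal smoothness over a field forces geometric regularity, in particular geometric reducedness.'' That statement is a theorem of Grothendieck about \emph{Noetherian} algebras; for algebras not of finite type --- the only case at issue in this paper --- it is not a formal consequence of the lifting property, and no elementary proof is known. Indeed, this is precisely the implication that the introduction describes as long unknown (Takeuchi had proved (f)$\Rightarrow$(a)$\Rightarrow$(d) and the finite-type equivalence; what was missing is the passage back from smoothness of the big $H$ to conditions seen on finite-type pieces). Note that smoothness, unlike reducedness, does not visibly descend to Hopf subalgebras $J\subset H$: injectivity of $\mu_H$ does not a priori restrict to injectivity of $\mu_J$, because the natural map $S_J^2(J^+)\to S_H^2(H^+)$ could fail to be injective. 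The paper's entire Section \ref{subsec:charp} is devoted to exactly this point: it proves that the restriction map $H_s^2(H,k)\to H_s^2(J,k)$ is surjective (Proposition \ref{Pres}), equivalently that $S_J^2(J^+)\to S_H^2(H^+)$ is injective, and this proof is not formal --- it identifies $H_s^2$ with the first Andr\'{e}--Quillen cohomology, invokes the Jacobi--Zariski sequence $D^1(H\mid k)\to D^1(J\mid k)\to D^2(H\mid J)$, uses flat base change along $H\otimes_J H\simeq H\otimes Q$ (see \eqref{EHJQ}), the structure theorem for finite-type Hopf algebras over a perfect field (Proposition \ref{Plci}) to conclude that $Q$ is locally complete intersection (Corollary \ref{Clci}), and finally Avramov's theorem to get $D^2(Q\mid k,\,k)=0$. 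Your proposal contains no substitute for this step, so in characteristic $p>0$ it proves only that (e) and (f) imply (a)--(d), not the asserted equivalence of (a)--(f).
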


Suppose $\operatorname{char}k=0$. 
It is well known that every Hopf algebra satisfies (e),  and in addition, (a) at least when 
it is of finite type; see Section \ref{subsec:charzero}. 

Suppose $\operatorname{char}k=p>0.$
Then there exist many Hopf algebras that do not satisfy the conditions; in particular, there are known simple examples of Hopf algebras which are reduced, but not geometrically reduced, see \cite[Remark 39.8.5]{Stack}, for example. 
It is proved by Takeuchi \cite[Proposition 1.9]{T} that (f)$\Rightarrow$(a)$\Rightarrow$(d), and these conditions are equivalent to each other 
in case $H$ is of finite type.
Likely, it has been long unknown whether they are equivalent in general. It is also proved by \cite[Corollary 1.6]{T}
that every Hopf algebra that satisfies (f) has a certain stronger smoothness property; the result will be proved
by Theorem \ref{T3rd} (2) below in a generalized situation, see Section \ref{S1.4}. 

Theorem \ref{T1st} will be proved in Section \ref{Sec3}. It is after we prove the equivalence 
of Conditions (a)--(d) for Hopf algebras in a general category with appropriate properties;
more details will be found in the following subsection. 

\subsection{Smoothness of Hopf algebras in categories}\label{S1.2}
Let $\mathscr{C}$ be a symmetric monoidal category satisfying the assumptions:
\begin{itemize}
\item[(I)] $\mathscr{C}$ is, as a category, semisimple abelian, i.e., abelian and 
every object is a coproduct of simple objects, and
\item[(II)] the tensor-product functor is bi-additive, which is then necessarily exact by (I).
\end{itemize}

The category $\mathsf{Vec}$ of vector spaces is an obvious example of $\mathscr{C}$. 
Conditions (a)--(d) concerning Hopf algebras in $\mathsf{Vec}$
make sense more generally for those Hopf algebras in $\mathscr{C}$ which are commutative by assumption; 
this will be verified in detail in the paragraph following Proposition \ref{PH2C}. 
The second main theorem 
is the following. 

\begin{theorem}\label{T2nd}
For a Hopf algebra $H$ in $\mathscr{C}$, Conditions (a)--(d) are equivalent to each other.
\end{theorem}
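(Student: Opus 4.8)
The strategy is to express each of the smoothness conditions (a), (b), (c) through the cotangent complex $L_{H/k}$ and to prove that each is equivalent to the single vanishing $H_1(L_{H/k})=0$, which I then match with (d). As orientation, (c) contains (a) and (b) as its extreme quotients $Q=k$ and $Q=H$ (for the latter $H\,\square_H(-)\cong\mathrm{id}$, so that $H$ is coflat over itself); the argument below treats all admissible $Q$ uniformly, so it suffices to show that each of (a), (b), (c) is equivalent to (d).

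Because $\mathscr{C}$ is semisimple abelian with a bi-additive---hence exact---tensor product, the commutative deformation theory transfers verbatim: K\"ahler differentials, the cotangent complex, and Andr\'e--Quillen cohomology $D^i(A/k,M)$ are available, the obstruction to lifting an algebra map along a square-zero extension with kernel $M$ lies in $D^1(A/k,M)$, and a d\'evissage along the powers of a nilpotent ideal reduces arbitrary nilpotent lifting to the square-zero case. Thus (a) is equivalent to the vanishing of $D^1(H/k,-)$, i.e. to $\Omega_{H/k}$ being projective together with $H_1(L_{H/k})=0$. The Hopf structure settles the first clause and trivializes the complex: translation-invariance gives $\Omega_{H/k}\cong H\otimes(H^+/(H^+)^2)$ and, more generally, exhibits $L_{H/k}$ as a ``trivial bundle'', so each $H_n(L_{H/k})\cong H\otimes V_n$ with $V_n$ its augmentation fibre. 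A dimension shift along $0\to H^+\to H\to k\to 0$ (using $\operatorname{Tor}^H_{>0}(k,H)=0$) identifies the degree-one fibre with the symmetric part of $\operatorname{Tor}^H_2(k,k)$, namely
\[
V_1\;=\;k\otimes_H H_1(L_{H/k})\;\cong\;\ker\bigl(\mu_H:S^2_H(H^+)\to H^+\bigr).
\]
Hence $H_1(L_{H/k})=0\iff V_1=0\iff\ker\mu_H=0$, which is (a)$\iff$(d).

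For (b) I would run the same argument equivariantly. The fundamental theorem of Hopf modules identifies $(H,H)$-Hopf modules with $\mathscr{C}$, so the equivariant cotangent complex is again the trivial bundle above and the equivariant $D^1$ is governed by the same fibre $V_1\cong\ker\mu_H$; this gives (b)$\iff$(d). For an arbitrary coflat quotient $Q$, coflatness makes $-\,\square_Q H$ exact and yields a relative fundamental theorem of Hopf modules---equivalently, faithfully flat descent along the coinvariant subalgebra $H^{\mathrm{co}Q}\hookrightarrow H$---so the $Q$-equivariant obstruction is again computed from its fibre, which is once more $\ker\mu_H$. Since the multiplication map $\mu_H$ does not depend on $Q$, this proves $Q$-smoothness $\iff\ker\mu_H=0$ for every coflat $Q$, hence (c)$\iff$(d) and closes the equivalences.

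The main obstacle is the fibre identification of the second paragraph and its coflat refinement in the third: one must establish, uniformly in the trivial, full, or intermediate equivariance, that $H_1(L_{H/k})$ is a trivial bundle whose fibre is exactly the symmetric obstruction $\ker\mu_H$, and that coflatness is precisely the hypothesis keeping the descent and the relevant cotensor functors exact so that this single class controls a general quotient $Q$. The remaining ingredients---the square-zero obstruction theory, the d\'evissage, and the two extreme specializations of (c)---are formal.
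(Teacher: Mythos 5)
Your endpoint is the right one: the paper, too, funnels everything through the object $\operatorname{Ker}(\mu_H)$, via the isomorphism $H^2_s(H,X)_{\mathscr{C}}\simeq \mathscr{C}(\operatorname{Ker}(\mu_H),X)$ of Proposition \ref{PH2C}, and your claimed identification of the fibre $V_1$ with $\operatorname{Ker}(\mu_H)$ is consistent with that. The gap is in the machinery you invoke to get there. You assert that K\"ahler differentials, the cotangent complex and Andr\'e--Quillen cohomology ``transfer verbatim'' to $\mathscr{C}$; but $\mathscr{C}$ is allowed to be, for instance, $\mathsf{Ver}_p^{\mathrm{ind}}$ in characteristic $p$, where no such theory is on record and where the building blocks of simplicial resolutions---free commutative algebras, i.e.\ symmetric algebras---behave exotically (e.g.\ $S^p(L)=0$ for every simple $L\ne\mathbf{1}$). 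Even restricted to $\mathsf{Vec}$ in characteristic $p$, your key step, identifying $V_1$ with ``the symmetric part of $\operatorname{Tor}^H_2(k,k)$'' by dimension shifting, leans on Quillen-type decompositions of $\operatorname{Tor}$ by Andr\'e--Quillen homology that are characteristic-zero theorems; the degree-two statement you need happens to be true, but it is exactly the kind of claim (symmetric versus divided versus exterior powers) that must be proved, not cited, in positive characteristic. The paper needs none of this: its proof uses only that $\mathscr{C}$ is split and $\otimes$ exact, working with the explicit normalized Hochschild complexes \eqref{Ecomp}.

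The more serious gap is the equivariant step, which you describe as ``running the same argument equivariantly.'' The category $\mathscr{C}^H$ is \emph{not} semisimple and in general has enough injectives but no projectives, so there is no equivariant cotangent complex obtained by the same construction; nothing here is formal. This is precisely where the paper does its real work: Lemmas \ref{LHopfm} and \ref{Ladj} and Corollary \ref{Cinj} feed into Propositions \ref{PHochExt} and \ref{PH2}, which prove the comparison isomorphism $H^2_s(H,X)_{\mathscr{C}}\simeq H^2_s(H,X\otimes H)_{\mathscr{C}^H}$, reducing the equivariant obstruction group to a plain one with trivial coefficients. Your sketch invokes the Hopf-module theorem only to identify coefficient objects, which is not the statement needed. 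Similarly, for (c) you appeal to an equivalence between coflatness of $H$ over $Q$ and faithfully flat descent along $H^{\operatorname{co}Q}\hookrightarrow H$; already in $\mathsf{Vec}$ this is a delicate theorem of Takeuchi, and in $\mathscr{C}$ it is not available. It is also not needed: the paper's implications are cyclic, (c)$\Rightarrow$(a)$\Rightarrow$(d)$\Rightarrow$(b)$\Rightarrow$(c), so only (b)$\Rightarrow$(c) must be supplied beyond the trivial specializations, and Proposition \ref{PMOTeq} does this directly---the lax monoidal functor $\square_Q H:\mathscr{C}^Q\to\mathscr{C}^H$ and its bijection on algebra morphisms transport a nilpotent lifting problem over $Q$ to one over $H$, coflatness ensuring that $R\,\square_Q H\to R/I\,\square_Q H$ is epic with nilpotent kernel. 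To salvage your plan you would have to build the Andr\'e--Quillen formalism in $\mathscr{C}$ and in $\mathscr{C}^H$ from scratch and prove the descent statements you invoke, each of which is a larger task than the theorem itself.
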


This will be proved in the following Section \ref{Sec2}. 
One will see that among the conditions, a key is (b), the equivariant smoothness; the background 
of the notion will be seen in the following subsection. Important is the fact that
Condition (b) is equivalent to saying that for every object $X$ of $\mathscr{C}$, the symmetric 2nd Hochschild cohomology (constructed in $\mathscr{C}$) of $H$ with coefficients in $X$ vanishes,
\[
H_s^2(H,X)_{\mathscr{C}}=0,
\]
where $X$ is regarded as a trivial $H$-module through the counit of $H$. 

As was mentioned above, we will prove the first main theorem, Theorem \ref{T1st},
after proving (and by using) the second one, Theorem \ref{T2nd}. A point to prove
Part 2 (the positive characteristic case) of Theorem \ref{T1st} is to show 
that the above cohomology-vanishing condition 
for finite-type (ordinary) Hopf algebras implies the same condition for Hopf algebras in general. 
This will be shown by proving that the restriction map for symmetric 2nd Hochschild cohomologies
(constructed in $\mathsf{Vec}$)
\begin{equation}\label{Eintres}
\mathrm{res} : H_s^2(H,k) \to H_s^2(J,k)
\end{equation}
is surjective, where $H$ is an ordinary Hopf algebra, and $J$ is a Hopf subalgebra of $H$; see Proposition \ref{Pres}. 
The proof depends on the fact that the 2nd cohomology in question is naturally isomorphic to the 1st Andr\'{e}-Quillen
cohomology, and uses some results on the latter cohomology that include substantial ones by Avramov \cite{A}. 


\subsection{Background around the equivariant smoothness}\label{S1.3}
The notion of \emph{$H$-crossed products}, where $H$ is a not necessary commutative ordinary Hopf algebra,
was introduced by Blattner et al. \cite{BCM}; it generalizes group crossed products, for which
$H$ is a group algebra as a special case. 
To be more explicit, an $H$-crossed product, $R \rtimes_{\sigma}\! H$,  over a (not necessarily commutative) algebra $R$ is constructed on the $H$-comodule $R\otimes H$, subject to an action (in a weak sense) by $H$ on $R$ and 
a Hopf 2-cocycle $\sigma : H \otimes H \to R$. 
As was shown by Doi et al. \cite{DT}, the $H$-crossed products are intrinsically characterized as \emph{$H$-cleft extensions}, i.e., $H$-comodule algebras with 
invertible (with respect to the convolution-product) $H$-colinear map from $H$.
The second-named author \cite{M1} pointed out (in fact, in the dual situation) that if the $R$ has an augmentation $R \to k$ with square-zero kernel, the \emph{augmented} $H$-cleft extensions over $R$ are classified by the 2nd Hochschild cohomologies. 
As a result it was essentially proved by \cite[Theorem 4.1]{M1} (see also \cite[Theorem 1.2, Proposition 1.10]{MO}) that if $H$ is smooth, then it is $H$-smooth in the present words; the converse holds, as well, but this is
rather easy, as will 
be seen in Proposition \ref{PMOTeq} (modified into the \tc{commutative} situation);
see also \cite[Proposition 3.19]{MOT}. 
The last mentioned result of \cite{M1} was re-proved by Ardizonni et al. \cite[Theorem 5.9 a)]{AMS} based on  categorical observation using the notion of (co)separability. 
We reach cohomologies directly (not through augmented cleft extensions), and deduce a key implication, (a)$\Rightarrow$(d), of Theorem \ref{T2nd} from an isomorphism of the relevant cohomologies; see Proposition \ref{PH2}.

Nevertheless, it is a useful method of computing 2nd Hochschild cohomologies of a Hopf algebra $H$ to classify the corresponding augmented $H$-cleft extensions, as was shown in \cite{M3} in the case
where $H$ is the quantized universal enveloping algebra. In the present situation in which 
the relevant (Hopf) algebras are supposed to be commutative, we
discuss, as was mentioned above, the symmetric 2nd Hochschild cohomology $H_s^2(H, k)$ with coefficients in the trivial $H$-module $k$, which is precisely in one-to-one correspondence with the set of the isomorphism classes of 
all  (commutative) augmented $H$-cleft extension over the algebra $k[T]/(T^2)$ of dual numbers. This is the topic of Section \ref{Sec4}. Section \ref{subsec:4.1} reviews the above-mentioned one-to-one correspondence.
It is applied, in the following two subsections, to compute $H_s^2(H, k)$ of some examples of Hopf algebras $H$
in positive characteristic.
In Section \ref{subsec:sample1}, $H$ is supposed to be the Hopf algebra given in \cite[Example 1.7]{T}
which is generated by an $\infty$-sequence of primitive elements. In Section \ref{subsec:sample2}, $H$
is supposed to be a finite-type group algebra, and the surjectivity of 
the restriction map \eqref{Eintres} for such an $H$ is verified by explicit computation.

\subsection{Stronger smoothness properties}\label{S1.4}
Beside the obvious $\mathsf{Vec}$, the category $\mathscr{C}$ discussed in Section \ref{S1.2}
has as examples, 
\begin{equation}\label{Ecat}
\mathsf{sVec}\quad \text{and}\quad \mathsf{Ver}_p^{\mathrm{ind}},
\end{equation}
which denote the category of super-vector spaces and the ind-completion of the Verlinde category
$\mathsf{Ver}_p$ in positive characteristic $p$ (see \cite{O}, \cite{Ven}), respectively. 
These categories and the associated tensor-product functors are $k$-linear, where $k$ denotes the base field. 
Both naturally include $\mathsf{Vec}$ as a symmetric monoidal full subcategory. Recently, they are attracting attention due to the following remarkable results which assume that $k$ is algebraically closed:
in case $\operatorname{char}k=0$, Deligne \cite{D} proved that every $k$-linear artinian rigid symmetric monoidal category $\mathscr{T}$, such that (i)~the tensor-product functor is $k$-bilinear, 
(ii)~the unit object is simple, and (iii)~$\mathscr{T}$ has moderate growth, has 
a fiber
functor 
into the category $\mathsf{svec}$ of finite-dimensional super-vector spaces, whence it is equivalent to
the category of comodules in $\mathsf{svec}$ over some Hopf algebra in $\mathsf{sVec}$;
in case $\operatorname{char}k=p>0$, 
Coulembier et al. \cite{CEO} proved that the analogous conclusion in which $\mathsf{svec}$ is replaced by
$\mathsf{Ver}_p$ holds, if (and only if) (iv)~$\mathscr{T}$
is Frobenius and exact, in addition. Here are remarks: 
$\mathsf{sVec}$ is the ind-completion $\mathsf{svec}^{\mathrm{ind}}$ of $\mathsf{svec}$;
Condition (iv) is necessarily satisfied if $\mathscr{T}$ is semisimple.

We will show that Hopf algebras, which are commutative by assumption, in these categories have
some properties stronger than smoothness. 
The familiar words ``(Hopf) superalgebras" meaning (Hopf) algebras in $\mathsf{sVec}$ will not be used in this paper. 
Algebras of $\mathsf{sVec}$
in $\operatorname{char}k=2$ are supposed to satisfy the assumption that every even element is central and every odd element is square-zero, which is stronger than the (super-)commutativity. 
For $\mathsf{Ver}_p^{\mathrm{ind}}$, we assume that $k$ is an algebraically closed
field of characteristic $p >0$. Since $\mathsf{Ver}_2^{\mathrm{ind}}=\mathsf{Vec}$
and $\mathsf{Ver}_3^{\mathrm{ind}}=\mathsf{sVec}$, as is shown in \cite[Example 3.2]{O}, we assume $p \ge 5$, in addition. Then, $\mathsf{Ver}_p^{\mathrm{ind}}$ includes properly $\mathsf{sVec}$ as a symmetric monoidal 
full subcategory. 

The third main theorem is the following. 

\begin{theorem}\label{T3rd}
We have the following.
\begin{itemize}
\item[(1)]
Assume $\operatorname{char}k=0$. 
Let $H$ be a Hopf algebra in $\mathsf{sVec}$. Given an algebra
$R$ in $\mathsf{sVec}$ and its nil ideal $I$, i.e., an ideal consisting of nilpotent elements, 
every algebra morphism $H \to R/I$
can lift to some algebra morphism $H \to R$.
\item[(2)]
Assume $\operatorname{char}k=p>0$, and let us work in $\mathsf{sVec}$
or $\mathsf{Ver}_p^{\mathrm{ind}}$.
Let $H$ be a Hopf algebra in the category, such that the naturally associated, ordinary quotient Hopf algebra
$\overline{H}$ (see \eqref{EHbar}, \eqref{EHbar1}) is smooth. 
Given an algebra
$R$ and a bounded nil ideal $I$ of $R$,  
every algebra morphism $H \to R/I$
can lift to some algebra morphism $H \to R$.
\end{itemize}
\end{theorem}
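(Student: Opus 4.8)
The plan is to handle both parts by a single reduction, followed by two different core arguments according to the characteristic. First I would isolate the structural input provided by the categories: the kernel of the canonical surjection $H \twoheadrightarrow \overline{H}$ onto the ordinary Hopf algebra of \eqref{EHbar}, \eqref{EHbar1} is generated by the ``non-$\mathsf{Vec}$'' homogeneous components, whose elements are nilpotent of index bounded by a constant depending only on the category (index $2$ for the odd part of $\mathsf{sVec}$ when $\operatorname{char}k\neq 2$; a bound depending on $p$ for the simple objects $L_2,\dots,L_{p-1}$ of $\mathsf{Ver}_p$). I would exploit the corresponding algebra splitting $H\cong \overline{H}\ot C$ in the ambient category, where $C$ is the local, bounded-nilpotent ``exterior-type'' factor. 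Under such a splitting an algebra morphism $H\to R$ is the datum of an algebra morphism $\overline{H}\to R$ together with a linear assignment on the nilpotent directions; since $R\to R/I$ is an epimorphism respecting the homogeneous decomposition, those nilpotent directions always lift freely (one simply chooses preimages on a basis). Thus in both parts the problem collapses to lifting $\overline{H}\to\overline{R/I}$ along the induced (bounded) nil ideal of the ordinary algebra $\overline{R}$, and here the hypothesis that $\overline{H}$ is smooth becomes available (automatic in characteristic $0$ by Theorem \ref{T1st}(1)).

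For Part (1), where $I$ is an arbitrary nil ideal, I would prove the reduced statement by a Zorn argument over the finite-type sub-Hopf-algebras of $\overline{H}$, building the lift by adjoining generators one at a time. The point is that each increment is of finite type, so inside the target one may pass to a finitely generated, hence Noetherian, subalgebra, in which the nil ideal becomes \emph{nilpotent}; the incremental lift is then produced by smoothness in the form of Condition (a). What makes this work uniformly in characteristic $0$ is that every affine group scheme is there geometrically reduced and smooth (Theorem \ref{T1st}(1)), so the relevant finite-type extensions stay liftable and no obstruction accumulates along the chain.

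For Part (2), in characteristic $p$, the same Zorn-by-increments strategy fails: finite-type extensions of Hopf algebras can now be non-smooth (infinitesimal, e.g.\ Frobenius, kernels), so one cannot reduce increment-wise to nilpotent targets. Instead I would linearize the obstruction by Frobenius. Because $I$ is \emph{bounded} nil, there is a single exponent $e$ with $x^{p^{e}}=0$ for all $x\in I$, so the $p^{e}$-power endomorphism of $\overline{R}$ annihilates the induced ideal $\overline{I}$ and factors the composite
\[
\overline{H}\xrightarrow{\ \bar f\ }\overline{R}/\overline{I}\xrightarrow{\ (-)^{p^{e}}\ }\overline{R}.
\]
This exhibits a Frobenius-twisted lift, and smoothness of $\overline{H}$ in its characteristic-$p$ form, namely injectivity of the Frobenius map (Condition (f), equivalent to (a) by Theorem \ref{T1st}(2)), is what should allow one to ``divide by Frobenius'' and descend to a genuine lift; boundedness of $I$ is precisely what guarantees that a single twist by $p^{e}$ suffices.

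I expect the main obstacle to be the two core lifting steps for $\overline{H}$, not the reduction. In Part (1) the delicate point is the incremental extension in the Zorn argument: verifying that each one-generator extension remains liftable after contracting the target to a Noetherian subalgebra, and that unions along chains assemble into a compatible lift. In Part (2) the genuinely hard part is the Frobenius descent making the heuristic above precise, together with the more intricate structure of $\mathsf{Ver}_p^{\mathrm{ind}}$, where the splitting off of $\overline{H}$ and the bounded-nilpotency of the complementary directions require more care than in $\mathsf{sVec}$.
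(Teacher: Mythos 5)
Your reduction to $\mathsf{Vec}$ is exactly the paper's: by \eqref{Etenpr}, \eqref{Etenpr1}, an algebra morphism out of $H\cong\wedge(W^H)\otimes\overline{H}$ (resp.\ $S(W^H)\otimes\overline{H}$) amounts to an algebra morphism out of $\overline{H}$ together with a linear map on $W^H$, and the linear datum lifts trivially along $R\to R/I$. (Your side claim that the kernel of $H\to\overline{H}$ is nilpotent of index bounded by the category is false in characteristic zero when $W^H$ is infinite-dimensional, since $\bigl(\sum_{i=1}^{n}x_iy_i\bigr)^{n}\neq 0$ for independent odd $x_i,y_i$; but nothing in the reduction actually uses it.) The two core lifting arguments, however, each contain a genuine gap.

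In Part (1) your Zorn scheme is the paper's in outline, but invoking ``smoothness in the form of Condition (a)'' for the finite-type increment $K$ cannot close the argument: absolute smoothness of $K$ produces \emph{some} lift of $f|_K$, with no reason for it to agree on $J:=J_{\max}\cap K$ with the lift already constructed, and this compatibility is the entire difficulty (you name it as delicate but supply no mechanism for it). The paper's resolution is \emph{relative}: $J$ is of finite type by \cite[Corollary 3.11]{Tak}; the quotient Hopf algebra $Q:=K/J^+K$ is smooth because $\operatorname{char}k=0$; hence $K\otimes_JK\cong K\otimes Q$ is smooth over $K$, and faithfully flat descent (\cite[Proposition 5.1]{MOT}) makes $K$ smooth \emph{as a $J$-algebra}. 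That is what produces a lift on $K$ extending the one already fixed on $J$, and the two lifts are then pasted into a lift on $J_{\max}K$ via the algebra isomorphism $J_{\max}K\cong J_{\max}\otimes_JK$ of \cite[Proposition 6]{T0}. Without relative smoothness over $J$ and this pasting isomorphism, the maximal pair cannot be enlarged.

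In Part (2) the gap is more serious. The composite $\overline{H}\to\overline{R}/\overline{I}\xrightarrow{(-)^{p^e}}\overline{R}$ is indeed well defined and $p^e$-semilinear, but it only records the $p^e$-th powers that any lift would have to take; ``dividing by Frobenius'' would require extracting $p^e$-th roots inside the arbitrary target algebra $\overline{R}$, whereas injectivity of the Frobenius map is a hypothesis on the source $\overline{H}$, not on the target. What Condition (f) actually extracts from your composite is, at best, a genuine lift on the Hopf subalgebra $k\cdot\overline{H}^{p^e}$ (the image of the iterated Frobenius), and extending a lift along the purely inseparable inclusion $k\cdot\overline{H}^{p^e}\subseteq\overline{H}$ --- over which $\overline{H}$ is not smooth in general, so no smoothness argument applies --- is precisely the content of Takeuchi's \cite[Corollary 1.6]{T}. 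The paper does not reprove that theorem: it quotes it, and uses its own Theorem \ref{T1st}(2) only to convert the hypothesis that $\overline{H}$ is smooth into Takeuchi's Condition (f). Your sketch is in effect an attempt to reprove Takeuchi's result, and the descent step on which it rests does not go through as stated.
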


The conclusions above are restated so that the group map
$\mathfrak{G}(R)\to \mathfrak{G}(R/I)$ analogous to \eqref{EGR} is surjective,
where $\mathfrak{G}=\operatorname{Spec} H$ denotes the affine group scheme corresponding to $H$, i.e., 
the group-valued functor
which is defined on the category of algebras in the respective category, and is represented by $H$. 

As for Part 2, an ideal $I$ is said to be \emph{bounded nil}, if there is an integer $n >0$
satisfying the condition: $X^n=0$ for every simple sub-object $X$ of $I$. 
In $\mathsf{sVec}$ in arbitrary characteristic, an ideal $I$ of an algebra $R$ is (bounded) nil if and only
if in the even component $R_0$ of $R$, which is an ordinary algebra, the ideal $I_0$ of $R_0$ is (bounded) nil. 
In $\mathsf{Ver}_p^{\mathrm{ind}}$, the situation is the same; see the third paragraph of Section \ref{subsec:charp}.

These situations, combined with the tensor product decomposition of a Hopf algebra in the category (see \eqref{Etenpr}, \eqref{Etenpr1}), 
allow us to reduce the proof of 
Theorem \ref{T3rd} to the case where the category is $\mathsf{Vec}$. The proof will be given in Section \ref{Sec5}. 
Crucial for the proof of Part 2 is Takeuchi's result \cite[Corollary 1.6]{T} which was referred to in Section \ref{S1.1}
and is now explicitly stated: an ordinary Hopf
algebra in positive characteristic which satisfies Condition (f) has the stronger smoothness property which is an obvious analogue in $\mathsf{Vec}$ of the one stated in Part 2.
Example 1.7 of \cite{T}, which was also referred to before, shows that Part 2 cannot be strengthen 
with ``bounded nil" relaxed to ``nil" as in Part 1; see the first paragraph of Section \ref{subsec:sample1}.

The same idea as proving Part 1 will be used in the Appendix, to give a simple proof
of the known result that in $\mathsf{Vec}$, 
every torsor under an affine group scheme over an algebraically closed field is trivial. 
Recently, Wibmer \cite{Wib} gave an elementary proof to it, but ours would be simpler. 
We remark that the result holds more generally in $\mathsf{sVec}$ and in $\mathsf{Ver}_p^{\mathrm{ind}}$;
see Remark \ref{RTan}. 

\begin{rem}
We do not know whether the equivariant analogue of Theorem \ref{T3rd}, which replaces
``algebras" with ``$H$-comodule algebras", is true. One sees from the proof of Proposition
\ref{PMOTeq} that
the analogue is a stronger statement, i.e., it implies the theorem.
\end{rem} 

\section{Proof of Theorem \ref{T2nd}}\label{Sec2}

To prove Theorem \ref{T2nd}, we 
let $\mathscr{C}$ be a symmetric monoidal category such that (I)~$\mathscr{C}$ is semisimple abelian,
and (II)~the tensor-product functor is bi-additive, as in Section \ref{S1.2}. 
Concerning (I), what is actually needed in this section is 
the property of $\mathscr{C}$ that it is \emph{split}, i.e., every short exact sequence
is split; indeed, this property is weaker than the semisimplicity. 
The tensor product, the unit object and the symmetry of 
$\mathscr{C}$ will be denoted by $\otimes$, $\mathbf{1}$ and 
\[
c_{X,Y} : X \otimes Y \overset{\simeq}{\longrightarrow} Y \otimes X,\quad X, Y\in \mathscr{C},
\]
respectively. 
The unit or associativity constraints, when they are recognized from the context, are presented as 
identity morphisms. 

In what follows, all (Hopf) algebras in $\mathscr{C}$ are supposed to be commutative, unless otherwise stated. 
Let $A$ be an algebra in $\mathscr{C}$; the product and the unit will be denoted by
$m_A : A\otimes A \to A$ and $u_A : \mathbf{1}\to A$, respectively. 
By an \emph{$A$-module} we mean a left $A$-module $M$
in $\mathscr{C}$. It is naturally identified with a right $A$-module so that 
\[
\begin{xy}
(0,0)   *++{A\otimes M}  ="1",
(28,0)  *++{M\otimes A} ="2",
(14,-14) *++{M} ="3",
{"1" \SelectTips{cm}{} \ar @{->}^{c_{A,M}} "2"},
{"1" \SelectTips{cm}{} \ar @{->}_{a_M} "3"},
{"2" \SelectTips{cm}{} \ar @{->}^{a'_M} "3"}
\end{xy}
\]
commutes; here and in what follows, $a_M$ (resp., $a'_M$) denotes the left (resp., right) $A$-action on $M$. 
In addition, $M$ is regarded as an $(A,A)$-bimodule with the special property that the left and the right $A$-actions
are identified as above. 

A \emph{square-zero extension} of $A$ is an algebra $E$ in $\mathscr{C}$, equipped with an 
algebra morphism $p : E \to A$, which is epic as a morphism of $\mathscr{C}$,
and is such that the kernel $\operatorname{Ker} p$ is square-zero, $(\operatorname{Ker} p)^2=0$.
The last condition ensures that $\operatorname{Ker} p$ naturally turns into an $A$-module 
through $p$. Two square-zero extensions, $(E, p)$ and $(E',p')$, of $A$ are said to be \emph{isomorphic},
if there is an algebra isomorphism $E\to E'$ compatible with $p$ and $p'$. 

Given an $A$-module $M$, a \emph{Hochschild extension} of $A$ by $M$ is a short
exact sequence 
\begin{equation}\label{Eext}
\langle E\rangle : 0 \to M \overset{i}{\longrightarrow} E \overset{p}{\longrightarrow} A \to 0
\end{equation}
in $\mathscr{C}$ such that $(E, p)$ is a square-zero extension of $A$, and the isomorphism 
$i : M \to \operatorname{Ker} p$
is $A$-linear. An \emph{equivalence} $\langle E\rangle \to \langle E'\rangle$ between two Hochschild extensions of $A$ by 
$M$ is an algebra morphism (necessarily, isomorphism) $E \to E'$ between the middle terms which induces the identity morphisms
of $A$ and of $M$. 

By using the splitting property of $\mathscr{C}$
we see that the set 
\[
\operatorname{Ext}(A,\ M)_{\mathscr{C}}
\]
of all equivalence classes of the Hochschild 
extensions of $A$ by $M$ is in one-to-one correspondence with 
the symmetric 2nd Hochschild cohomology group
\begin{equation}\label{EH2AM}
H^2_s(A,\ M)_{\mathscr{C}},
\end{equation}
which is constructed in an obvious manner, 
generalizing the familiar one in the case where $\mathscr{C}=\mathsf{Vec}$. 
Moreover, $\langle E\rangle \mapsto (E, p)$ gives rise to a bijection from $\operatorname{Ext}(A,M)_{\mathscr{C}}$ to the set of
all isomorphism classes of the square-zero extensions of $A$ with kernel isomorphic to $M$. 
The zero cohomology class in 
$H^2_s(H,\ M)_{\mathscr{C}}$ corresponds to the equivalence class of the Hochschild
extensions $\langle E\rangle$, and also to the isomorphism class of the square-zero extensions $(E,p)$, such that
the cokernel $p : E \to A$ splits as an algebra morphism of $\mathscr{C}$. 

Let $\langle E\rangle$ be a Hochschild extension such as in \eqref{Eext}.
Choosing a section $j : A \to E$ of $p$ in $\mathscr{C}$,
we have $M\oplus\operatorname{Im}j=E$, whence $E$ is thus identified, as an object of $\mathscr{C}$, with 
$M\oplus A$ through $j$, so that 
\begin{equation}\label{EEMA}
E=M\oplus A.
\end{equation}
The one-to-one correspondence above is such as follows: the unit $u_E : \mathbf{1} \to E$
and the product $m_E : E\otimes E\to E$ are given uniquely
by a symmetric Hochschild 2-cocycle $s : A \otimes A \to M$ so that
\begin{equation}\label{Eum}
u_E=(-s\circ(u_A\otimes u_A),\ u_A),\quad
m_E=\begin{cases} (s,~m_A) &\text{on}\ A\otimes A;\\
(a_M,~0) &\text{on}\ A\otimes M;\\
(a'_M,~0) &\text{on}\ M\otimes A;\\
\ 0 &\text{on}\ M\otimes M.
\end{cases}
\end{equation}

Let $H$ be a Hopf algebra in $\mathscr{C}$. The coproduct and the counit will be denoted by
$\Delta_H : H \to H\otimes H$ and $\varepsilon_H : H \to \mathbf{1}$, respectively. 
We have the category $\mathscr{C}^H$ of right $H$-comodules
in $\mathscr{C}$, which naturally turns into an abelian symmetric monoidal category with respect to the
tensor product $\otimes$ and the unit object $\mathbf{1}$, just as in the familiar 
case where $\mathscr{C}=\mathsf{Vec}$ or $\mathsf{sVec}$. 
Due to the splitting property of $\mathscr{C}$ and the exactness of $\otimes H$,
a sequence of morphisms in $\mathscr{C}^H$ is exact if and only if it is so, regarded as 
a sequence of morphisms in $\mathscr{C}$. 
Algebras in $\mathscr{C}^H$, which are supposed to be commutative, are called \emph{$H$-comodule algebras}
in $\mathscr{C}$. An obvious example of such algebras is $H=(H,~\Delta_H)$. 
One can define \emph{square-zero extensions} and \emph{Hochschild extensions} in $\mathscr{C}^H$, just
as those in $\mathscr{C}$. 
Our interest concentrates in 
such square-zero extensions of $H$ and Hochschild extensions
with cokernel $H$. 

Let $(E,p)$ be a square-zero extension of $H$ in $\mathscr{C}^H$.
As was seen above for square-zero extensions in $\mathscr{C}$, the kernel
$\operatorname{Ker} p$ of $p$ is naturally regarded as an $H$-module 
in $\mathscr{C}^H$. Here one sees that the category
\[
{}_H\mathscr{C}^H={}_H(\mathscr{C}^H)
\]
of $H$-modules in $\mathscr{C}^H$ is abelian. 
In view of \cite[Section 4.1]{Sw}, one may call an object of ${}_H\mathscr{C}^H$ an \emph{$H$-Hopf module}
in $\mathscr{C}$. 
Given an object
$X$ of $\mathscr{C}$, regard it as a \emph{trivial} $H$-module and $H$-comodule 
in $\mathscr{C}$; thus, $H$ acts by
$\varepsilon_H\otimes \operatorname{id}_X: H\otimes X \to \mathbf{1} \otimes X=X$, and coacts 
by
$\operatorname{id}_X \otimes u_H : X= X \otimes \mathbf{1} \to X \otimes H$. 
One sees that the tensor product 
\begin{equation}\label{EXH}
X \otimes H
\end{equation}
of $H$-modules and $H$-comodules in $\mathscr{C}$ is an object of ${}_H\mathscr{C}^H$.

\begin{lemma}\label{LHopfm}
$X \mapsto X \otimes H$ gives rise to an equivalence 
$\mathscr{C}\approx {}_H\mathscr{C}^H$ of abelian categories. 
\end{lemma}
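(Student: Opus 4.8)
The statement is the \emph{fundamental theorem of Hopf modules} (Larson--Sweedler) transported into the symmetric monoidal category $\mathscr{C}$, so the plan is to build an explicit quasi-inverse by taking coinvariants and to verify the two comparison isomorphisms by an antipode argument. Write $\rho_M\colon M\to M\otimes H$ for the coaction of an object $M$ of ${}_H\mathscr{C}^H$, $a'_M\colon M\otimes H\to M$ for its (right) $H$-action, and $S_H\colon H\to H$ for the antipode (not to be confused with the symmetric powers $S^n_H$). First I would record that $F:=(X\mapsto X\otimes H)$ is an additive functor $\mathscr{C}\to{}_H\mathscr{C}^H$: under the trivial structures on $X$, its value $X\otimes H$ carries the $H$-action $\mathrm{id}_X\otimes m_H$ and the $H$-coaction $\mathrm{id}_X\otimes\Delta_H$. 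Since $\mathscr{C}$ is abelian, the equalizer of $\rho_M$ and $\mathrm{id}_M\otimes u_H$ exists as the kernel of their difference, so I would define the candidate quasi-inverse $G\colon{}_H\mathscr{C}^H\to\mathscr{C}$ by
\[
G(M)=M^{\mathrm{co}H}:=\operatorname{Ker}\!\bigl(\rho_M-\mathrm{id}_M\otimes u_H\bigr),
\]
with structural inclusion $\iota_M\colon M^{\mathrm{co}H}\to M$.

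The isomorphism $GF\cong\mathrm{id}_{\mathscr{C}}$ is the easy half: applying $\varepsilon_H\otimes\mathrm{id}_H$ to the defining equation $\Delta_H\circ f=f\otimes u_H$ of a test morphism $f$ shows that $H^{\mathrm{co}H}=\mathbf{1}$ with inclusion $u_H$, and tensoring with $X$ gives $(X\otimes H)^{\mathrm{co}H}\cong X$, naturally in $X$. The content lies in $FG\cong\mathrm{id}$. For this I would take as comparison morphism the restricted action
\[
\theta_M:=a'_M\circ(\iota_M\otimes\mathrm{id}_H)\colon M^{\mathrm{co}H}\otimes H\longrightarrow M,
\]
which is $H$-linear by associativity of $a'_M$, and $H$-colinear because the Hopf-module compatibility, combined with the coinvariance of $\iota_M$, collapses $\rho_M\circ a'_M$ to $(a'_M\otimes\mathrm{id}_H)\circ(\mathrm{id}_{M^{\mathrm{co}H}}\otimes\Delta_H)$; hence $\theta_M$ is a morphism in ${}_H\mathscr{C}^H$. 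Its inverse is manufactured from the antipode: set
\[
\Pi_M:=a'_M\circ(\mathrm{id}_M\otimes S_H)\circ\rho_M\colon M\to M,
\]
verify that $\Pi_M$ factors through $\iota_M$ (i.e.\ lands in $M^{\mathrm{co}H}$), and define $\phi_M:=(\Pi_M\otimes\mathrm{id}_H)\circ\rho_M\colon M\to M^{\mathrm{co}H}\otimes H$.

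The two remaining identities $\theta_M\circ\phi_M=\mathrm{id}_M$ and $\phi_M\circ\theta_M=\mathrm{id}_{M^{\mathrm{co}H}\otimes H}$ are then direct computations: the first uses the antipode axiom $m_H\circ(S_H\otimes\mathrm{id}_H)\circ\Delta_H=u_H\circ\varepsilon_H$ followed by the counit property of $\rho_M$, while the second reduces, via the Hopf-module axiom and the coinvariance of $\iota_M$, to the identity $m_H\circ(\mathrm{id}_H\otimes S_H)\circ\Delta_H=u_H\circ\varepsilon_H$. I expect the main obstacle to be purely the categorical bookkeeping: each Sweedler-style manipulation must be realized as a composite of morphisms with the symmetry $c_{-,-}$ and the unit/associativity constraints inserted in the correct places, and the most delicate point is the verification that $\Pi_M$ is coinvariant, since it invokes the anti-comultiplicativity $\Delta_H\circ S_H=(S_H\otimes S_H)\circ c_{H,H}\circ\Delta_H$ on top of the Hopf-module compatibility. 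The cleanest route is to carry out all verifications in string-diagram form. Finally, because $F$ and $G$ are additive and mutually quasi-inverse, the equivalence is automatically exact, which gives the asserted equivalence of abelian categories; notably, the splitting/semisimplicity of $\mathscr{C}$ is not needed for this particular lemma.
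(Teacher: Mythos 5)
Your proof is correct and takes exactly the route the paper intends: the paper's own ``proof'' is a one-line citation that the lemma follows by transporting Sweedler's Hopf-module theorem \cite[Theorem 4.1.1]{Sw} from $\mathsf{Vec}$ into $\mathscr{C}$, and your coinvariants functor $M\mapsto M^{\mathrm{co}H}$ together with the antipode-built inverse $\phi_M$ is precisely that standard argument carried out in categorical form. Only your closing parenthetical is slightly overstated: exactness of $\otimes$, which the paper deduces from semisimplicity via assumptions (I)--(II), is still implicitly used --- e.g.\ to know that ${}_H\mathscr{C}^H$ and $\mathscr{C}^H$ are abelian in the first place, and when you commute $X\otimes-$ past the equalizer to get $(X\otimes H)^{\mathrm{co}H}\cong X$ --- so the lemma is not quite free of the standing hypotheses on $\mathscr{C}$.
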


This is a standard result, which can be proved essentially in the same way as proving the Hopf-module Theorem 
\cite[Theorem 4.1.1]{Sw} in $\mathsf{Vec}$. 

\begin{lemma}\label{Ladj}
$X\mapsto X\otimes H$ gives an additive functor $\mathscr{C}\to \mathscr{C}^H$ right adjoint to
the forgetful functor $\mathscr{C}^H\to \mathscr{C}$. 
\end{lemma}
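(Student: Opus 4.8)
The plan is to exhibit the adjunction directly, by writing down its unit and counit and checking the two triangle identities; neither the semisimplicity nor the splitting property of $\mathscr{C}$ is needed, and in fact only the coalgebra structure of $H$ enters. Write $U : \mathscr{C}^H \to \mathscr{C}$ for the forgetful functor, and let $R : \mathscr{C} \to \mathscr{C}^H$ denote the candidate right adjoint $R(X) = X \otimes H$, where $X \otimes H$ is given the coaction $\operatorname{id}_X \otimes \Delta_H : X \otimes H \to X \otimes H \otimes H$. That $R$ lands in $\mathscr{C}^H$ and is a functor follows from the coassociativity and counitality of $\Delta_H$, and its additivity on hom-groups (the map $f \mapsto f \otimes \operatorname{id}_H$) is immediate from the bi-additivity (II) of $\otimes$. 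For a comodule $M \in \mathscr{C}^H$ I will write $\rho_M : M \to M \otimes H$ for its coaction.

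Next I would define the counit and unit. For $X \in \mathscr{C}$ put $\varepsilon_X := \operatorname{id}_X \otimes \varepsilon_H : X \otimes H \to X \otimes \mathbf{1} = X$, a morphism of $\mathscr{C}$. For $M = (M, \rho_M) \in \mathscr{C}^H$ put $\eta_M := \rho_M : M \to M \otimes H = R(U(M))$; this is a morphism of $\mathscr{C}^H$ precisely because the coassociativity $(\operatorname{id}_M \otimes \Delta_H)\circ \rho_M = (\rho_M \otimes \operatorname{id}_H)\circ \rho_M$ asserts that $\rho_M$ intertwines the coaction of $M$ with the coaction $\operatorname{id}_M \otimes \Delta_H$ on $R(U(M))$. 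Naturality of $\varepsilon$ in $X$ and of $\eta$ in $M$ is routine from naturality of the unit constraints together with the $\mathscr{C}$- or $\mathscr{C}^H$-linearity of the morphisms involved.

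Finally I would verify the triangle identities. The identity $(\varepsilon_X \otimes \operatorname{id}_H) \circ \eta_{R(X)} = \operatorname{id}_{X \otimes H}$ (that is, $R(\varepsilon_X) \circ \eta_{R(X)} = \operatorname{id}$) reduces, since $\eta_{R(X)} = \operatorname{id}_X \otimes \Delta_H$, to
\[
(\operatorname{id}_X \otimes \varepsilon_H \otimes \operatorname{id}_H) \circ (\operatorname{id}_X \otimes \Delta_H) = \operatorname{id}_X \otimes \big((\varepsilon_H \otimes \operatorname{id}_H)\circ \Delta_H\big) = \operatorname{id}_{X \otimes H},
\]
which is the counit axiom for $H$. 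The identity $\varepsilon_{U(M)} \circ U(\eta_M) = \operatorname{id}_{U(M)}$ reads $(\operatorname{id}_M \otimes \varepsilon_H)\circ \rho_M = \operatorname{id}_M$, which is the counit axiom for the comodule $M$. This establishes $U \dashv R$ and proves the claim. I expect no genuine obstacle here, since this is the standard cofree-comodule construction; the only care required is the bookkeeping with the unit constraints $X \otimes \mathbf{1} = X$ and $M \otimes \mathbf{1} = M$, which by the convention of this section are suppressed as identity morphisms.
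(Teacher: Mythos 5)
Your proof is correct and takes essentially the same route as the paper: the paper establishes the adjunction by exhibiting the natural hom-set bijection $\mathscr{C}(N,\,X)\overset{\simeq}{\longrightarrow}\mathscr{C}^H(N,\,X\otimes H)$, $f\mapsto (f\otimes\operatorname{id}_H)\circ\rho_N$, with inverse given by composing with $\operatorname{id}_X\otimes\varepsilon_H$. Your unit $\eta_M=\rho_M$ and counit $\varepsilon_X=\operatorname{id}_X\otimes\varepsilon_H$ are exactly this same data repackaged in the triangle-identity formulation, and both arguments rest on the same ingredients (coassociativity for colinearity, and the two counit axioms).
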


This is also a standard result. Indeed, let $N=(N,~\rho_N)$ be an object of $\mathscr{C}^H$. To every
morphism $f : N \to X$ of $\mathscr{C}$, assign its $H$-colinear extension given by
\begin{equation}\label{Etilde}
\widetilde{f} : 
N \overset{\rho_N}{\longrightarrow} N \otimes H \overset{f \otimes \operatorname{id}_H}{\longrightarrow}
X \otimes H. 
\end{equation}
The assignment $f \mapsto \widetilde{f}$ gives a natural isomorphism 
\begin{equation}\label{ECNX}
\mathscr{C}(N,\ X) \overset{\simeq}{\longrightarrow} \mathscr{C}^H(N,\ X\otimes H)
\end{equation}
of abelian groups,
whose inverse assigns to every morphism $N\to X\otimes H$ in $\mathscr{C}^H$,
its composition with $\operatorname{id}_X \otimes \varepsilon_H : X \otimes H \to X \otimes\mathbf{1}
=X$. The result is restated by the lemma above. 

The corollary below now follows since $\mathscr{C}$ is split, whence every object of
$\mathscr{C}$ is injective. 

\begin{corollary}\label{Cinj}
Every object of ${}_H\mathscr{C}^H$, which is necessarily isomorphic to $X \otimes H$ (see 
\eqref{EXH}) for 
some object $X$ of $\mathscr{C}$, is injective, regarded as an object of $\mathscr{C}^H$. 
\end{corollary}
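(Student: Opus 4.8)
The plan is to deduce the statement from the general homological principle that the right adjoint of an exact functor between abelian categories preserves injective objects. By Lemma~\ref{LHopfm}, every object of ${}_H\mathscr{C}^H$ is isomorphic to one of the form $X\otimes H$ with $X\in\mathscr{C}$ (see \eqref{EXH}), so it suffices to show that each such $X\otimes H$ is injective in $\mathscr{C}^H$. By Lemma~\ref{Ladj}, the functor $G:\mathscr{C}\to\mathscr{C}^H$, $X\mapsto X\otimes H$, is right adjoint to the forgetful functor $U:\mathscr{C}^H\to\mathscr{C}$, the adjunction being the natural isomorphism \eqref{ECNX}.

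First I would record the two hypotheses of the principle. The forgetful functor $U$ is exact: as noted before Lemma~\ref{LHopfm}, a sequence in $\mathscr{C}^H$ is exact precisely when its image under $U$ is exact in $\mathscr{C}$, by the splitting property of $\mathscr{C}$ together with the exactness of $\otimes H$. Moreover, since $\mathscr{C}$ is semisimple abelian, hence split, every short exact sequence in $\mathscr{C}$ splits, so every object $X$ of $\mathscr{C}$ is injective. This last fact is exactly what the sentence preceding the corollary records.

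The main step is then a short diagram chase. Given a monomorphism $m:N'\to N$ in $\mathscr{C}^H$ and a morphism $g:N'\to X\otimes H$, one transports $g$ across the adjunction \eqref{ECNX} to a morphism $U(N')\to X$ in $\mathscr{C}$. Since $U$ is exact, $U(m):U(N')\to U(N)$ is a monomorphism; since $X$ is injective, this morphism extends along $U(m)$ to some $U(N)\to X$. Transporting back across \eqref{ECNX} yields a morphism $N\to X\otimes H$, and the naturality of \eqref{ECNX} in the first variable guarantees that it restricts along $m$ to the given $g$. This produces the desired extension and shows $X\otimes H$ is injective in $\mathscr{C}^H$.

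There is no genuine obstacle here: the argument is purely formal once the exactness of $U$ and the injectivity of the objects of $\mathscr{C}$ are in hand, both of which follow from the splitting property already in force. The only points requiring minor care are the direction of the adjunction---it is the forgetful functor $U$, not $G$, that must be exact---and the naturality of \eqref{ECNX}, which is what lets one conclude that the constructed extension genuinely restricts to the given morphism rather than merely existing.
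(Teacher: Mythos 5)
Your proposal is correct and follows essentially the same route as the paper: the paper's one-sentence proof simply invokes the adjunction of Lemma~\ref{Ladj} together with the injectivity of every object of $\mathscr{C}$ (from the splitting property), which is exactly the standard ``right adjoint of an exact functor preserves injectives'' principle you spell out. Your explicit diagram chase and the remarks on the exactness of the forgetful functor and on naturality of \eqref{ECNX} merely make the paper's compressed argument fully detailed.
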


Fix an object $X$ of $\mathscr{C}$, which gives rise to
the object $X \otimes H$ of ${}_H\mathscr{C}^H$ as in \eqref{EXH}.

\begin{prop}\label{PHochExt}
We have the following. 
\begin{itemize}
\item[(1)]
Every Hochschild extension
\[
\langle E\rangle : 0 \to X \otimes H\overset{i}{\longrightarrow} 
E \overset{p}{\longrightarrow} H\to 0
\]
of $H$ by $X \otimes H$ in $\mathscr{C}^H$ splits in $\mathscr{C}^H$.
\item[(2)]
The set 
\[
\operatorname{Ext}(H,\ X\otimes H)_{\mathscr{C}^H}
\]
of all equivalence classes
of the Hochschild extensions of $H$ by $X\otimes H$ in $\mathscr{C}^H$
is in a natural one-to-one correspondence with the symmetric 2nd Hochschild cohomology
\[
H^2_s(H,\ X\otimes H)_{\mathscr{C}^H}
\]
constructed in $\mathscr{C}^H$. The set is also in one-to-one correspondence, 
through $\langle E\rangle \mapsto (E,~p)$, with
the set of all isomorphism classes of the square-zero extensions of $H$ in $\mathscr{C}^H$
with kernel isomorphic to $X\otimes H$. 
\end{itemize}
\end{prop}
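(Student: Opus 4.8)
The plan is to get Part (1) almost for free from Corollary \ref{Cinj}, and then to deduce Part (2) by transporting, into the abelian symmetric monoidal category $\mathscr{C}^H$, the cocycle description of Hochschild extensions that was set up for $\mathscr{C}$ in \eqref{EEMA}--\eqref{Eum}.

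For Part (1), I would note that the kernel $X\otimes H$ of $\langle E\rangle$ is an object of ${}_H\mathscr{C}^H$, hence injective in $\mathscr{C}^H$ by Corollary \ref{Cinj}. Applying injectivity to the monomorphism $i : X\otimes H \to E$ together with the identity $\operatorname{id}_{X\otimes H}$ produces a morphism $r : E \to X\otimes H$ in $\mathscr{C}^H$ with $r\circ i = \operatorname{id}_{X\otimes H}$. Thus $i$ has a retraction in $\mathscr{C}^H$, so the sequence splits there; equivalently $p$ admits an $H$-colinear section $j : H\to E$.

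For Part (2), the crucial observation is that Part (1) supplies precisely what the $\mathscr{C}$-construction demanded of the splitting property of $\mathscr{C}$: an object-level splitting of the extension, now inside $\mathscr{C}^H$. Having fixed an $H$-colinear section $j$, I would identify $E = (X\otimes H)\oplus H$ in $\mathscr{C}^H$ as in \eqref{EEMA} and read off, exactly as in \eqref{Eum}, a symmetric Hochschild $2$-cocycle $s : H\otimes H \to X\otimes H$ that determines the algebra structure of $E$. Because $j$, $p$ and the products occurring are all morphisms of $\mathscr{C}^H$, the cocycle $s$ is itself a morphism of $\mathscr{C}^H$ and so represents a class in $H^2_s(H, X\otimes H)_{\mathscr{C}^H}$. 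The remaining verifications are then the standard ones, carried out verbatim in $\mathscr{C}^H$: the cocycle identity is equivalent to the associativity and commutativity of $m_E$; replacing $j$ by another $H$-colinear section changes $s$ by a coboundary; and two Hochschild extensions are equivalent exactly when the associated cocycles are cohomologous. This gives the bijection with $H^2_s(H, X\otimes H)_{\mathscr{C}^H}$, and the second bijection through $\langle E\rangle\mapsto (E,p)$ follows as in the $\mathscr{C}$-case by forgetting the chosen $A$-linear identification of the kernel.

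The step I expect to carry the real weight is Part (1). In general $\mathscr{C}^H$ is not split --- comodule categories over a Hopf algebra need not be semisimple --- so the cocycle classification valid in $\mathscr{C}$ does not transfer to $\mathscr{C}^H$ automatically. It is exactly the injectivity of the Hopf module $X\otimes H$ in $\mathscr{C}^H$ recorded in Corollary \ref{Cinj} that restores a splitting for the particular extensions under consideration. Once that splitting is in hand, Part (2) is formal and requires no calculation beyond the one already performed for $\mathscr{C}$.
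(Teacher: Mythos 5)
Your proposal is correct and follows essentially the same route as the paper: Part (1) is obtained exactly as the paper does, from the injectivity of the Hopf module $X\otimes H$ in $\mathscr{C}^H$ recorded in Corollary \ref{Cinj} (the retraction argument you spell out is what ``direct consequence'' means there), and Part (2) is the paper's argument of transporting the description \eqref{EEMA}--\eqref{Eum} into $\mathscr{C}^H$ via the splitting and invoking the standard cocycle--coboundary dictionary. Your closing remark correctly identifies why Part (1) carries the weight, namely that $\mathscr{C}^H$ itself need not be split, which is precisely the role Corollary \ref{Cinj} plays in the paper.
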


\begin{proof}
(1)\ 
This is a direct consequence of Corollary \ref{Cinj}.

(2)\ 
By Part 1 every Hochschild extensions of $H$ by $X\otimes H$ splits as a short exact sequence in $\mathscr{C}^H$, whence it can be described, just as the $E$ in \eqref{EEMA} is done by \eqref{Eum}, by a symmetric Hochschild $2$-cocycle $H\otimes H\to X\otimes H$; see \eqref{Eumtilde} below. 
The standard argument which connects the equivalence of extensions 
with the cohomologous relation proves the first assertion. 
The second is easy to see. 
\end{proof}

\begin{prop}\label{PH2}
We have a natural isomorphism 
\[
H^2_s(H,\ X)_{\mathscr{C}}\overset{\simeq}{\longrightarrow} H^2_s(H,\ X\otimes H)_{\mathscr{C}^H}
\]
of abelian groups between 
the symmetric 2nd Hochschild cohomologies. 
\end{prop}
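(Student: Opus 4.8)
The plan is to exhibit an isomorphism between the two symmetric Hochschild cochain complexes computing the cohomologies, induced by the adjunction of Lemma \ref{Ladj}, and to read off the asserted isomorphism in degree $2$. First I would write out the two complexes. The left-hand side is computed from the symmetric Hochschild complex of the algebra $H$ in $\mathscr{C}$ with coefficients in the trivial $H$-module $X$, whose $n$-cochains are the morphisms $H^{\otimes n}\to X$ in $\mathscr{C}$ that are invariant under the symmetric-group action built from $c$. The right-hand side is computed from the symmetric Hochschild complex of the $H$-comodule algebra $H$ in $\mathscr{C}^H$ with coefficients in the $H$-Hopf module $X\otimes H$, whose $n$-cochains are the symmetric morphisms $H^{\otimes n}\to X\otimes H$ in $\mathscr{C}^H$; here $H^{\otimes n}$ carries the tensor-product comodule structure. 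Taking $N=H^{\otimes n}$ in the adjunction isomorphism \eqref{ECNX} gives, for each $n$, a bijection
\[
\Phi_n : \mathscr{C}(H^{\otimes n},\ X) \overset{\simeq}{\longrightarrow} \mathscr{C}^H(H^{\otimes n},\ X\otimes H),\qquad f\mapsto \widetilde{f},
\]
matching the underlying cochain objects of the two complexes, and natural in $X$ by the naturality of \eqref{ECNX}.

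Next I would verify that $\{\Phi_n\}_n$ is a morphism of complexes, i.e. that it commutes with the Hochschild differentials. The inner face terms of the differential are precomposition with the multiplication-induced maps $H^{\otimes(n+1)}\to H^{\otimes n}$; these are morphisms of $\mathscr{C}^H$, and since the operation $\widetilde{(-)}$ satisfies $\widetilde{f\circ g}=\widetilde{f}\circ g$ for every colinear $g$ (an immediate consequence of the colinearity of $g$ and the formula \eqref{Etilde}), these terms are automatically compatible with $\Phi$. The one place where the Hopf-algebra axioms really enter is the compatibility of the two outer terms, which involve the coefficient action: on the left the trivial action $\varepsilon_H\otimes\operatorname{id}_X$ on $X$, and on the right the Hopf-module action on $X\otimes H$ given by multiplication in the $H$-factor. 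I expect this to be the main obstacle, but it is a direct computation: unwinding the tensor-product comodule structure of $H^{\otimes(n+1)}$ and applying coassociativity together with the counit axiom shows that $\Phi_{n+1}$ carries the outer trivial-action terms of $\delta f$ exactly to the outer multiplication-action terms of $\delta\widetilde{f}$. In the case $\mathscr{C}=\mathsf{Vec}$ one checks at once, in Sweedler notation, that for $f$ of degree $1$ both $\widetilde{\delta f}$ and $\delta\widetilde{f}$ send $a\otimes b$ to $f(b_{(1)})\otimes ab_{(2)} - f(a_{(1)}b_{(1)})\otimes a_{(2)}b_{(2)} + f(a_{(1)})\otimes a_{(2)}b$, and higher degrees are identical bookkeeping; in particular no antipode is needed.

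Finally I would check that $\Phi_n$ respects the symmetry condition, so that it restricts to an isomorphism of the symmetric subcomplexes. Since the symmetry morphisms permuting the tensor factors of $H^{\otimes n}$ are morphisms of $\mathscr{C}^H$, and $\widetilde{(-)}$ commutes with precomposition by colinear maps as above, a cochain $f$ is symmetric if and only if $\widetilde{f}$ is. Passing to cohomology in degree $2$ and invoking naturality in $X$ then yields the desired natural isomorphism. As an alternative one could instead match the two sides through their classifications by extensions---$H^2_s(H,X)_{\mathscr{C}}$ by Hochschild extensions of $H$ by $X$ in $\mathscr{C}$, and $H^2_s(H,X\otimes H)_{\mathscr{C}^H}$ by those in $\mathscr{C}^H$ via Proposition \ref{PHochExt}---but the cochain-level argument is more transparent and delivers naturality for free.
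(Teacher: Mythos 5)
Your proposal is correct and follows essentially the same route as the paper: both identify the cochain groups of the two symmetric Hochschild complexes via the adjunction isomorphism \eqref{ECNX} of Lemma \ref{Ladj} and then verify that these identifications commute with the differentials, so that the isomorphism in degree $2$ (natural in $X$ by naturality of \eqref{ECNX}) follows. The only cosmetic difference is that the paper works with normalized cochains on tensor powers of $H^+$ and packages the compatibility check by observing that the face operators of the bar construction are morphisms of $(H,H)$-bimodules in $\mathscr{C}^H$, whereas you use unnormalized cochains and split the check into inner faces (via $\widetilde{f\circ g}=\widetilde{f}\circ g$ for colinear $g$) and outer faces (via the counit axiom), which amounts to the same computation.
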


Before proving this we make some remarks. First, notice $\operatorname{Im}(u_H)\oplus H^+=H$, where 
$H^+=\operatorname{Ker}(\varepsilon_H : H \to \mathbf{1})$
denotes the augmentation ideal of $H$. Therefore, we have the canonical identification
\[
H^+= \operatorname{Coker}(u_H : \mathbf{1}\to H).
\]
Notice that $\operatorname{Coker}(u_H)$ is a quotient
object of $H$ in $\mathscr{C}^H$. We regard $H^+$ as 
an object of $\mathscr{C}^H$ through the identification above.

Second, given a Hochschild extension $\langle E\rangle$ of $H$
by $X$ (resp., by $X \otimes H$) in $\mathscr{C}$ 
(resp., in $\mathscr{C}^H$), 
we can and we do choose 
a section $j$ of $E \to H$ so as to be unit-preserving,  $j\circ u_H=u_E$, by replacing it with $j + (u_E-j\circ u_H)\circ \varepsilon_H$. Therefore, we have $u_E=(0, u_H)$
in the associated coproduct $E=X\oplus H$ (resp., $E=(X\otimes H)\oplus H$) such as the one in \eqref{EEMA}; 
cf. \eqref{Eum}. 
Accordingly, we suppose that the
cochains for computing the relevant symmetric 2nd Hochschild cohomologies are \emph{normalized}; they thus have
some tensor powers of $H^+$ as their domains. 
In addition, for the symmetry condition of the $2$-cochains we use
the 2nd symmetric power
$S^2(H^+)$
of $H^+\, (=\operatorname{Coker}(u_{H}))$ in $\mathscr{C}$ (resp., in $\mathscr{C}^H$). 
Explicitly, the 2nd cohomologies in question are obtained as the cohomologies at the middle terms
of the 
horizontal (so-called normalized Hochschild) complexes below. 
\begin{equation}\label{Ecomp}
\begin{xy}
(0,8)   *++{\mathscr{C}(H^+,\ X)}  ="1",
(39,8)  *++{\mathscr{C}(S^2(H^+),\ X)} ="2",
(80,8) *++{\mathscr{C}((H^+)^{\otimes 3},\ X)} ="3",
(0,-8) *++{\mathscr{C}^H(H^+,\ X\otimes H)} ="4",
(39,-8) *++{\mathscr{C}^H(S^2(H^+),\ X\otimes H)} ="5",
(80,-8) *++{\mathscr{C}^H((H^+)^{\otimes 3},\ X\otimes H)} ="6",
{"1" \SelectTips{cm}{} \ar @{->}^{\delta^1} "2"},
{"2" \SelectTips{cm}{} \ar @{->}^{\delta^2} "3"},
{"1" \SelectTips{cm}{} \ar @{->} "4"},
{"2" \SelectTips{cm}{} \ar @{->} "5"},
{"3" \SelectTips{cm}{} \ar @{->} "6"},
{"4" \SelectTips{cm}{} \ar @{->} "5"},
{"5" \SelectTips{cm}{} \ar @{->} "6"}.
\end{xy}
\end{equation}
Here the vertical arrows indicate the natural isomorphisms such as in \eqref{ECNX}. 

\begin{proof}[Proof of Proposition \ref{PH2}]
It remains to prove commutativity of the diagram above. But this can be verified
directly. Indeed, one may replace the $S^2(H^+)$ at the middle terms with $(H^+)^{\otimes 2}$,
so that the resulting complexes are semi-cosimplicial.
One then has only to verify the commutativity for the relevant coface operators, not for
the coboundary operators such as $\delta^i$ above. Let $i\in \{1,2\}$. 
A point of the verification is to use the fact that
the face operators 
\[
d_q : H\otimes (H^+)^{\otimes (i+1)}\otimes H \to H\otimes (H^+)^{\otimes i}\otimes H,\quad 0\le q \le i+1
\]
which induce the coface operators (of the two complexes) 
are morphisms in the category ${}_H(\mathscr{C}^H)_H$ of $(H, H)$-bimodules in
$\mathscr{C}^H$. 
\end{proof}

\begin{rem}\label{R1to1}
By Lemma \ref{LHopfm} and Propositions \ref{PHochExt} and \ref{PH2} we have
a natural one-to-one correspondence between the following two sets:
\begin{itemize}
\item all isomorphism classes of the square-zero extensions $(E,p)$ of $H$ in $\mathscr{C}$
such that the kernel $\operatorname{Ker}p$ of $p : E \to H$ is a trivial $H$-module;
\item all isomorphism classes of the square-zero extensions of $H$ in $\mathscr{C}^H$.
\end{itemize}
To present the correspondence explicitly, suppose that we are given $(E,p)$ from the first set.
We may suppose $E=X\oplus H$ with $X$ a trivial $H$-module,
and $p : X\oplus H \to H$ is the projection. In addition, 
the product $m_E$ of $E=X\oplus H$ is such as in \eqref{Eum}
with obvious modifications, where in particular, the symmetric Hochschild 2-cocycle 
$s$ is now supposed to be normalized, and thus the unit is $u_E=(0, u_H)$. 
The corresponding square-zero extension of $H$ in $\mathscr{C}^H$ is $\widetilde{E}=\widetilde{X}\oplus H$ 
together with the projection to $H$, where $\widetilde{X}=X\otimes H$ as in \eqref{EXH}, and 
the structure of $\widetilde{E}$ is given by
\begin{equation}\label{Eumtilde}
u_{\widetilde{E}}=(0,\ u_H),\quad 
m_{\widetilde{E}}=\begin{cases} (\widetilde{s},~m_H) &\text{on}\ H\otimes H;\\
((\mathrm{id}_X\otimes m_H)\circ(c_{H,X}\otimes \mathrm{id}_H),~0) &\text{on}\ H\otimes \widetilde{X};\\
(\mathrm{id}_X\otimes m_H,~0) &\text{on}\ \widetilde{X}\otimes H;\\
\ 0 &\text{on}\ \widetilde{X}\otimes \widetilde{X},
\end{cases}
\end{equation}
where $\widetilde{s} : H \otimes H \to \widetilde{X}=X\otimes H$ denotes the $H$-colinear extension
of $s$; see \eqref{Etilde}. The original $(E, p)$ is recovered from $\widetilde{E}$ by dividing it by the ideal
$X\otimes H^+\, (\subset \widetilde{X})$. 
\end{rem}

We wish to present $H^2_s(H,\ X)_{\mathscr{C}}$ in a simple form. 
Let $m_{H^+} : H^+ \otimes H^+ \to H^+$ denote the restricted product on $H^+$. Let
\[
\delta_1 : S^2(H^+)\to H^+
\]
denote the morphism of $\mathscr{C}$ induced by $m_{H^+}$, and let
\[
\delta_2 :  (H^+)^{\otimes 3} \to (H^+)^{\otimes 2} \to S^2(H^+)
\]
denote the composition of  $\operatorname{id}_{H^+}\otimes m_{H^+}- m_{H^+}\otimes\operatorname{id}_{H^+}$
with the canonical $(H^+)^{\otimes 2}\to S^2(H^+)$. Regard $H^+$ naturally as an object of the symmetric monoidal 
category ${}_H\mathscr{C}$ of $H$-modules in $\mathscr{C}$,
whose tensor product and unit object are now supposed to be $\otimes_H$ and $H$, respectively, 
and construct the
2nd symmetric power 
$S_H^2(H^+)$
of it; this, constructed in ${}_H\mathscr{C}$, should be distinguished from the one $S^2(H^+)$
constructed in $\mathscr{C}$, before. 
Notice that this is, in fact, the cokernel $\operatorname{Coker}(\delta_2)$ of $\delta_2$. 
Let
\[
\mu_H : S_H^2(H^+) \to H^+
\]
denote the morphism of $\mathscr{C}$ induced by $\delta_1$, and thus by $m_{H^+}$, too.  

\begin{prop}\label{PH2C}
We have the following.
\begin{itemize}
\item[(1)]
The coboundary operators $\delta^i$ of the upper horizontal complex in \eqref{Ecomp} for
computing $H^2_s(H,\ X)_{\mathscr{C}}$ are induced by $\delta_i$ so that
\[
\delta^i=\mathscr{C}(\delta_i,\ X),
\]
where $i=1, 2$.
\item[(2)]
We have an isomorphism
\begin{equation}\label{EH2Ker}
H^2_s(H,\ X)_{\mathscr{C}}\simeq \mathscr{C}(\operatorname{Ker}(\mu_H),\ X)
\end{equation}
of abelian groups, which is natural in $X$.
\end{itemize}
\end{prop}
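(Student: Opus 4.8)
The plan is to establish Part (1) by directly unwinding the normalized symmetric Hochschild differential with trivial coefficients, and then to deduce Part (2) by recognizing $\operatorname{Ker}(\mu_H)$ as the homology, at its middle term, of the three-term complex built from $\delta_2$ and $\delta_1$, and transporting it across the contravariant functor $\mathscr{C}(-,X)$, which is exact by the splitting property of $\mathscr{C}$ (every object being injective).

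For Part (1), recall that the differential of the normalized complex computing $H^2_s(H,X)_{\mathscr{C}}$ is assembled from the coface operators induced by the face operators $d_q$, together with the left and right $H$-actions on the coefficient module $X$. Since $X$ is a \emph{trivial} $H$-module, $H$ acts through $\varepsilon_H$; and as every tensor factor in the domains $H^+,\ S^2(H^+),\ (H^+)^{\otimes 3}$ lies in $H^+=\operatorname{Ker}\varepsilon_H$, the two outer terms $d_0$ and $d_{i+1}$ of the degree-$i$ coboundary ($i=1,2$) vanish. What survives is governed entirely by the inner faces, which are built from $m_{H^+}$. Concretely, for a normalized $1$-cochain $f\colon H^+\to X$ the coboundary reduces to $\pm f\circ m_{H^+}$; since $H$ is commutative, $m_{H^+}$ factors through $S^2(H^+)$ as $\delta_1$, so $\delta^1=\mathscr{C}(\delta_1,X)$. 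For a symmetric $2$-cochain $g\colon S^2(H^+)\to X$ the coboundary reduces to $g$ precomposed with $\operatorname{id}_{H^+}\otimes m_{H^+}-m_{H^+}\otimes\operatorname{id}_{H^+}$ and the canonical projection to $S^2(H^+)$, i.e. $g\circ\delta_2=\mathscr{C}(\delta_2,X)(g)$, giving $\delta^2=\mathscr{C}(\delta_2,X)$.

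For Part (2), I first observe that $\delta_1\circ\delta_2=0$: since $\delta_1$ is the descent of $m_{H^+}$ through $S^2(H^+)$, one has $\delta_1\circ\delta_2=m_{H^+}\circ(\operatorname{id}_{H^+}\otimes m_{H^+}-m_{H^+}\otimes\operatorname{id}_{H^+})$, which vanishes by associativity. Thus $(H^+)^{\otimes 3}\xrightarrow{\delta_2}S^2(H^+)\xrightarrow{\delta_1}H^+$ is a complex, $S_H^2(H^+)=\operatorname{Coker}(\delta_2)$ by definition, and $\mu_H$ is the morphism induced by $\delta_1$. Writing $q\colon S^2(H^+)\to S_H^2(H^+)$ for the projection, we have $\mu_H\circ q=\delta_1$, so pulling back along the epimorphism $q$ gives $q^{-1}(\operatorname{Ker}\mu_H)=\operatorname{Ker}\delta_1$; as $\operatorname{Ker}q=\operatorname{Im}\delta_2\subseteq\operatorname{Ker}\delta_1$, this identifies $\operatorname{Ker}(\mu_H)$ with the homology $\operatorname{Ker}(\delta_1)/\operatorname{Im}(\delta_2)$ at the middle of the complex.

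Finally, by Part (1) the upper horizontal complex of \eqref{Ecomp} is obtained by applying $\mathscr{C}(-,X)$ to the complex above, and $H^2_s(H,X)_{\mathscr{C}}$ is its cohomology at the middle. Because $\mathscr{C}$ is split, every object is injective, so $\mathscr{C}(-,X)$ is exact; choosing splittings $\operatorname{Ker}\delta_1=\operatorname{Im}\delta_2\oplus K$ with $K\cong\operatorname{Ker}(\mu_H)$, and $S^2(H^+)=\operatorname{Ker}\delta_1\oplus L$ together with a splitting of $\delta_1$, the complex decomposes into a contractible part plus the homology $K$. The exact contravariant functor $\mathscr{C}(-,X)$ then converts this middle homology into the cohomology at the middle of the dualized complex, yielding the natural isomorphism $H^2_s(H,X)_{\mathscr{C}}\simeq\mathscr{C}(\operatorname{Ker}(\mu_H),X)$; naturality in $X$ is immediate, every step being functorial in $X$. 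The main obstacle is the bookkeeping of Part (1): one must check carefully that the normalization and symmetry conventions fixing the cochain groups are matched exactly by the factorizations of $m_{H^+}$ through the symmetric powers, and that the vanishing of the outer coface terms is correctly justified by the triviality of the $H$-action together with $H^+\subseteq\operatorname{Ker}\varepsilon_H$.
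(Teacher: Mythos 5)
Your proof is correct and takes essentially the same approach as the paper: Part (1) by direct computation using the triviality of the $H$-action on $X$ (so the outer coface terms vanish and only the multiplication faces survive), and Part (2) by using the splitting property of $\mathscr{C}$ to carry the middle homology of the complex $(H^+)^{\otimes 3}\xrightarrow{\delta_2}S^2(H^+)\xrightarrow{\delta_1}H^+$ across the exact functor $\mathscr{C}(-,X)$. The paper organizes Part (2) slightly differently --- first identifying the $2$-cocycles with $\mathscr{C}(S_H^2(H^+),X)=\mathscr{C}(\operatorname{Coker}(\delta_2),X)$ and then quotienting by the image of $\mathscr{C}(\mu_H,X)$ --- but this is the same computation your homology argument performs.
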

\begin{proof}
Part 1 is easily verified by using the fact that $X$ is trivial. 
The result for $\delta^2$,
combined with the splitting property of $\mathscr{C}$, shows that
the object of $2$-cocycles is naturally isomorphic to $\mathscr{C}(S_H^2(H^+),\ X)$. 
This, combined with the result for $\delta^1$ in Part 1, implies Part 2. 
\end{proof}

Recall from the Introduction the conditions (a)--(d) for Hopf algebras in $\mathsf{Vec}$, ignoring (e) and (f). 
The conditions (a)--(d) make sense for Hopf algebras in $\mathscr{C}$, more generally. 
Indeed, the definition of (equivariant) smoothness is directly generalized. 
The morphism $\mu_H$ is already given in the generalized situation. 
For (c) some comments should be made. Due to the properties (I)--(II) of $\mathscr{C}$, given a Hopf algebra
$H$ in $\mathscr{C}$, a Hopf ideal of $H$ and thus a quotient Hopf algebra of $H$ are defined in an 
obvious manner. 
Suppose that $Q$ is a quotient Hopf algebra of $H$. Then $H$ is naturally regarded as a $Q$-comodule
on both sides. 
Given a right $Q$-comodule $V=(V, \rho_V)$ and a left $Q$-comodule $W=(W,\lambda_W)$ in $\mathscr{C}$, 
the co-tensor product $V\square_Q W$ is defined by the following equalizer diagram
\begin{equation}\label{Ecotens}
0\to V\, \square_Q W\to V\otimes W \rightrightarrows (V \otimes Q)\otimes W\, (= V \otimes (Q\otimes W))
\end{equation}
in $\mathscr{C}$, where the paired arrows indicate $\rho_V\otimes \mathrm{id}_W$ and $\mathrm{id}_V\otimes \lambda_W$. Indeed, this defines a functor, $V\, \square_Q : {}^Q\mathscr{C}\to\mathscr{C}$ (resp.,
$\square_QW : \mathscr{C}^Q \to\mathscr{C}$), which is seen to be left exact. 
If it is exact, we say that $V$ (resp., $W$) is \emph{coflat}. 
A right $Q$-comodule is regarded, with the coaction twisted by
the antipode of $Q$, as a left $Q$-comodule, and vice versa. 
The thus twisted left $Q$-comodule $H$ is isomorphic to the natural 
left $Q$-comodule $H$ through the antipode $\mathcal{S}_H$ of $H$.
Therefore, the co-tensor-product functors $H\, \square_Q$ and 
$\square_QH$ are identified through the natural isomorphisms induced 
from $(\mathrm{id}_W\otimes \mathcal{S}_H)\circ c_{H,W} : H \otimes W
\overset{\simeq}{\longrightarrow} W \otimes H$. 
As a conclusion, $H$ is coflat as a right $Q$-comodule
if and only if it is so as a left $Q$-comodule. 

\begin{rem}\label{Rcoflatinj}
The above definitions of the co-tensor product and the coflatness (for $Q$) are valid for any coalgebra
in $\mathscr{C}$. Let $C$ be such a coalgebra. A $C$-comodule $V=(V,\rho_V)$, say on the right, is coflat if it is injective,
as is seen from the canonical isomorphism $(V\otimes C)\square_CW=V\otimes W$, where 
$W$ is a left $C$-comodule, and the fact that
$\rho_V$ splits $C$-colinearly provided $V$ is injective. The converse holds when $\mathscr{C}=\mathsf{Vec}$,\
$\mathsf{sVec}$ or $\mathsf{Ver}_p^{\mathrm{ind}}$; see \eqref{Ecat}. 
Indeed, the proof of the well-known result \cite[Proposition A.2.1]{Tak1} in the case where $\mathscr{C}=\mathsf{Vec}$ works in the other cases, as well. 
For the following properties, which are essential to the cited proof, are verified 
in those cases: (i)~every $C$-comodule is ind-finite (for so is $C$), and (ii)~we have a
natural isomorphism 
\[
\mathscr{C}^C(V,W)\simeq V \square_CW^*,
\]
where $W$ is supposed to have finite length, so that the dual object $W^*$ is naturally a left $C$-comodule.
\end{rem}

We make some additional remarks before the proof of Theorem \ref{T2nd}. 
The category of algebras in $\mathscr{C}$ has pull-backs. Indeed, the 
pull-back $L\times_SR$ of two algebra morphisms $L \to S \leftarrow R$ in $\mathscr{C}$ is, as an object
of $\mathscr{C}$, 
the pull-back of those regarded as morphisms of $\mathscr{C}$,
and is equipped with the component-wise algebra structure. Given an algebra $R$ in $\mathscr{C}$ 
with a square-zero ideal $I$ and an algebra morphism $f : H \to R/I$, the pull-back 
$T:=H\times_{R/I}R$ actually constitutes the diagram
\begin{equation}\label{Epb}
\begin{xy}
(0,6)   *++{T}  ="1",
(14,6)  *++{R} ="2",
(0,-7) *++{H} ="3",
(14,-7) *++{R/I.} ="4",
{"1" \SelectTips{cm}{} \ar @{->}^{\omega} "2"},
{"1" \SelectTips{cm}{} \ar @{->}_{\varpi} "3"},
{"2" \SelectTips{cm}{} \ar @{->} "4"},
{"3" \SelectTips{cm}{} \ar @{->}_f "4"}
\end{xy}
\end{equation}
One sees that $(T, \varpi)$ is a square-zero extension of $H$ in $\mathscr{C}$.
If $\varpi$ has a section, then it, composed with $\omega$,
gives a lift of $f$. This familiar argument shows that Condition (a) is equivalent to saying that for every square-zero extension 
$(E,p)$ of $H$ in $\mathscr{C}$, 
$p$ splits as algebra morphism in $\mathscr{C}$; see \cite[Section 9.3.2]{Wei}, for example. 
This in turn is equivalent to
\begin{itemize}
\item[(a$'$)] $H_s^2(H, M)_{\mathscr{C}}=0$ for every $H$-module $M$ in $\mathscr{C}$. 
\end{itemize}
See the paragraph containing \eqref{EH2AM}. 

Similarly, the category of algebras in $\mathscr{C}^H$ has pull-backs.
The argument above modified into $\mathscr{C}^H$, combined with 
Propositions \ref{PHochExt} and \ref{PH2}, shows that Condition (b)
is equivalent to
\begin{itemize}
\item[(b$'$)] $H_s^2(H, X)_{\mathscr{C}}=0$ for every object $X$ of $\mathscr{C}$, regarded as a trivial $H$-module.
\end{itemize}

\begin{proof}[Proof of Theorem \ref{T2nd}]
(c)$\Rightarrow$(a).\ 
This follows, since $H/H^+\, (=\mathbf{1})$ is a quotient Hopf algebra of $H$, over which $H$ is obviously coflat, 
and being smooth is the same as being $H/H^+$-smooth. 

(a)$\Rightarrow$(d).\ 
By Proposition \ref{PH2C} (2), Condition (d) is equivalent to (b$'$), which is satisfied under (a$'$),
or equivalently, under (a). 

(d)$\Rightarrow$(b).\
In fact, the conditions are equivalent, since they are both equivalent to (b$'$).

(b)$\Rightarrow$(c).\ 
This follows by applying the next proposition for $H$-comodule algebras in $\mathscr{C}$,
to $H$, in particular. 
\end{proof}

\begin{prop}[\text{see \cite[Proposition 3.19]{MOT}}]\label{PMOTeq}
Let $\mathscr{C}$ be as above, and let $H$ be a Hopf algebra in $\mathscr{C}$. 
If $Q$ is a quotient Hopf algebra of $H$ such that
$H$ is coflat as a right (or equivalently, left) $Q$-comodule, then every $H$-smooth 
$H$-comodule algebra in $\mathscr{C}$ is necessarily $Q$-smooth. 
\end{prop}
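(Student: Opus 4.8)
The plan is to exploit the adjunction between the corestriction (pushforward) functor $\mathscr{C}^H \to \mathscr{C}^Q$ along the projection $\pi : H \to Q$ and the coinduction functor $W \mapsto W \square_Q H$, and to use coflatness of $H$ to transport a $Q$-equivariant lifting problem to an $H$-equivariant one and back. Throughout, $A$ is an $H$-comodule algebra, and I regard it as a $Q$-comodule algebra $A_Q$ by composing its coaction $\rho_A$ with $\mathrm{id}_A \otimes \pi$; this $A_Q$ is what ``$A$ is $Q$-smooth'' refers to.

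First I would record that coinduction preserves comodule algebras. Here $H$ carries its left $Q$-coaction $\lambda_H = (\pi \otimes \mathrm{id}_H)\Delta_H$ and its right $H$-coaction $\Delta_H$, making it a $(Q,H)$-bicomodule algebra. For a right $Q$-comodule algebra $W$, the cotensor product $W \square_Q H$ is the equalizer of $\rho_W \otimes \mathrm{id}_H$ and $\mathrm{id}_W \otimes \lambda_H$ inside $W \otimes H$. Since $\rho_W$, $\lambda_H$ (and $\Delta_H$) are algebra morphisms, both paired arrows are algebra morphisms, so $W \square_Q H$ is a subalgebra of $W \otimes H$ and, via the right $H$-coaction on $H$, an $H$-comodule algebra. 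This produces a functor from $Q$-comodule algebras to $H$-comodule algebras.

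Now given a $Q$-comodule algebra $R$, a nilpotent $Q$-costable ideal $I$, and a $Q$-colinear algebra map $f : A_Q \to R/I$, I would apply $-\,\square_Q H$. Coflatness of $H$ as a left $Q$-comodule makes this functor exact, so
\[
0 \to I \square_Q H \to R \square_Q H \to (R/I)\square_Q H \to 0
\]
is an exact sequence of $H$-comodule algebras; here $I \square_Q H$ is an $H$-costable ideal of $R \square_Q H$, being the intersection of the ideal $I \otimes H$ with the subalgebra $R \square_Q H$, and it is nilpotent because $I \square_Q H \subseteq I \otimes H$ and $I$ is nilpotent. On the source side, the coaction $\rho_A : A \to A \otimes H$ factors through $A_Q \square_Q H$ by coassociativity and counitality, giving an $H$-colinear algebra map $\rho_A : A \to A_Q \square_Q H$ (the unit of the adjunction). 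Composing with $f \square_Q \mathrm{id}_H$ yields an $H$-colinear algebra map $\bar F : A \to (R/I)\square_Q H$. Since $A$ is $H$-smooth, $\bar F$ lifts along $R \square_Q H \to (R/I)\square_Q H$ to an $H$-colinear algebra map $F : A \to R \square_Q H$.

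Finally I would descend $F$ using the evaluation (counit) map $\mathrm{ev}_W : W \square_Q H \to W$ obtained by restricting $\mathrm{id}_W \otimes \varepsilon_H$; since $\varepsilon_H$ is an algebra morphism, $\mathrm{ev}_W$ is a $Q$-colinear algebra morphism, and it is natural in $W$. Setting $g := \mathrm{ev}_R \circ F : A_Q \to R$ gives a $Q$-colinear algebra morphism, and naturality of $\mathrm{ev}$ together with the counit identity $\mathrm{ev}_{A_Q}\circ \rho_A = (\mathrm{id}_A \otimes \varepsilon_H)\circ \rho_A = \mathrm{id}_A$ shows that the composite of $g$ with $R \to R/I$ equals $f \circ \mathrm{ev}_{A_Q}\circ \rho_A = f$. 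Thus $g$ lifts $f$, proving that $A$ is $Q$-smooth. The step I expect to require the most care is checking that coinduction genuinely lands in $Q$- and $H$-comodule algebras and that $\mathrm{ev}_W$ is simultaneously $Q$-colinear and multiplicative, i.e.\ that every structure map respects the algebra and comodule structures at once; coflatness enters only to keep the nilpotent ideal exact, and no faithfulness is needed.
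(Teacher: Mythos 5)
Your proof is correct and follows essentially the same route as the paper: the paper also uses the adjunction between corestriction and the coinduction functor $\square_Q H$ (packaged there as a natural bijection $\mathrm{Alg}\mathscr{C}^Q(A,X)\simeq\mathrm{Alg}\mathscr{C}^H(A,X\,\square_Q H)$ and a commutative square of Hom-sets), with coflatness supplying exactness of $\square_Q H$ and hence the epimorphism $R\,\square_Q H\to R/I\,\square_Q H$ with nilpotent kernel. Your unit/counit formulation ($\rho_A$ to transport $f$ upward, $\mathrm{ev}$ to descend the lift) is just the unpacked form of that bijection, so the two arguments coincide.
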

\begin{proof}
Let $Q$ be a quotient Hopf algebra of $H$. 
Given an object $X$ of $\mathscr{C}^Q$ and $Y$ of $\mathscr{C}$, 
we have a canonical isomorphism 
$(X \, \square_Q H)\otimes Y =X \, \square_Q (H\otimes Y)$,
since $\otimes Y$ is exact. 
It follows that $X \, \square_QH$ naturally turns into an
object of $\mathscr{C}^H$. In fact, the co-tensor product $\square_Q H$ 
gives rise to a functor $\mathscr{C}^Q\to \mathscr{C}^H$.  
Moreover, we
have a natural isomorphism of abelian groups, 
\[
\mathscr{C}^Q(N,\ X)\overset{\simeq}{\longrightarrow} \mathscr{C}^H(N,\ X\, \square_Q H), 
\]
analogous to (in fact, generalizing) \eqref{ECNX}; $f$ from the group on the left maps to 
$(f\, \square_Q\operatorname{id_H})\circ \rho_N : N \to X\, \square_QH$ on the right. 
Since $\square_Q H$ is a lax monoidal functor, for which the relevant constraints are not necessarily isomorphic by definition, 
it follows that if $X$ is an algebra in $\mathscr{C}^Q$, then $X\, \square_Q H$ is such in $\mathscr{C}^H$.
Moreover, if $N$ is an algebra in $\mathscr{C}^H$, as well, then the isomorphism above restricts to 
a bijection  
\begin{equation*}\label{Ecolinext}
\mathrm{Alg}\mathscr{C}^Q(N,\ X)\overset{\simeq}{\longrightarrow} \mathrm{Alg}\mathscr{C}^H(N,\ X\, \square_Q H),
\end{equation*}
where $\mathrm{Alg}\mathscr{C}^Q$ and $\mathrm{Alg}\mathscr{C}^H$ denote the categories of
the algebras in respective categories. 

Let $A$ be an $H$-comodule algebra in $\mathscr{C}$, or namely, an algebra in $\mathscr{C}^H$. 
Given an algebra $R$ in $\mathscr{C}^Q$ and its nilpotent ideal $I$, we have the natural commutative diagram
\[
\begin{xy}
(0,7)*++{\mathrm{Alg}\mathscr{C}^Q(A,\ R)} ="1",
(41,7)  *++{\mathrm{Alg}\mathscr{C}^H(A,\ R\, \square_Q H)} ="2",
(0,-8)   *++{\mathrm{Alg}\mathscr{C}^Q(A,\ R/I)}  ="3",
(41,-8)  *++{\mathrm{Alg}\mathscr{C}^H(A,\ R/I\, \square_Q H).} ="4",
{"1" \SelectTips{cm}{} \ar @{->}^{\hspace{-3mm}\simeq} "2"},
{"3" \SelectTips{cm}{} \ar @{->}^{\hspace{-3mm}\simeq} "4"},
{"1" \SelectTips{cm}{} \ar @{->} "3"},
{"2" \SelectTips{cm}{} \ar @{->} "4"}
\end{xy}
\]
Assume that $H$ is $Q$-coflat. Then the natural algebra
morphism $R \, \square_Q H \to R/I\, \square_Q H$
in $\mathscr{C}^H$ is epic, whose kernel, $I\, \square_QH$,
is nilpotent. 
If $A$ is $H$-smooth, then the vertical arrow on the right is surjective, whence so is the one on the left. 
This shows that $A$ is $Q$-smooth. 
\end{proof}

\begin{rem}\label{Rnoncom}
It is now easy to show a non-commutative analogue of Theorem \ref{T2nd} in view of the proof of
\cite[Theorem 1.2]{MO}. 
Suppose that $\mathscr{C}$ is a semisimple abelian braided monoidal category which satisfies the
same conditions as (I)--(II), and $H$ is a Hopf algebra in $\mathscr{C}$, in general. 
The definition of \emph{(equivariant) smoothness} extends to 
this context in an obvious manner (though one might prefer the word ``quasi-free" to ``smooth", in view of \cite[Section 9.3.2]{Wei}), and we see that the following are equivalent: 
\begin{itemize}
\item[($\alpha$)] $H$ is smooth;
\item[($\beta$)] $H$ is $H$-smooth as a right $H$-comodule algebra;
\item[($\gamma$)] $H$ is $Q$-smooth as a right $Q$-comodule algebra
for every quotient Hopf algebra $Q$ of $H$ such that $H$ is coflat as a left
$Q$-comodule.
\item[($\delta$)] $H$ is left hereditary as an algebra in $\mathscr{C}$, or in other words, 
an $H$-submodule of any left $H$-module that is projective is necessarily projective. 
\end{itemize}
 
Indeed, a point of the proof is to see the following: as an analogue of Condition (b$'$) above, the condition
\begin{itemize}
\item[($\beta'$)] For every left $H$-module $M$ in $\mathscr{C}$, the 2nd Hochschild
cohomology $H^2(H, M)_{\mathscr{C}}$ constructed in $\mathscr{C}$ vanishes, where the $M$ of coefficients is regarded as an $(H,H)$-bimodule
with respect to the trivial right $H$-action through the counit $\varepsilon_H$
\end{itemize}
is equivalent to ($\beta$). 
Here, $H^2(H,M)_{\mathscr{C}}$ is naturally isomorphic to the 2nd Ext group
$\operatorname{Ext}^2_H(\mathbf{1},M)_{\mathscr{C}}$ for the left $H$-modules in 
$\mathscr{C}$, which form an abelian category with enough projectives. In addition, given 
a left $H$-module $N$ in $\mathscr{C}$, 
a projective resolution of the trivial $H$-module $\mathbf{1}$, with $\otimes N$ applied, turns into
a projective resolution of $N$. 

The conditions above, except ($\alpha$), have the variants with the sides, left and right, exchanged, 
and those variants and ($\alpha$) (thus, the above conditions all together) are proved to be equivalent to each other. 
\end{rem}


\section{Proof of Theorem \ref{T1st}}\label{Sec3}

We work in the category $\mathsf{Vec}$ of vector spaces
over a fixed field $k$,
and let $H$ be an ordinary Hopf algebra over $k$. 
The coproduct, the counit and the antipode of $H$ are denoted by
$\Delta_H$, $\varepsilon_H$ and $\mathcal{S}_H$, respectively, as before. The coproduct 
will be presented so that 
\[
\Delta_H(h)=h_{(1)}\otimes h_{(2)},\quad h \in H.
\]
By Theorem \ref{T2nd}, Conditions (a)--(d) for $H$ are known to be equivalent.

\subsection{Proof of Part 1, characteristic zero case}\label{subsec:charzero}
We do not assume $\operatorname{char} k =0$ until explicitly mentioned. 

The following is well known. 
But we give a proof for the sake of further discussion. 

\begin{lemma}\label{LSmGr}
Suppose that $H$ is of finite type. Then Conditions (a) and (e) are equivalent, 
or explicitly, $H$ is smooth if and only if
it is geometrically reduced. 
\end{lemma}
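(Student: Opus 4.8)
The plan is to prove the two implications separately, dispatching (a)$\Rightarrow$(e) in one line and spending the real effort on (e)$\Rightarrow$(a), where the group structure does the work. Throughout I use the fact recalled in the Introduction that for a finite-type algebra smoothness is equivalent to geometric regularity, and I pass freely to the affine group scheme $\mathfrak{G}=\Spec H$. For (a)$\Rightarrow$(e): if $H$ is smooth, then it is geometrically regular, so $\bar{k}\otimes_k H$ is regular and in particular reduced; hence $H$ is geometrically reduced.

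For (e)$\Rightarrow$(a) I would first reduce to $k$ algebraically closed. Both conditions are insensitive to the base change $k\to\bar{k}$: smoothness is preserved under base change and descends along the faithfully flat extension $\bar{k}/k$ (equivalently, geometric regularity of $H$ over $k$ is detected after extension to $\bar{k}$), while geometric reducedness of $H$ means exactly that $\bar{H}:=\bar{k}\otimes_k H$ is reduced. Since $\bar{k}$ is perfect and $\bar{H}$ is of finite type, smoothness of $\bar{H}$ over $\bar{k}$ coincides with regularity of $\bar{H}$. Thus it suffices to prove the purely scheme-theoretic statement: if $\bar{H}$ is reduced, then $\bar{H}$ is regular.

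The crux is homogeneity. For each $\bar{k}$-rational point $g$ of $\mathfrak{G}_{\bar{k}}:=\Spec\bar{H}$, left translation $\ell_g$ is a scheme automorphism of $\mathfrak{G}_{\bar{k}}$ carrying $e$ to $g$, so the local ring at $g$ is isomorphic to $\mathcal{O}_{\mathfrak{G}_{\bar{k}},e}$; over $\bar{k}$ these $g$ are exactly the closed points by the Nullstellensatz. Consequently, since the regular locus is open and $\mathfrak{G}_{\bar{k}}$ is Jacobson (finite type over a field, so closed points are dense and every nonempty closed set contains one), $\mathfrak{G}_{\bar{k}}$ is regular as soon as it is regular at a single closed point. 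Such a point is provided by generic smoothness: a reduced finite-type scheme over the perfect field $\bar{k}$ is smooth on a dense open set, because at a generic point $\eta$ of an irreducible component the local ring $\mathcal{O}_{\mathfrak{G}_{\bar{k}},\eta}=\kappa(\eta)$ is a finitely generated, hence separably generated, field extension of $\bar{k}$. That dense open meets the closed points, yielding a regular closed point; translating it to $e$ shows $\mathfrak{G}_{\bar{k}}$ is regular at $e$, and translating $e$ to every closed point (together with the Jacobson property) upgrades this to regularity everywhere. Descent then returns smoothness of $H$ over $k$.

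The substantive ingredients, and where I expect the only real difficulty, are (i) the two base-change reductions, which rest on descent of smoothness along $\bar{k}/k$ and on the identification of geometric reducedness with reducedness of $\bar{H}$, and (ii) generic smoothness over a perfect field. Everything else is the formal homogeneity argument, which is what makes the equivalence of \emph{reduced} and \emph{regular} special to group schemes. Note that the positive-characteristic content is concentrated entirely in generic smoothness, where perfectness of $\bar{k}$ is essential; in characteristic zero this step is automatic and the lemma reflects the classical Cartier phenomenon.
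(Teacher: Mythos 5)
Your proof is correct, but it takes a genuinely different route from the paper's. The paper argues via K\"ahler differentials: it quotes the structural fact that $\Omega_{H/k}$ is projective (in fact free) for any finite-type Hopf algebra \cite[11.3 Theorem]{Wa} --- this is where the Hopf structure enters, via a Hopf-module-theorem version of homogeneity --- and then invokes the general criterion \cite[Lemmas 33.25.1 and 33.25.2]{Stack} that a finite-type algebra with projective differential module is smooth when either $\operatorname{char}k=0$, or $\operatorname{char}k>0$ with $k$ perfect and the algebra reduced, applying the latter to $\bar{k}\otimes_k H$. You instead run the classical geometric argument directly: reduce to $k=\bar{k}$ using the identification of smoothness with geometric regularity, get one regular closed point from generic smoothness over a perfect field (the local ring at a generic point of a reduced scheme is a field, and finitely generated extensions of $\bar{k}$ are separably generated), and then propagate regularity to all points by translation, openness of the regular locus, and the Jacobson property. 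Both proofs exploit the group structure and both concentrate the positive-characteristic subtlety in a perfectness statement; the paper's version is shorter given its references and isolates a reusable structural fact about $\Omega_{H/k}$, while yours is more self-contained and makes the homogeneity mechanism --- the real reason reduced group schemes over perfect fields are regular --- explicit rather than hidden inside the cited lemmas. Your reduction to $\bar{k}$ also matches the paper's final instruction to ``apply this to the base extension of $H$ to the algebraic closure,'' so the two arguments agree on where the base change happens; they differ only in the engine used over $\bar{k}$.
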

\begin{proof}
It is known that a Noetherian algebra is smooth if and only if it is geometrically
regular, i.e., it remains regular after base extension to the algebraic closure of $k$; see 
\cite[Corollary 9.3.13 and the following Remark]{Wei}. 

\emph{`Only if'.}\ The geometrical regularity implies the geometrical reducedness. 

\emph{`If'.}\ 
The $H$-module $\Omega_{H/k}$ of K\"{a}hler differentials 
is projective (in fact, free); see \cite[11.3 Theorem]{Wa}, for example. 
In general, a finite-type algebra $A$ with projective 
K\"{a}hler differential module is smooth, or equivalently, geometrically regular, either if $\operatorname{char} k=0$ or if
$\operatorname{char} k>0$, $k$ is perfect
and $A$ is reduced; see \cite[Lemmas 33.25.1 and 33.25.2]{Stack}. 
In the case where $\operatorname{char} k>0$, apply this to the base extension of $H$ to the algebraic closure
of $k$.
\end{proof}

Recall that $H$ can be presented as a directed union 
\begin{equation}\label{Eunion}
H=\bigcup_{\alpha}H_{\alpha}
\end{equation}
of Hopf subalgebras $H_{\alpha}$ of finite type. 
Accordingly, we have
\begin{equation}\label{Eindlim}
S_H^2(H^+)=\varinjlim_{\alpha}S_{H_{\alpha}}^2(H_{\alpha}^+),\quad \mu_H=\varinjlim_{\alpha}\mu_{H_{\alpha}}. 
\end{equation}

Assume $\operatorname{char} k=0$. Notice from the general fact noted in the last paragraph of the last proof  
that every $H_{\alpha}$ satisfies Condition (a) and the equivalent (e), whence $H$ does (e). 
On the other hand, Theorem \ref{T2nd} and Lemma \ref{LSmGr} ensure the equivalence
of Conditions (a)--(e) for every $H_{\alpha}$ (in arbitrary characteristic). 
In view of \eqref{Eindlim}, it follows that $H$ satisfies (d), and thus (a)--(c), as well, by the cited theorem. 
This proves Part 1 of Theorem \ref{T1st}. 

\subsection{Proof of Part 2, positive characteristic case}\label{subsec:charp}
Throughout in this subsection we assume $\operatorname{char}k=p>0$.

A main subject of our argument is the abelian group
$H_s^2(H,~k)$, which is now a vector space. The coefficients are in the 
trivial $H$-module $k$.
By \eqref{EH2Ker} we have a natural $k$-linear isomorphism
\begin{equation}\label{EdualKer}
H_s^2(H,\ k) \simeq \operatorname{Hom}(\operatorname{Ker}(\mu_H),\ k).
\end{equation}

\begin{prop}\label{Pbpp}
Conditions (a)--(d), which are known to be mutually equivalent by Theorem \ref{T2nd}, are equivalent to 
\begin{itemize}
\item[(b$''$)] $H_s^2(H,~k)=0$, where $k$ is regarded as a trivial $H$-module. 
\end{itemize}
\end{prop}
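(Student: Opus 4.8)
The plan is to reduce everything to the duality isomorphism \eqref{EdualKer}, which identifies $H_s^2(H,k)$ with the linear dual of $\operatorname{Ker}(\mu_H)$. Since Conditions (a)--(d) are already known to be equivalent by Theorem \ref{T2nd}, and since Condition (d) asserts precisely that $\mu_H$ is injective, i.e.\ that $\operatorname{Ker}(\mu_H)=0$, it suffices to prove that (b$''$) is equivalent to (d).

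First I would recall that \eqref{EdualKer} is the specialization to $X=k$ of the natural isomorphism \eqref{EH2Ker} of Proposition \ref{PH2C}(2), so that
\[
H_s^2(H,\ k)\simeq \operatorname{Hom}(\operatorname{Ker}(\mu_H),\ k)
\]
as $k$-vector spaces. The implication (d)$\Rightarrow$(b$''$) is then immediate: if $\operatorname{Ker}(\mu_H)=0$ then its dual vanishes, so $H_s^2(H,k)=0$.

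For the converse (b$''$)$\Rightarrow$(d) I would invoke the elementary fact that a $k$-vector space $V$ is zero if and only if its full linear dual $\operatorname{Hom}(V,k)$ is zero; for if $V\neq 0$ one chooses a basis and any coordinate functional furnishes a nonzero element of $\operatorname{Hom}(V,k)$ (this works in arbitrary, possibly infinite, dimension through a choice of basis). Applying this to $V=\operatorname{Ker}(\mu_H)$ together with \eqref{EdualKer}, the vanishing $H_s^2(H,k)=0$ forces $\operatorname{Ker}(\mu_H)=0$, which is exactly (d). Combined with Theorem \ref{T2nd}, this shows (b$''$) is equivalent to each of (a)--(d).

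I do not expect any genuine obstacle here: the only non-formal inputs are the already-established isomorphism \eqref{EdualKer} and the observation that over a field a space with trivial dual vanishes. The argument is special to $\mathsf{Vec}$ only in that $\mathscr{C}(\operatorname{Ker}(\mu_H),\ k)$ is the honest $k$-linear dual; indeed this is why testing against the single object $k$ suffices here, whereas the general Condition (b$'$) must range over all $X$ because in an arbitrary $\mathscr{C}$ the functor $\mathscr{C}(-,\mathbf{1})$ need not detect nonzero objects.
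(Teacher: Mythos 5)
Your proposal is correct and is essentially the paper's own proof: the paper likewise deduces (b$''$)$\Leftrightarrow$(d) directly from the duality isomorphism \eqref{EdualKer}, leaving implicit the elementary fact that a $k$-vector space with vanishing linear dual is zero, which you spell out. Nothing further is needed.
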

\begin{proof}
This holds since we have by \eqref{EdualKer} that (b$''$)$\Leftrightarrow$(d). 
\end{proof}

To prove Part 2 of Theorem \ref{T1st}, notice first that (e) implies (f). 
Takeuchi \cite[Proposition 1.9]{T} proves that
(f) implies (d). 
In view of Proposition \ref{Pbpp}, it remains to prove that (b$''$) implies (e). 
This follows from the following proposition, which, indeed, shows that if $H_s^2(H,~k)=0$, then
we have $H_s^2(J,~k)=0$ for every finite-type Hopf subalgebra $J$ of $H$. 
By Proposition \ref{Pbpp} now applied to $J$, 
the last condition is equivalent to 
every $J$ being smooth. This is equivalent, by Lemma \ref{LSmGr}, to every $J$ being geometrically reduced, 
or to Condition (e).

\begin{prop}\label{Pres}
Given a Hopf subalgebra $J$ of $H$, the restriction map
\[
\operatorname{res} : H_s^2(H, k) \to H_s^2(J, k)
\]
is surjective.
\end{prop}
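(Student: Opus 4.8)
The plan is to translate everything into André--Quillen (cotangent) cohomology and to exploit the flatness of Hopf-subalgebra extensions together with the Jacobi--Zariski exact sequence. First I would record the natural identification
\[
H_s^2(A,\ k)\simeq D^1(A/k,\ k),
\]
valid for every commutative algebra $A$ with augmentation $\varepsilon_A : A \to k$, where $D^\bullet(A/k,-)$ denotes André--Quillen cohomology and $k=A/A^+$ is the trivial module; indeed both groups classify commutative square-zero extensions of $A$ by $k$, and \eqref{EdualKer} realises the left-hand side as $\operatorname{Hom}(\operatorname{Ker}\mu_A,\ k)$. Since this identification is natural in $A$, the restriction map $\operatorname{res}$ becomes the canonical map $D^1(H/k,k)\to D^1(J/k,k)$ induced by the inclusion $J\hookrightarrow H$.

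Next I would reduce to the case that $H$ is of finite type. Writing $H=\bigcup_\alpha H_\alpha$ as in \eqref{Eunion} and applying $\operatorname{Hom}(-,k)$ to \eqref{Eindlim}, one gets $H_s^2(H,k)=\varprojlim_\alpha H_s^2(H_\alpha,k)$, compatibly with restriction. Granting the proposition for the finite-type subalgebras $J\subseteq H_\alpha\subseteq H_\beta$, the transition maps $H_s^2(H_\beta,k)\to H_s^2(H_\alpha,k)$ are surjective, so the inverse system is Mittag--Leffler; hence $\varprojlim_\alpha H_s^2(H_\alpha,k)$ surjects onto each $H_s^2(H_\alpha,k)$, and composing with the finite-type surjection onto $H_s^2(J,k)$ yields surjectivity of $\operatorname{res}$ in general. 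Thus it remains to treat finite-type, hence Noetherian, $H$.

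So assume $H\supseteq J$ are of finite type. A commutative Hopf algebra is faithfully flat over any Hopf subalgebra, so $J\hookrightarrow H$ is faithfully flat; geometrically $\operatorname{Spec}H\to \operatorname{Spec}J$ is a torsor under $\operatorname{Spec}\overline{H}$, where $\overline H=H\otimes_J k=H/J^+H$ is the associated quotient Hopf algebra. The Jacobi--Zariski sequence for $k\to J\to H$ with coefficients in the $H$-module $k$ reads
\[
\cdots\to D^1(H/k,k)\xrightarrow{\ \operatorname{res}\ } D^1(J/k,k)\to D^2(H/J,k)\to\cdots,
\]
so it suffices to prove $D^2(H/J,k)=0$. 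I would obtain this by showing that $J\hookrightarrow H$ is a locally complete intersection homomorphism, which forces $D^{q}(H/J,-)=0$ for all $q\ge 2$. Being flat and of finite type, $J\to H$ is lci as soon as its fibres are; by the torsor structure every fibre becomes, after a faithfully flat base change, isomorphic to $\operatorname{Spec}\overline H$, so the issue is that the finite-type Hopf algebra $\overline H$ is a local complete intersection over $k$. This is where the characteristic-$p$ input enters: one combines the structure theory of finite-type commutative Hopf algebras (the infinitesimal Frobenius kernels have complete-intersection coordinate rings, the étale part is smooth, and lci is preserved under the resulting extensions) with Avramov's cohomological characterisation of lci homomorphisms through the vanishing and rigidity of cotangent homology \cite{A}.

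The main obstacle is precisely this last step---establishing $D^2(H/J,k)=0$, equivalently that $J\hookrightarrow H$ (equivalently the fibre Hopf algebra $\overline H$) is lci in positive characteristic. In characteristic zero it is automatic, since every finite-type Hopf algebra is smooth; but in characteristic $p$ a direct computation seems out of reach, and one genuinely needs Avramov's deep theorems. A secondary technical point is to carry out all the identifications---the isomorphism with $D^1$, its naturality in $A$, and the passage to the inverse limit over the $H_\alpha$---compatibly, so that the finite-type result really does propagate to arbitrary $H$.
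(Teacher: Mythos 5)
Your treatment of the finite-type case is essentially the paper's own proof: identify $H_s^2(-,k)$ with the first Andr\'e--Quillen cohomology $D^1(-\mid k,\ k)$, invoke the Jacobi--Zariski sequence for $k \to J \to H$ to reduce the surjectivity of $\operatorname{res}$ to the vanishing of $D^2(H\mid J,\ k)$, and obtain that vanishing from Takeuchi's faithful flatness of $H$ over $J$ together with the triviality of the torsor after the flat base change along $J \to H$ (the paper makes this explicit as the isomorphism $H\otimes_J H \simeq H \otimes Q$ of \eqref{EHJQ}, where $Q = H/J^+H$ is your $\overline{H}$), the fact that the finite-type Hopf algebra $Q$ is locally complete intersection (Proposition \ref{Plci}, Corollary \ref{Clci}), and Avramov's vanishing theorem \cite{A}. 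Your packaging of the last step as ``$J \to H$ is an lci homomorphism, hence $D^q(H\mid J,\ -) = 0$ for $q \ge 2$'' is a correct equivalent of the paper's base-change computation $D^2(H\mid J,\ k) = D^2(Q\mid k,\ k) = \operatorname{Hom}_k(D_2(Q\mid k,\ k),\ k) = 0$.

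The genuine gap is in your reduction to the finite-type case. The inverse system $\{H_s^2(H_\alpha,k)\}_\alpha$ is indexed by the directed set of all finite-type Hopf subalgebras of $H$, which is in general uncountable and admits no countable cofinal chain; the Mittag--Leffler principle you invoke (surjective transition maps imply that the limit surjects onto each term) is a theorem about towers indexed by $\mathbb{N}$ (see \cite[Section 3.5]{Wei}) and does not hold for arbitrary directed index sets, where an inverse system with surjective transition maps need not surject onto its terms. In addition, your final composition $\varprojlim_\alpha H_s^2(H_\alpha,k)\to H_s^2(H_\alpha,k)\to H_s^2(J,k)$ presupposes $J\subseteq H_\alpha$ for some $\alpha$, i.e., that $J$ itself is of finite type, whereas the proposition allows an arbitrary Hopf subalgebra $J$. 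Both defects are repaired by dualizing, which is exactly what the paper does and which you already set up via \eqref{EdualKer}: surjectivity of $\operatorname{res}$ is equivalent to injectivity of $\operatorname{Ker}(\mu_J)\to\operatorname{Ker}(\mu_H)$ (equivalently, of the map $S_J^2(J^+)\to S_H^2(H^+)$ of \eqref{ESS}, by the five lemma), and injectivity is preserved under filtered colimits since these are exact; an arbitrary $J$ is then handled by writing $J=\bigcup_\alpha (J\cap H_\alpha)$, each $J\cap H_\alpha$ being of finite type by \cite[Corollary 3.11]{Tak}, and invoking \eqref{Eindlim}. Note that in the present situation the conclusion you want from Mittag--Leffler does happen to be true, but only because the system is the term-wise $k$-dual of a filtered system of injections---that is, precisely because of the duality argument just described, not because of any general principle about surjective inverse systems.
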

\begin{proof}
Let us consider the map
\begin{equation}\label{ESS}
S_J^2(J^+)\to S_H^2(H^+)
\end{equation}
which arises naturally from the inclusion $J \hookrightarrow H$. The five lemma
shows that
the map above is injective 
if and only if so is its restriction $\operatorname{Ker}(\mu_J)\to \operatorname{Ker}(\mu_H)$ to the kernels
of the $\mu$ maps. 
It follows by the isomorphism \eqref{EdualKer} and its naturality that the injectivity 
of the map \eqref{ESS}
is equivalent to the desired surjectivity of $\operatorname{res}$. 

Present $H=\bigcup_{\alpha}H_{\alpha}$ as in \eqref{Eunion}. 
To prove the injectivity of the map \eqref{ESS} (and the equivalent surjectivity of $\operatorname{res}$), we may (and we do)
suppose
that $H$ and thus $J$ are of finite type, in view of \eqref{Eindlim}, replacing 
$H$ and $J$ with $H_{\alpha}$ and $J \cap H_{\alpha}$, respectively. 
Here we remark that $J \cap H_{\alpha}$ is of finite type by \cite[Corollary 3.11]{Tak}. 

In general, for an algebra $A$, the symmetric 2nd Hochschild cohomology
$H_s^2(A,~-)$ is identified, through a natural isomorphism,
with
the 1st Andr\'{e}-Quillen cohomology $D^1(A\mid k,~-)$;
see \cite[Exercise 8.8.4]{Wei}. The restriction map
for $H^2_s$ is identified with such a map for $D^1$, 
which extends up to the 2nd Andr\'{e}-Quillen cohomology so that
\[
D^1(A\mid k,~M) \overset{\operatorname{res}}{\longrightarrow} D^1(B\mid k,~M)\longrightarrow D^2(A\mid B,~M),
\]
where $B$ is a subalgebra of $A$, and $M$ is an $A$-module.

Therefore, for our purpose, it suffices to prove 
that $D^2(H\mid J,~k)=0$. 
Let $Q:=H/J^+H$ be the quotient Hopf algebra 
of $H$ corresponding $J$; this $Q$ is necessarily of finite type.
We have 
an $H$-algebra isomorphism (see \cite[p.456, line 4]{T0})
\begin{equation}\label{EHJQ}
H \otimes_J H \overset{\simeq}{\longrightarrow} H \otimes Q,\quad
a \otimes_J b \mapsto ab_{(1)}\otimes (b_{(2)}\operatorname{mod}(J^+)),
\end{equation}
where $H\otimes_J H$ is supposed to be an $H$-algebra
through the embedding of $H$ into the left tensor factor. 
The base change property of the 
Andr\'{e}-Quillen cohomology (see \cite[Proposition 1.4.3]{MR}, for example), applied to the flat 
ring maps $J \to H$ and $k \to H$, shows 
\begin{equation*}\label{Ebchange}
\begin{split}
D^2(H\mid J,~M)&=D^2(H\otimes_J H\mid H,~M)\\
&=D^2(H\otimes Q\mid H,~M)= D^2(Q\mid k,~M),
\end{split}
\end{equation*}
where $M$ is a module over $H\otimes_J H\, (=H\otimes Q)$, and may be now supposed
to be the trivial module $k$. 
But this cohomology, in the case where $M=k$, vanishes, as desired. Indeed, since $Q$
is locally complete intersection by Corollary \ref{Clci} below, 
it follows by
Avramov's Theorem \cite[(1.2)]{A} that 
the 2nd Andr\'{e}-Quillen homology $D_2(Q\mid k,~k)$ vanishes,
whence
\[
D^2(Q\mid k,~k)=\operatorname{Hom}_k(D_2(Q\mid k,~k),~k)=0.
\]
See \cite[Exercise 8.8.6]{Wei}. 
\end{proof}


Suppose that $k$ is a perfect field (of $\operatorname{char}k=p>0$), and
let $H$ be a Hopf algebra of finite type. Then the nil radical $\operatorname{Nil} H$ of $H$
is a Hopf ideal, and the resulting quotient Hopf algebra
\[
H_{\operatorname{red}}:=H/\operatorname{Nil}H
\]
is geometrically reduced, or equivalently (see Lemma \ref{LSmGr}), ($H_{\operatorname{red}}$-)smooth.

\begin{prop}[\text{\cite[III, $\S$ 3, 6.4 Corollaire]{DG}}]\label{Plci}
Let the situation be as above. If $H\ne H_{\operatorname{red}}$, then $H$ is, 
regarded naturally as an $H_{\operatorname{red}}$-comodule algebra, in the form
\[
H=k[T_1,\dots, T_n]/(T_1^{p^{e_1}},\dots,
T_n^{p^{e_n}})
\otimes H_{\operatorname{red}},
\]
where $n$ and $e_i$, $1\le i \le n$, are positive integers.
\end{prop}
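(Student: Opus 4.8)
The statement is group-scheme structure theory, essentially \cite[III, $\S 3$, 6.4]{DG}, so the plan is to pass to the affine group scheme $\mathfrak{G}=\operatorname{Spec}H$ and establish the dual geometric assertion. Write $\mathfrak{G}_{\operatorname{red}}=\operatorname{Spec}H_{\operatorname{red}}$ for the associated reduced closed subscheme. Since $k$ is perfect, the product $\mathfrak{G}_{\operatorname{red}}\times\mathfrak{G}_{\operatorname{red}}$ stays reduced, so the multiplication and inverse of $\mathfrak{G}$ restrict to it; hence $\mathfrak{G}_{\operatorname{red}}$ is a closed subgroup scheme, and it is smooth because $H_{\operatorname{red}}$ is (Lemma \ref{LSmGr}). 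As $H$ is commutative, $\mathfrak{G}_{\operatorname{red}}$ is normal. The $H_{\operatorname{red}}$-comodule-algebra structure on $H$ appearing in the statement is precisely right translation by $\mathfrak{G}_{\operatorname{red}}$, so what I must show is that $\mathfrak{G}\cong \mathfrak{Q}\times\mathfrak{G}_{\operatorname{red}}$ as $\mathfrak{G}_{\operatorname{red}}$-schemes, where $\mathfrak{G}_{\operatorname{red}}$ acts on the right factor alone and $\mathfrak{Q}:=\mathfrak{G}/\mathfrak{G}_{\operatorname{red}}$; equivalently, that the translation torsor $\pi:\mathfrak{G}\to\mathfrak{Q}$ is trivial.

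Next I would analyze $\mathfrak{Q}$. Form the fppf quotient $\mathfrak{Q}=\mathfrak{G}/\mathfrak{G}_{\operatorname{red}}$, a finite commutative group scheme; because $\mathfrak{G}_{\operatorname{red}}$ has full dimension in $\mathfrak{G}$ and surjects onto every étale quotient, $\mathfrak{Q}$ is finite and connected, hence infinitesimal. Thus $A:=\mathcal{O}(\mathfrak{Q})$, which sits inside $H$ as the subalgebra $H^{\operatorname{co}H_{\operatorname{red}}}$ of coinvariants, is a finite-dimensional local Hopf algebra with nilpotent augmentation ideal and residue field $k$. The decisive structural input is that over a perfect field such an $A$ is a truncated polynomial algebra: choosing $T_1,\dots,T_n\in A^+$ lifting a basis of the cotangent space $A^+/(A^+)^2$, these generate $A$ by Nakayama, each is nilpotent of some order $p^{e_i}$, and one shows the monomials $T_1^{a_1}\cdots T_n^{a_n}$ with $0\le a_i<p^{e_i}$ form a $k$-basis, giving $A\cong k[T_1,\dots,T_n]/(T_1^{p^{e_1}},\dots,T_n^{p^{e_n}})$. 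This basis claim is where the coalgebra structure and perfectness are indispensable---without them the nilpotency orders need not be $p$-powers and $\dim_k A$ need not be a power of $p$---and it is the heart of the cited result in \cite{DG}.

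Finally I would trivialize the torsor. The map $\pi:\mathfrak{G}\to\mathfrak{Q}$ is a $\mathfrak{G}_{\operatorname{red}}$-torsor, and it is smooth because $\mathfrak{G}_{\operatorname{red}}$ is. The identity $e\in\mathfrak{G}(k)$ lies over the unique ($k$-rational) closed point of the Artinian local scheme $\mathfrak{Q}$; since $\pi$ is formally smooth and $\mathfrak{Q}$ is a finite infinitesimal thickening of that point, the infinitesimal lifting criterion, applied along the square-zero steps $\operatorname{Spec}(A/\mathfrak{m}^{j+1})\to\operatorname{Spec}(A/\mathfrak{m}^{j})$, produces a section $\sigma:\mathfrak{Q}\to\mathfrak{G}$ of $\pi$. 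Translation along $\sigma$ then yields an isomorphism $\mathfrak{Q}\times\mathfrak{G}_{\operatorname{red}}\xrightarrow{\ \sim\ }\mathfrak{G}$, $(q,g)\mapsto\sigma(q)\,g$, of $\mathfrak{G}_{\operatorname{red}}$-schemes, which dualizes to the asserted isomorphism $H\cong A\otimes H_{\operatorname{red}}$ of $H_{\operatorname{red}}$-comodule algebras. The hypothesis $H\ne H_{\operatorname{red}}$ means $\mathfrak{G}_{\operatorname{red}}\ne\mathfrak{G}$, forcing $A\ne k$, i.e.\ $n\ge 1$ and every $e_i\ge 1$.

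I expect the main obstacle to be the truncated-polynomial normal form of the second step: this is a genuinely Hopf-theoretic fact, the exponents $p^{e_i}$ and the $p$-power dimension arising from the interaction of the Frobenius with the comultiplication (e.g.\ via Frobenius kernels or Dieudonné theory), whereas the triviality of the torsor is a soft consequence of smoothness over an Artinian local base, and the first step is routine given perfectness.
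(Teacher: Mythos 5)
Your proposal contains a genuine gap, and it sits exactly at the point you call the ``decisive structural input.'' The inference ``As $H$ is commutative, $\mathfrak{G}_{\operatorname{red}}$ is normal'' is false: commutativity of $H$ \emph{as an algebra} is the standing hypothesis on all Hopf algebras in this paper and only says that $\mathfrak{G}=\operatorname{Spec}H$ is an honest affine group scheme; it does not make the group law abelian (that would require $H$ to be cocommutative). And over a perfect field $\mathfrak{G}_{\operatorname{red}}$ need \emph{not} be normal in $\mathfrak{G}$: for $\mathfrak{G}=\alpha_p\rtimes\mathbb{G}_m$ (with $\mathbb{G}_m$ acting by scaling) one has $\mathfrak{G}_{\operatorname{red}}=\mathbb{G}_m$, and conjugation sends $(0,t)$ to $(a(1-t),t)$, which does not lie in $\mathbb{G}_m$. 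Consequently $\mathfrak{Q}=\mathfrak{G}/\mathfrak{G}_{\operatorname{red}}$ is in general only a finite infinitesimal homogeneous space, not a group scheme, and $A=\mathcal{O}(\mathfrak{Q})=H^{\operatorname{co}H_{\operatorname{red}}}$ is only a left coideal subalgebra of $H$, not a Hopf algebra. Your second step invokes the structure theorem for finite connected group schemes over a perfect field (truncated polynomial form with $p$-power exponents), which applies to finite local \emph{Hopf} algebras; it cannot be applied to $A$. So the heart of the proposition is left unproved: showing that the coinvariant subalgebra, with no Hopf structure available on it, is of the form $k[T_1,\dots,T_n]/(T_1^{p^{e_1}},\dots,T_n^{p^{e_n}})$.

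This is precisely the difficulty the paper's proof confronts head-on: writing $B:=H^{\operatorname{co}(H_{\operatorname{red}})}$, it proves the truncated polynomial form by induction on $\dim B$, using the Frobenius map (so that $B^p$ is again a coinvariant algebra for the Hopf subalgebra $H^p$), freeness of $B$ over $B^p$ via \cite[3.4 Theorem]{MW}, and a reduction to the quotient $C=B/(B^p)^+B$, where a Hopf structure is finally recovered only on the associated graded algebra $\operatorname{gr}C\cong(\operatorname{gr}J)/K^+(\operatorname{gr}J)$ built from the Frobenius-kernel-type quotient Hopf algebras $J=H/(H^p)^+H$ and $K=H_{\operatorname{red}}/(H_{\operatorname{red}}^p)^+H_{\operatorname{red}}$; Sweedler's lemma on primitives then pins down $\operatorname{gr}C$. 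By contrast, your third step (trivializing the torsor $\mathfrak{G}\to\mathfrak{Q}$ by formal smoothness over the Artinian base) is sound and is the geometric counterpart of the paper's first step, which obtains an $H_{\operatorname{red}}$-colinear algebra section of $H\to H_{\operatorname{red}}$ from the $H_{\operatorname{red}}$-smoothness of $H_{\operatorname{red}}$ and then applies the Hopf-module theorem to get $H\cong B\otimes H_{\operatorname{red}}$. But that splitting is the easy half; without repairing the normality error, the structural half of your argument does not go through.
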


The corollary in \cite{DG} referred to above shows the result more generally for locally algebraic group schemes.
We will give below
a highly self-contained proof in the present affinity situation, which uses the
$H_{\operatorname{red}}$-smoothness of $H_{\operatorname{red}}$.

Let $A$ be an algebra over an arbitrary field, which is 
essentially of finite type, i.e., a localization of a finite-type algebra
by some multiplicatively closed subset.
We say that $A$ is \emph{locally complete intersection}, 
if the localization $A_{\mathfrak{p}}$ at every prime ideal $\mathfrak{p}$ is complete
intersection. A local algebra essentially of finite type,
such as $A_{\mathfrak{p}}$ above, is said to be \emph{complete
intersection}, if it is in the form
\begin{equation*}\label{EUkT}
S^{-1}k[T_1,\dots, T_n]/I,
\end{equation*}
where $S^{-1}k[T_1,\dots, T_n]$ is a localization of 
a polynomial algebra $k[T_1,\dots, T_n]$, and $I$
is an ideal generated by a regular sequence. 
It is known that $A$ is locally complete intersection if it is regular, or in particular, smooth. 
In addition, $A$ is locally complete intersection if it turns into such after some base field extension; see \cite[(5.11) (2)]{A}. 
These facts, combined with Proposition \ref{Plci}, prove
the following corollary in the non-trivial, positive-characteristic case.

\begin{corollary}\label{Clci}
A finite-type Hopf algebra over an arbitrary field is locally complete intersection. 
\end{corollary}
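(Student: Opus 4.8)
The plan is to dispatch the characteristic-zero case at once and then reduce the positive-characteristic case, via base extension, to the perfect-field situation settled by Proposition \ref{Plci}. Throughout I use the two facts recalled just before the statement: a regular (in particular smooth) algebra essentially of finite type is locally complete intersection, and the locally complete intersection property descends along base field extensions (\cite[(5.11)(2)]{A}).

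If $\operatorname{char}k=0$, the \emph{`if'} part of the proof of Lemma \ref{LSmGr} applies verbatim: the module of K\"{a}hler differentials $\Omega_{H/k}$ of a finite-type Hopf algebra is free, so $H$ is smooth, hence regular, hence locally complete intersection. So assume $\operatorname{char}k=p>0$. Here I first replace $k$ by its algebraic closure $\overline{k}$ and $H$ by $H\otimes_k\overline{k}$; the latter is again a finite-type Hopf algebra, now over the perfect field $\overline{k}$, and by the descent fact \cite[(5.11)(2)]{A} it suffices to treat this case. Thus I may and do assume $k$ is perfect. If $H=H_{\operatorname{red}}$, then $H$ is geometrically reduced, hence smooth by Lemma \ref{LSmGr}, and so locally complete intersection. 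Otherwise Proposition \ref{Plci} furnishes, after discarding the comodule structure, an algebra isomorphism
\[
H\;\simeq\;k[T_1,\dots,T_n]/(T_1^{p^{e_1}},\dots,T_n^{p^{e_n}})\,\otimes\,H_{\operatorname{red}}.
\]

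It then remains to recognise this tensor product as locally complete intersection. Writing $B:=H_{\operatorname{red}}$, which is smooth and hence regular, the right-hand side is exactly
\[
B[T_1,\dots,T_n]/(T_1^{p^{e_1}},\dots,T_n^{p^{e_n}}).
\]
The polynomial ring $B[T_1,\dots,T_n]$ is regular, and the monomials $T_1^{p^{e_1}},\dots,T_n^{p^{e_n}}$ in distinct variables form a regular sequence over the arbitrary base ring $B$ (a standard fact, seen by successively quotienting: $B[T_1,\dots,T_n]/(T_1^{p^{e_1}},\dots,T_i^{p^{e_i}})$ is a free $B$-module with a monomial basis, on which $T_{i+1}^{p^{e_{i+1}}}$ is a nonzerodivisor). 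Hence $H$ is the quotient of a regular ring by a regular sequence, so it is a complete intersection and in particular locally complete intersection.

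The only genuinely non-formal point is this last step, the verification that the truncated-polynomial factor contributes a regular sequence over $B$; everything else is bookkeeping, the passage to $\overline{k}$, and the two cited descent/regularity facts. I expect the main conceptual obstacle, rather, to be ensuring that one really may work over a \emph{perfect} field before invoking Proposition \ref{Plci}: this is precisely what the descent statement \cite[(5.11)(2)]{A} buys us, since without it the decomposition of Proposition \ref{Plci} is unavailable over an imperfect $k$.
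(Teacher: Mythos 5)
Your proof is correct and follows essentially the same route the paper intends: smoothness (hence regularity) disposes of the characteristic-zero and reduced cases, and in positive characteristic one base-extends to the algebraic closure, invokes Proposition \ref{Plci} over the perfect field, and descends via \cite[(5.11)(2)]{A}. The paper leaves these steps implicit in one sentence; your only addition is the explicit (and correct) verification that the truncated-polynomial factor contributes a regular sequence, which the paper treats as routine.
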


\begin{proof}[Proof of Proposition \ref{Plci}]
Let us be in the situation of the proposition.

Notice that the nil radical $\operatorname{Nil}H$ of $H$ is nilpotent. Since 
$H_{\operatorname{red}}$ is $H_{\operatorname{red}}$-smooth, 
the natural map $\pi : H \to H_{\operatorname{red}}=H/\operatorname{Nil}H$, 
as an $H_{\operatorname{red}}$-colinear algebra-surjection,   
has a section, say, $\phi : H_{\operatorname{red}}\to H$. 
Replacing $\phi$ with $a \mapsto \varepsilon_H(\phi(\mathcal{S}_{H_{\operatorname{red}}}(a_{(1)}))\phi(a_{(2)})$,
we may and we do suppose
that it is counit-preserving, $\phi \circ \varepsilon_{H_{\operatorname{red}}}=\varepsilon_H$.
Denote the left coideal subalgebra of $H_{\operatorname{red}}$-{\em{coinvariants}} in $H$ by
\begin{equation*}\label{EB}
B := H^{\operatorname{co}(H_{\operatorname{red}})}\, 
(=\{\, h \in H\mid h_{(1)}\otimes \pi(h_{(2)})=h\otimes \pi(1)\, \}),
\end{equation*}
and set $\varepsilon_B:=\varepsilon_H|_B$ and $B^+:=\operatorname{Ker}(\varepsilon_B)\, (=B\cap H^+)$. 
Notice that $\phi$ makes the $H_{\operatorname{red}}$-comodule (algebra) $H$ into 
an $H_{\operatorname{red}}$-Hopf module \cite[Section 4.1]{Sw}. By
the Hopf-module Theorem \cite[Theorem 4.1.1]{Sw} we see that
\[
\alpha : B \otimes H_{\operatorname{red}} \to H,\quad \alpha(b\otimes a)=b\phi(a)
\]
is an $H_{\operatorname{red}}$-colinear isomorphism of $B$-algebras, which is counit-preserving,
$\varepsilon_H\circ \alpha = \varepsilon_B\otimes \varepsilon_{H_{\operatorname{red}}}$. 
It follows that $B^+H=\operatorname{Nil}H$, whence $B$ is finite-dimensional and local. 

The rest is a modification of the proof of the Theorem in \cite[Section 14.4]{Wa}, which proves the result
in the special case where $B=H$. 
We prove by induction on $\dim B$ that $B$ is in the form
\begin{equation*}\label{EkTpe}
k[T_1,\dots,T_n]/(T_1^{p^{e_1}},\dots, T_n^{p^{e_n}}),
\end{equation*}
which, we understand, presents $k$ in the first induction step where $\dim B=1$. 
Recall that for an arbitrary algebra $A$, the
the Frobenius map $F_A : k^{1/p}\otimes A \to A$, $F_A(c^{1/p}\otimes a)=ca^p$ is defined, and it is 
a Hopf algebra map in case $A$ is a Hopf algebra. Since $k$ is perfect, we have
\[
\operatorname{Im}(F_A)=A^p\, (:=\{\, a^p\mid a \in A\, \}). 
\]
The isomorphism $\alpha$ restricts to
\[
\alpha^p : B^p \otimes (H_{\operatorname{red}})^p
\overset{\simeq}{\longrightarrow}H^p,
\]
which shows $(H_{\operatorname{red}})^p=(H^p)_{\operatorname{red}}$. 
In addition, $H^p$ is a Hopf subalgebra of $H$, and $\alpha^p$ is a couint-preserving
$(H^p)_{\operatorname{red}}$-colinear isomorphism of $B^p$-algebras, whence 
$B^p=(H^p)^{\operatorname{co}((H^p)_{\operatorname{red}})}$. 
We may suppose $\dim B>1$, which implies $\dim B^p<\dim B$. The induction hypothesis applied to
$B^p$ shows that there exist elements $y_1,\dots, y_s$ of $B^+$ and some positive integers $e_i$ such that
the algebra map
\begin{equation}\label{EisoBp}
k[T^p_1,\dots,T^p_s]/(T_1^{p^{e_1}},\dots, T_s^{p^{e_s}})\to B^p
\end{equation}
induced by $T_i^p\mapsto y_i^p$, $1 \le i \le s$, 
is an isomorphism. Notice that $y_i^{p^{e_i}}=0$, $1\le i\le s$. 
In addition, $y_1,\dots, y_s$ are linearly independent modulo $(B^+)^2$, since
they map through $F_B$ to $y_1^p,\dots, y_s^p$ in $(B^p)^+$ which are linearly independent modulo $((B^p)^+)^2$.
Choose elements $z_{s+1},\dots, z_n$ of $B^+$ which are maximal with respect to the properties
that $z_j^p=0$, $s<  j \le n$, and 
$z_{s+1},\dots, z_n$ are linearly independent modulo $(B^+)^2$. 
It may happen that $\{\, y_i\, \}$ or $\{\, z_j\, \}$ is empty. 

Notice that the assignment
\begin{equation}\label{Eassign}
T_i\mapsto \begin{cases}\ y_i, &1\le i\le s\\ \ z_i, &s<i \le n\end{cases}
\end{equation}
gives rise to an algebra map
\[
k[T_1,\dots,T_n]/(T_1^{p^{e_1}},\dots, T_s^{p^{e_s}}, T_{s+1}^p,\dots, T_n^p)\to B. 
\]
To complete the proof we wish to prove that this is an isomorphism.
Let 
\[
C:= B/(B^p)^+B. 
\]
This is finite-dimensional and local with the maximal ideal $C^+:=B^+/(B^p)^+B$.
The algebra map above, divided by the maximal ideals of the mutually isomorphic
local subalgebras in \eqref{EisoBp}, reduces to
\begin{equation}\label{EtoC}
k[T_1,\dots, T_n]/(T_1^p,\dots, T_n^p)\to C,
\end{equation}
which is induced again by \eqref{Eassign}. 
Since $B$ and $B^p$ are left coideal subalgebras of $H$, it follows by \cite[3.4 Theorem]{MW} that
$H$ is flat, and hence free, over $B$ and over $B^p$. It follows that $B$ is free over $B^p$.
By Nakayama's Lemma we have only to prove that the local algebra map \eqref{EtoC} is an 
isomorphism. Moreover, it suffices to prove that the associated graded algebra map
\begin{equation}\label{EtogrC}
k[T_1,\dots, T_n]/(T_1^p,\dots, T_n^p)\to \operatorname{gr} C
\end{equation}
is an isomorphism, where we suppose $\deg T_i=1$, $1\le i \le n$, and let
\[
\operatorname{gr}C:=\bigoplus_{n=0}^\infty(C^+)^n/(C^+)^{n+1}.
\]
For later use notice from $(B^p)^+B\subset (B^+)^2$ that 
\begin{equation}\label{ECB}
C^+/(C^+)^2=(B^++(B^p)^+B)/((B^+)^2+(B^p)^+B)=B^+/(B^+)^2.
\end{equation}

To prove that the graded algebra map \eqref{EtogrC} is isomorphic, we make 

\begin{claim}
$\operatorname{gr} C$ is, as a graded algebra, in the form $k[T_1,\dots, T_m]/(T_1^p,\dots, T_m^p)$.
\end{claim}

We postpone for a little while to prove the claim. 
By the claim we have only to prove that the map
\eqref{EtogrC} restricted to degree $1$ is isomorphic,
or in other words (in view of \eqref{ECB}), $y_1,\dots, y_s, z_{s+1},\dots, z_n$ form a basis 
of $B^+$ modulo $(B^+)^2$.

To see that they span $B^+$ modulo $(B^+)^2$, let $b \in B^+$. 
Since $b^p$ is a polynomial in the $y_i^p$ with zero constant, and $k$ is perfect,
we have a polynomial $u$ in the $y_i$ with zero constant, such that $b^p=u^p$. 
It follows by choice of $\{ z_j \}$ that $b-u\, (\in B^+)$ is a linear combination of
the $z_j$ modulo $(B^+)^2$, whence $b$ is such of the $y_i$ and the $z_j$. 
To see the linear independence, suppose that a linear combination
$\sum c_iy_i + \sum d_jz_j$ is in $(B^+)^2$.  
Since $z_j^p=0$, we have $\sum c_i^py_i^p$ is in $((B^p)^+)^2$, whence $c_i^p=0$, $1\le i\le s$. This implies
that the coefficients $c_i$ and $d_j$ are all zero. 

It remains to prove the claim. Define quotient Hopf algebras of $H_{\operatorname{red}}$ and of $H$ by
\[
K:=H_{\operatorname{red}}/(H_{\operatorname{red}}^p)^+H_{\operatorname{red}},\quad
J:=H/(H^p)^+H. 
\]
Notice that $K$ is a quotient Hopf algebra of $J$, and let $\mathfrak{a}$ be such that $J/\mathfrak{a}=K$.  
The isomorphisms $\alpha$ and $\alpha^p$ induce a counit-preserving $K$-colinear algebra-isomorphism
\[
C \otimes K \overset{\simeq}{\longrightarrow} J,
\]
from which we see $C=J^{\operatorname{co} K}$ and $C^+J=\mathfrak{a}$.
Construct
\[
\operatorname{gr}J=\bigoplus_{n=0}^\infty\mathfrak{a}^n/\mathfrak{a}^{n+1}.
\]
This is a commutative graded Hopf algebra with neutral component
$(\operatorname{gr}J)(0)=K$. 
The last isomorphism induces a $K$-colinear isomorphism
\[
\operatorname{gr} C \otimes K \overset{\simeq}{\longrightarrow} \operatorname{gr} J
\]
of graded algebras over $\operatorname{gr} C$. 
It follows that $\operatorname{gr} C$ turns into a graded Hopf algebra which is naturally 
isomorphic to the quotient graded Hopf algebra
\[
(\operatorname{gr}J)/K^+(\operatorname{gr}J)
\]
of $\operatorname{gr}J$. This has the properties: (i)~$(\operatorname{gr} C)(0)=k$,\
(ii)~$\operatorname{gr} C$ is generated by $(\operatorname{gr} C)(1)$,\ 
(iii)~the $p$-th power of every element of $(\operatorname{gr} C)(1)$, that is necessarily a primitive
element by (i), is zero. 
Hence we have a graded Hopf algebra surjection 
\[
k[T_1,\dots, T_m]/(T_1^p,\dots, T_m^p)\to \operatorname{gr} C
\]
which is an isomorphism in degree $1$. Here the $T_i$ are supposed to be primitive elements
of degree $1$. All the primitive elements of the source are of degree $1$, and they map isomorphically 
into $\operatorname{gr} C$. 
It follows by \cite[Lemma 11.0.1]{Sw} that the surjection above is injective, and is thus isomorphic, as desired.
\end{proof}

\section{Sample computations of $H_s^2(H,k)$}\label{Sec4}

This section is devoted to sample computations of the $H^2_s(H,k)$ which was the main subject of 
the argument proving Theorem \ref{T1st} (2). 
We thus work in $\mathsf{Vec}$, and let $H$ be a Hopf algebra over a field $k$, as in the preceding section.

\subsection{Augmented cleft extensions}\label{subsec:4.1}
We review from \cite{M3} a trick of the computation, modifying it into the commutative situation.
Here, $\operatorname{char}k$ may be arbitrary.

Fix a non-zero algebra $R$. 
Let $A=(A,\rho_A)$ be an \emph{$H$-cleft extension} \cite[Definition 7.2.1]{Mon} over $R$. 
Thus, $A$ is an $H$-comodule algebra such that
\[
R=A^{\operatorname{co}H}\, (:=\{ \, x \in A\mid \rho(x)=x\otimes 1\, \}),
\]
and there exists an $H$-colinear map $\phi : H\to A$
which is invertible with respect to the convolution product. We may and we do suppose
that such a $\phi$ is unit-preserving, $\phi(1_H)=1_A$, replacing the original $\phi$ 
with $h \mapsto \phi^{-1}(1_H)\phi(h)$,\ $H \to A$.
We have an $R$-linear and $H$-colinear isomorphism,
\begin{equation}\label{ERHA}
R \otimes H \to A,\quad x \otimes h \mapsto x\phi(h).
\end{equation}
This turns into an $H$-colinear $R$-algebra isomorphism, 
if we regard the source $R \otimes H$ as the $H$-crossed product $R \otimes_{\sigma} H$
which is equipped with the product 
\[
(x\otimes h) (y\otimes \ell):=xy\, \sigma(h_{(1)}, \ell_{(1)})\otimes h_{(2)}\ell_{(2)},
\quad x,y \in R,\ \, h, \ell \in H,
\]
given by the (invertible) symmetric Hopf 2-cocycle
\begin{equation*}\label{Esigma}
\sigma : H \otimes H \to R,\quad \sigma(h, \ell):=
\phi(h_{(1)})\phi(\ell_{(1)})\phi^{-1}(h_{(2)}\ell_{(2)});
\end{equation*}
see \cite[Section 7.1, Proposition 7.2.3]{Mon}. 
The unit of $R \otimes_{\sigma} H$ is $1 \otimes 1$
by the property $\phi(1_H)=1_A$. 
By saying $\sigma$ is \emph{symmetric}, we mean that $\sigma(h,\ell)=\sigma(\ell, h)$
for all $h,\ell \in H$.

Suppose that $R=(R,\epsilon_R)$ is augmented with respect to an algebra map $\epsilon_R : R \to k$.
We say that $(A, \epsilon_A)$ is an \emph{augmented $H$-cleft extension} over $R$,
if $A$ is $H$-cleft extension over $R$, and $\epsilon_A : A \to k$ is an algebra map extending $\epsilon_R$;
the inclusion $R \hookrightarrow A$ is thus an augmented algebra map. 
In this case we can and we do choose an invertible $H$-colinear map $\phi : H \to A$ so that
\begin{equation}\label{Ephi}
\mathrm{(i)}~\phi(1_H)=1_A,\quad \mathrm{(ii)}~\epsilon_A\circ \phi=\varepsilon_H.
\end{equation}
In this paper we call such a $\phi$ an \emph{$H$-section}; it is characterized as a unit-preserving $H$-colinear 
section of the $H$-colinear extension $\widetilde{\epsilon}_A =(\epsilon_A\otimes \mathrm{id}_H)\circ \rho_A : A \to H$ of $\epsilon_A$; see \eqref{Etilde}.
For (ii), one may replace a unit-preserving $\phi$ with 
$h\mapsto \epsilon_A(\phi^{-1}(h_{(1)}))\phi(h_{(2)})$,\ $H \to A$. 
One sees that $R \otimes_{\sigma}H$ then turns into an augmented $H$-cleft extension over $R$ with
respect to $\epsilon_R\otimes \varepsilon_H : R\otimes_{\sigma}H\to k$, which is indeed an algebra map.
It is isomorphic to $(A,\epsilon_A)$ through the isomorphism given by \eqref{ERHA}.
Here two augmented $H$-cleft extensions over $R$
are said to be \emph{isomorphic}, if there exists
an $H$-colinear, augmented $R$-algebra morphism (necessarily, isomorphism by \cite[Lemma 1.3]{M0}) between them.

\begin{lemma}\label{Laug}
Let $R^+:=\operatorname{Ker}(\epsilon_R : R \to k)$ denote the augmentation ideal, and suppose
that it is square-zero, $(R^+)^2=0$. Then
\begin{itemize}
\item the augmented $H$-cleft extensions over $R$, and
\item the Hochschild extensions of $H$ by $R^+\otimes H$ in the category $\mathsf{Vec}^H$ of $H$-comodules
\end{itemize}
are in one-to-one correspondence up to isomorphism and equivalence, where $R^+$ is regarded as a trivial $H$-comodule,
and $H$ thus coacts on $R^+\otimes H$ through the right tensor factor. 
\end{lemma}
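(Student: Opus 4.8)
The plan is to set up two mutually inverse assignments between isomorphism classes of augmented $H$-cleft extensions over $R$ and equivalence classes of Hochschild extensions of $H$ by $R^+\otimes H$ in $\mathsf{Vec}^H$, and to show each is well-defined by tracking the underlying symmetric Hochschild $2$-cocycle. For the forward direction I start with an augmented $H$-cleft extension $(A,\epsilon_A)$ over $R$ and form the $H$-colinear extension $\widetilde{\epsilon}_A=(\epsilon_A\otimes\operatorname{id}_H)\circ\rho_A:A\to H$ of $\epsilon_A$ (see \eqref{Etilde}). As a composite of algebra morphisms it is an algebra map, and it is surjective since $A$ is cleft; one checks its kernel is the ideal $R^+A$, which is square-zero because $(R^+A)^2\subseteq (R^+)^2A=0$. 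The kernel, being the kernel of a square-zero extension, carries an intrinsic $H$-module structure, and its $H$-coinvariants are exactly $R^+$, so the Hopf-module identification (Lemma \ref{LHopfm}) canonically presents it as $R^+\otimes H$ with the free structure $h\cdot(r\otimes\ell)=r\otimes h\ell$; thus
\[
0\to R^+\otimes H\to A\xrightarrow{\ \widetilde{\epsilon}_A\ }H\to 0
\]
is a Hochschild extension of $H$ by $R^+\otimes H$ in $\mathsf{Vec}^H$.

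For the reverse direction and for well-definedness I pass to the crossed-product description. Choosing an $H$-section $\phi$ as in \eqref{Ephi} presents $A$ as $R\otimes_\sigma H$ via \eqref{ERHA}, and condition (ii) of \eqref{Ephi} forces $\epsilon_R\circ\sigma=\varepsilon_H\otimes\varepsilon_H$; hence $\sigma=(\varepsilon_H\otimes\varepsilon_H)\,1_R+s$ for a uniquely determined normalized $s:H\otimes H\to R^+$. Because $(R^+)^2=0$, the invertibility of $\sigma$ is automatic, and the crossed-product multiplication, under the vector-space identity $R\otimes H=(R^+\otimes H)\oplus H$, coincides term by term with the product of \eqref{Eumtilde} attached to $s$; moreover the Hopf $2$-cocycle identity for $\sigma$ collapses — again using $(R^+)^2=0$, so that the quadratic terms vanish — to the symmetric Hochschild $2$-cocycle identity for $s$. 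Conversely, given a Hochschild extension of $H$ by $R^+\otimes H$, Proposition \ref{PHochExt}(1) splits it $H$-colinearly and lets me read off such an $s$, and the crossed product $R\otimes_\sigma H$ with $\sigma=(\varepsilon_H\otimes\varepsilon_H)1_R+s$ is an augmented $H$-cleft extension over $R$ recovering the given extension up to equivalence by Proposition \ref{PHochExt}(2) and Remark \ref{R1to1}.

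It remains to check that the two equivalence relations correspond. An isomorphism of augmented $H$-cleft extensions is an $H$-colinear, augmented $R$-algebra isomorphism $f:A\to A'$; being $H$-colinear and augmented it satisfies $\widetilde{\epsilon}_{A'}\circ f=\widetilde{\epsilon}_A$, so $f$ induces the identity on the cokernel $H$, and since $f|_R=\operatorname{id}_R$ together with $(R^+)^2=0$ give $f(r\phi(\ell))=r\phi'(\ell)$, it induces the identity on the kernel $R^+\otimes H$; hence $f$ is an equivalence of the associated Hochschild extensions. The converse is the same computation read backwards (an equivalence is an algebra isomorphism, hence the identity on $k\cdot 1_R$, and the identity on $R^+$, so an $R$-algebra map, and augmented because $\epsilon_A=\varepsilon_H\circ\widetilde{\epsilon}_A$). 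On the cocycle level this matches the gauge-equivalence of Hopf $2$-cocycles, reduced modulo $(R^+)^2$, with the cohomologous relation for the $s$'s, so both classifications are by $H^2_s(H,R^+)_{\mathsf{Vec}}$ and the two assignments are mutually inverse.

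I expect the main obstacle to be the second step: verifying, modulo $(R^+)^2=0$, that the multiplicative Hopf $2$-cocycle condition for $\sigma$ is precisely the additive symmetric Hochschild $2$-cocycle condition for $s$, that the crossed-product multiplication agrees with \eqref{Eumtilde} in every bidegree, and the attendant bookkeeping confirming that gauge-equivalence of $\sigma$, the cohomologous relation for $s$, isomorphism of augmented cleft extensions, and equivalence of Hochschild extensions all coincide under these identifications.
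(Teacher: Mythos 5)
Your proposal is correct, and its forward half is the paper's: the same assignment $(A,\epsilon_A)\mapsto\bigl(0\to R^+\otimes H\to A\xrightarrow{\widetilde{\epsilon}_A}H\to 0\bigr)$, with the kernel identified as $R^+\otimes H$ (you do this via the Hopf-module theorem, Lemma \ref{LHopfm}; the paper reads it off from the crossed-product identification \eqref{ERHA} — both rest on the cleft structure). Where you genuinely diverge is the reverse direction. The paper never rebuilds a crossed product from a cocycle: given a Hochschild extension $0\to R^+\otimes H\to A\xrightarrow{p}H\to 0$ in $\mathsf{Vec}^H$, it shows that the middle term $A$ itself, equipped with $\varepsilon_H\circ p$, is already an augmented $H$-cleft extension over $R$. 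The key observation is that a unit-preserving $H$-colinear section $j$ of $p$ is automatically convolution-invertible, because it is invertible modulo the square-zero ideal $\operatorname{Hom}(H,\operatorname{Ker}p)$ of the convolution algebra $\operatorname{Hom}(H,A)$; together with $A^{\operatorname{co}H}=R^+\oplus k=R$, this makes $A$ cleft with $H$-section $j$. Since neither assignment changes the underlying algebra, the two maps are mutually inverse on the nose and no cocycle computation is needed. Your route instead extracts a normalized symmetric Hochschild $2$-cocycle $s$ from a colinear splitting (note that the splitting provided by Proposition \ref{PHochExt}(1) must first be adjusted to be unit-preserving, a normalization you should state), forms $\sigma=(\varepsilon_H\otimes\varepsilon_H)1_R+s$, and checks that the Hopf $2$-cocycle identity collapses modulo $(R^+)^2$ to the Hochschild identity $\varepsilon(h)s(\ell,m)-s(h\ell,m)+s(h,\ell m)-\varepsilon(m)s(h,\ell)=0$ and that the crossed product realizes \eqref{Eumtilde}; these deferred computations do all go through, and Proposition \ref{PHochExt}(2) with Remark \ref{R1to1} then yields equivalence with the given extension. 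What the paper's argument buys is economy and a reusable fact (colinear sections of square-zero extensions are automatically invertible); what yours buys is the explicit dictionary $\sigma=\varepsilon\otimes\varepsilon+s$ between Hopf and Hochschild $2$-cocycles, which is precisely the mechanism behind \eqref{Ephiphi} and the sample computations of Sections \ref{subsec:sample1}--\ref{subsec:sample2}, at the cost of the bookkeeping you flag. One point worth making explicit in your equivalence-matching step: the embedding $R^+\otimes H\to A$, $r\otimes\ell\mapsto r\phi(\ell)$, is independent of the choice of $H$-section $\phi$, because two sections differ by a map into $\operatorname{Ker}\widetilde{\epsilon}_A=R^+A$ and $R^+\cdot R^+A=0$; this is exactly what makes your identity $f(r\phi(\ell))=r\phi'(\ell)$ valid and the forward assignment canonical rather than $\phi$-dependent.
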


\begin{proof}
To an augmented $H$-cleft extension $(A,\epsilon_A)$ over $R$, there 
corresponds the Hochschild extension
\begin{equation}\label{ER+H}
0 \to R^+ \otimes H \longrightarrow A \overset{\widetilde{\epsilon}_A}{\longrightarrow} H\to 0,
\end{equation}
of $H$ by $R^+\otimes H$ in $\mathsf{Vec}^H$,
where $\widetilde{\epsilon}_A$ denotes, as before, the $H$-colinear extension of $\epsilon_A$. 
To see that this is indeed such an extension, choose an $H$-section, and construct 
the $H$-crossed product $R \otimes_{\sigma} H$ as above.
Through the isomorphism given by \eqref{ERHA}, $\widetilde{\epsilon}_A : A \to H$
is identified with $\epsilon_R \otimes \mathrm{id}_H : R\otimes_{\sigma} H \to H$, whose kernel 
is $R^+\otimes H$, indeed; one also verifies that the $H$-section, which is identified with 
$h\mapsto 1\otimes h$, $H \to R\otimes_{\sigma}H$, is a unit-preserving 
$H$-colinear section of $\widetilde{\epsilon}_A$. 
In addition, if two augmented $H$-cleft extensions,
$(A,\epsilon_A)$ and $(A',\epsilon_{A'})$, over $R$ are isomorphic, 
we have isomorphic square-zero extensions, $(A,\widetilde{\epsilon}_A)$ and $(A',\widetilde{\epsilon}_{A'})$,
of $H$ in $\mathsf{Vec}^H$,
whence the corresponding Hochschild extensions are equivalent. 

Conversely, given a Hochschild extension
\begin{equation}\label{ER+Hi}
0\to R^+ \otimes H \overset{i}{\longrightarrow} A \overset{p}{\longrightarrow} H\to 0
\end{equation}
of $H$ by $R^+\otimes H$ in $\mathsf{Vec}^H$, one sees that a unit-preserving $H$-colinear 
section $j$ of $p$ is necessarily invertible (in $\operatorname{Hom}(H,A)$, since it is so modulo
the square-zero ideal $\operatorname{Hom}(H,R^+A)$).
Moreover, we have $A^{\operatorname{co}H}=R^+\oplus k=R$, and
$(A,\varepsilon_H\circ p)$ is an augmented $H$-cleft extension over $R$ with an $H$-section $j$. 
Clearly, mutually equivalent such Hochschild extensions give rise to mutually 
isomorphic augmented $H$-cleft extensions
over $R$. 

For $(A,\varepsilon_H\circ p)$ as above, the corresponding Hochschild extension is
the given one \eqref{ER+Hi}. 
Conversely, the Hochschild extension in the form \eqref{ER+H} gives rise to $(A, \epsilon_A)$. 
\end{proof}

In view of Proposition \ref{PH2}, it follows that the set of all isomorphism classes of 
the augmented $H$-cleft extensions over $R$ is in one-to-one correspondence with
$H_s^2(H, R^+)$. Since we are interested in the case where $R^+$ is one-dimensional, we
choose as $R$ the algebra 
\[ 
k[\tau]=k[T]/(T^2)
\]
of dual numbers, in which we have set $\tau:=T\, \operatorname{mod}(T^2)$. 
This is augmented with respect to the natural projection
$\epsilon_{k[\tau]} : k[\tau] \to k[\tau]/(\tau)=k$, and
the augmentation ideal $k[\tau]^+=k\tau$ is one-dimensional, indeed.

For this $k[\tau]$ we have the following. 

\begin{prop}[\text{see \cite[Proposition 1.9 (2)]{M3}}]\label{P1to1}
There is a natural one-to-one correspondence between
\begin{itemize}
\item 
the set of all isomorphism classes of 
the augmented $H$-cleft extension over $k[\tau]$, and
\item
$H_s^2(H,k)$. 
\end{itemize}
\end{prop}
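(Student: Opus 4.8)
The plan is to derive the stated bijection by composing the three correspondences already at our disposal, specialized to the algebra $R=k[\tau]$. First I would record that the augmentation ideal $R^+=k[\tau]^+=k\tau$ is square-zero and one-dimensional; regarded as a trivial $H$-comodule it is therefore isomorphic to the one-dimensional trivial comodule $k$. Fixing this identification $k\tau\cong k$ and writing $X=k$, the object $R^+\otimes H=k\tau\otimes H$ is identified with $X\otimes H$, carrying the $H$-comodule structure through the right tensor factor as in Lemma \ref{Laug}.

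With this in place, the argument is a chain of natural bijections. Since $(R^+)^2=0$, Lemma \ref{Laug} identifies the set of isomorphism classes of augmented $H$-cleft extensions over $k[\tau]$ with $\operatorname{Ext}(H,\ R^+\otimes H)_{\mathsf{Vec}^H}$, the set of equivalence classes of Hochschild extensions of $H$ by $R^+\otimes H$ in $\mathsf{Vec}^H$. Proposition \ref{PHochExt}(2), applied in $\mathscr{C}=\mathsf{Vec}$ with the above $X=k$, then puts this $\operatorname{Ext}$-set in natural one-to-one correspondence with the symmetric second Hochschild cohomology $H^2_s(H,\ R^+\otimes H)_{\mathsf{Vec}^H}$ computed in the comodule category. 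Finally, Proposition \ref{PH2} supplies the natural isomorphism
\[
H^2_s(H,\ k)\overset{\simeq}{\longrightarrow}H^2_s(H,\ R^+\otimes H)_{\mathsf{Vec}^H},
\]
using $R^+=k\tau\cong k$. Composing these three correspondences yields the asserted bijection between isomorphism classes of augmented $H$-cleft extensions over $k[\tau]$ and $H^2_s(H,k)$.

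I do not anticipate a genuine obstacle, as all the substantive content has been absorbed into Lemma \ref{Laug} and Propositions \ref{PHochExt} and \ref{PH2}. The only matters requiring attention are bookkeeping ones: checking that the identification of the one-dimensional comodule $k\tau$ with $k$ is used compatibly at each stage, and that the naturality asserted in each cited result is preserved under composition, so that the resulting correspondence is genuinely natural. Both are routine to verify, which is why the statement is phrased as a corollary of the preceding development.
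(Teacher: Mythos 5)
Your proposal is correct and follows exactly the paper's own route: the paper obtains Proposition \ref{P1to1} precisely by combining Lemma \ref{Laug} with Propositions \ref{PHochExt}(2) and \ref{PH2}, specialized to $R=k[\tau]$ with $R^+=k\tau\cong k$ one-dimensional. The only thing the paper adds beyond this chain of bijections is the subsequent explicit description of the correspondence via the $2$-cocycle in \eqref{Ephiphi}, which is not needed for the statement itself.
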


Given an augmented $H$-cleft extension $(A,\epsilon_A)$ over $k[\tau]$, 
choose an $H$-section $\phi : H \to A$. 
We have the Hochschild extension given in \eqref{ER+H}, in which $\widetilde{\epsilon}_A$
has $\phi$ as a unit-preserving $H$-colinear section. 
In view of \eqref{Eumtilde} (see the description of the product $m_{\widetilde{E}}$ on $H \otimes H$),
we see that there uniquely arises a normalized symmetric Hochschild 2-cocycle $s: H \otimes H \to k$ such that
\begin{equation}\label{Ephiphi}
\phi(h)\phi(\ell)=\phi(h\ell)+s(h_{(1)},\ell_{(1)})\tau \, \phi(h_{(2)}\ell_{(2)}), \quad h, \ell \in H. 
\end{equation}
The cohomology class of $s$ in $H^2_s(H,k)$ corresponds to the isomorphism class of $(A,\epsilon_A)$.

\begin{rem}\label{Rnoncom}
For non-commutative $H$-comodule algebras, the notion 
of an \emph{augmented $H$-cleft extension} over $k[\tau]$
is defined without any change. The last cited result \cite[Proposition 1.9 (2)]{M1} tells, in fact, that 
the one-to-one correspondence given in Proposition \ref{P1to1} 
extends to a one-to-one correspondence
between 
\begin{itemize}
\item 
the set of all isomorphism classes of those non-commutative 
augmented $H$-cleft extension over $k[\tau]$ which are \emph{central}, i.e.,
including $k[\tau]$ in their centers, and
\item
the 2nd Hochschild cohomology $H^2(H,k)$ with coefficients in the trivial $H$-module $k$. 
\end{itemize}
\end{rem}

\subsection{Sample 1: $H$ is an example from \cite{T}}\label{subsec:sample1}

In this subsection we suppose $\operatorname{char}k=p>0$, and 
let $H$ be the Hopf algebra given in \cite[Example 1.7]{T}, denoted by $\widetilde{A}$. 
Thus, $H$ is, as an algebra, generated by an $\infty$-sequence of elements,
$y_1, y_2, \dots, y_i,\dots$, and is defined by the
relations
\[ 
y_1^p=y_2^{p^2}=\cdots =y_i^{p^i}=\cdots. 
\]
This is indeed a Hopf algebra, in which $y_i$ are all primitive, $\Delta_H(y_i)=y_i\otimes 1+1\otimes y_i$. 
It is shown in the cited example that $H_{\operatorname{red}}$ is a smooth Hopf algebra, and
the projection $H \to H_{\operatorname{red}}$ with nil (but not bounded nil) kernel does not 
split as an algebra map. 

One sees that $H$ has a linear basis, 
\[
y_1^{e_1}y_2^{e_2}\cdots y_i^{e_i}\cdots,
\]
where the exponents $e_i$ range so that 
\begin{equation}\label{Ebasis}
\text{(i)}\ e_1\ge 0,\ \, \text{(ii)}\ 0\le e_i<p^i\ \text{for}\ i\ge 2,\ \, \text{and}\ \, \text{(iii)}\ e_i=0\ \text{for}\ i\gg 0. 
\end{equation}

\begin{prop}\label{Psample1}
We have an isomorphism 
\begin{equation}\label{EH2s}
H_s^2(H, k)\simeq k^{\mathbb{N}}
\end{equation}
of vector spaces, 
where $k^{\mathbb{N}}=\prod_{i=0}^{\infty}k$ denotes the direct product of countably infinitely many 
copies of $k$. 
\end{prop}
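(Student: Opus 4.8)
The plan is to avoid direct cocycle bookkeeping by exploiting the natural isomorphism $H_s^2(H,k)\cong D^1(H\mid k,k)$ between the symmetric $2$nd Hochschild cohomology and the $1$st André--Quillen cohomology with trivial coefficients (see \cite[Exercise 8.8.4]{Wei}), which was already used in the proof of Proposition~\ref{Pres}. Since $D^1(H\mid k,-)$ is computable from any presentation of $H$ by a polynomial algebra, the idea is simply to read off the answer from the given generators-and-relations description of $H$. Throughout, $k$ is the trivial module $H/H^+$, so $\varepsilon_H(y_i)=0$.

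Concretely, I would first write $H=P/I$, where $P=k[Y_1,Y_2,\dots]$ is the polynomial algebra on countably many indeterminates and $I=(r_1,r_2,\dots)$ is generated by the elements $r_i:=Y_i^{p^i}-Y_{i+1}^{p^{i+1}}$, $i\ge 1$, encoding the defining relations $y_1^p=y_2^{p^2}=\cdots$. Because $P$ is smooth over $k$ (even in infinitely many variables its cotangent complex is $\Omega_{P/k}$ in degree $0$), the Jacobi--Zariski exact sequence for $k\to P\to H$ collapses, using $D^0(H\mid P,k)=0$ and $D^1(H\mid P,k)=\operatorname{Hom}_H(I/I^2,k)$ (the complex $L_{H/P}$ has $H_0=\Omega_{H/P}=0$, $H_1=I/I^2$), to
\[
0\to D^0(H\mid k,k)\to \operatorname{Der}_k(P,k)\xrightarrow{\ \partial\ } \operatorname{Hom}_H(I/I^2,k)\to D^1(H\mid k,k)\to 0 .
\]
Thus $H_s^2(H,k)\cong\operatorname{coker}(\partial)$, and it remains to compute the target of $\partial$ and the map itself.

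For the target I would argue that $r_1,r_2,\dots$ is a \emph{regular sequence} in $P$: their initial (lowest-degree) forms $Y_1^{p},Y_2^{p^2},\dots$ are monomials in pairwise distinct variables, hence a regular sequence, and this lifts to the $r_i$. Consequently the conormal module $I/I^2$ is free over $H$ with basis $\{\bar r_i\}_{i\ge 1}$, so $\operatorname{Hom}_H(I/I^2,k)\cong\prod_{i\ge 1}k=k^{\mathbb N}$. For the connecting map, a derivation $d\in\operatorname{Der}_k(P,k)$ is sent to $\bar r_i\mapsto d(r_i)$; but $d(Y_i^{p^i})=p^i\,Y_i^{p^i-1}d(Y_i)=0$ since the scalar $p^i$ vanishes in characteristic $p$, whence $d(r_i)=0$ for every $i$ and $\partial=0$. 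Therefore $H_s^2(H,k)\cong\operatorname{coker}(\partial)=k^{\mathbb N}$, as claimed.

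The main obstacle I anticipate is the rigorous verification that the $r_i$ form a regular sequence (equivalently, that $I/I^2$ is $H$-free on the $\bar r_i$) in the non-Noetherian, infinitely-generated setting; this is handled by localizing to finitely many variables, where the initial-form criterion for regular sequences applies, together with the fact that cotangent complexes and $\Omega$ commute with the relevant filtered colimits, so that the André--Quillen formalism and the collapse of Jacobi--Zariski remain valid for $H$ not of finite type. As a hands-on cross-check that matches the theme of this section, I would also realize each class explicitly: for $(c_i)\in k^{\mathbb N}$ the augmented $H$-cleft extension over $k[\tau]$ given by $k[\tau][Y_1,Y_2,\dots]/(Y_i^{p^i}-Y_{i+1}^{p^{i+1}}-c_i\tau\,:\,i\ge 1)$ deforms the $i$-th relation, and one verifies through \eqref{Ephiphi} and Proposition~\ref{P1to1} that these extensions exhaust $H_s^2(H,k)$ and are pairwise non-isomorphic, recovering $k^{\mathbb N}$.
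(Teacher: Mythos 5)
Your argument is correct in substance, but it takes a genuinely different route from the paper. The paper's own proof stays entirely inside the cleft-extension formalism of Section \ref{subsec:4.1}: for each $\boldsymbol{c}\in k^{\mathbb{N}}$ it constructs the augmented $H$-cleft extension $A_{\boldsymbol{c}}$ over $k[\tau]$ with relations $z_i^{p^i}=z_{i+1}^{p^{i+1}}+c_i\tau$ (your final ``cross-check'' paragraph is exactly this construction), uses the Diamond Lemma to produce $k[\tau]$-free bases, proves two claims (distinct $\boldsymbol{c}$ give non-isomorphic extensions, and every augmented $H$-cleft extension over $k[\tau]$ is isomorphic to some $A_{\boldsymbol{c}}$), and then verifies by a cocycle argument that the resulting bijection with $H_s^2(H,k)$ is linear. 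You instead combine the identification $H_s^2(H,k)\cong D^1(H\mid k,k)$ (which the paper itself invokes in the proof of Proposition \ref{Pres}) with the Jacobi--Zariski sequence for $k\to P\to H$, reducing the whole computation to the conormal module of the presentation ideal $I=(r_1,r_2,\dots)$. Your route is shorter, gets linearity of the isomorphism for free (everything is a map of cohomology groups), and works uniformly for any Hopf algebra presented by relations in $\mathfrak{m}^2$ forming a regular sequence; the paper's route produces explicit representatives (the extensions $A_{\boldsymbol{c}}$ and the cocycles $s_{\boldsymbol{c}}$), which is precisely the expository point of Section \ref{Sec4} and is reused in Section \ref{subsec:sample2}.

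One step needs sharpening. The criterion ``lowest-degree forms regular $\Rightarrow$ sequence regular'' is a statement about the local ring at the origin (or its completion), not about the polynomial ring globally: for instance $x-x^2$ and $y(1-x)$ in $k[x,y]$ have regular initial forms $x,y$, yet $x\cdot y(1-x)\equiv 0$ modulo $(x-x^2)$ while $x\not\equiv 0$. For your purposes this is harmless, for two independent reasons. First, since the coefficients are the trivial module, one has $\operatorname{Hom}_H(I/I^2,k)=\operatorname{Hom}_k(I/\mathfrak{m}I,k)$ with $\mathfrak{m}=(Y_1,Y_2,\dots)$, so only the behaviour at the augmentation matters, and there the initial-form criterion applies after passing to finitely many variables. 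Second, for this particular sequence global regularity can be checked directly: modulo $(r_1,\dots,r_j)$ one has $r_{j+1}\equiv Y_1^p-Y_{j+2}^{p^{j+2}}$, and the quotient (whose monomial basis the Diamond Lemma provides) is a free module over the polynomial subring $k[Y_1,Y_{j+2},Y_{j+3},\dots]$, in which this element is a nonzerodivisor. Also note that $\partial=0$ holds for a more basic reason than the characteristic: each $r_i$ lies in $\mathfrak{m}^2$, and every derivation into $k$ kills $\mathfrak{m}^2$. With either fix, your conclusion $H_s^2(H,k)\cong\operatorname{coker}(\partial)=\operatorname{Hom}_H(I/I^2,k)\cong k^{\mathbb{N}}$ stands.
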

\begin{proof}
Given an $\infty$-sequence $\boldsymbol{c}=(c_1,c_2,\dots,c_i,\dots)$ of scalars $c_i\, (\in k)$, let
$A_{\boldsymbol{c}}$ denote the $k[\tau]$-algebra which is generated by $z_1, z_2, \dots, z_i,\dots$,
and is defined by the relations
\[
z_i^{p^i}=z_{i+1}^{p^{i+1}}+c_i\tau,\quad i=1,2, \dots. 
\]
A simple use of Bergman's Diamond Lemma \cite[Section 10.3]{B} shows that $A_{\boldsymbol{c}}$ has a $k[\tau]$-free basis,
\[
z_1^{e_1}z_2^{e_2}\cdots z_i^{e_i}\cdots,
\]
where the exponents $e_i$ range just as in \eqref{Ebasis}. 
This $A_{\boldsymbol{c}}$ turns into an $H$-comodule algebra with respect to the coaction 
$\rho_{\boldsymbol{c}} : A_{\boldsymbol{c}}\to A_{\boldsymbol{c}}\otimes H$ defined by
\[
\rho_{\boldsymbol{c}}(z_i)=z_i\otimes 1 +1\otimes y_i,\quad i=1,2,\dots. 
\]
Equip $A_{\boldsymbol{c}}$ with the algebra map $\epsilon_{\boldsymbol{c}} : A_{\boldsymbol{c}}\to k$
defined by
\[
\epsilon_{\boldsymbol{c}}(\tau)=0\quad \text{and}\quad
\epsilon_{\boldsymbol{c}}(z_i)=0,\ \, i=1,2,\dots.
\]
Define a linear map $\phi_{\boldsymbol{c}} : H\to A_{\boldsymbol{c}}$ by
\begin{equation*}\label{Ephic}
\phi_{\boldsymbol{c}}(y_1^{e_1}y_2^{e_2}\cdots y_i^{e_i}\cdots)=z_1^{e_1}z_2^{e_2}\cdots z_i^{e_i}\cdots,
\end{equation*}
where the $e_i$ range so as in \eqref{Ebasis}. 
One sees that $\phi_{\boldsymbol{c}}$ is $H$-colinear,
and it satisfies the conditions (i)--(ii) in \eqref{Ephi}. In particular, it satisfies (i), and 
is, therefore, invertible by \cite[Lemma 9.2.3]{Sw}, since $H$
is irreducible, i.e., $1_H$ spans the coradical of $H$. 
Since the $k[\tau]$-linear extension 
\begin{equation}\label{Ektau}
k[\tau]\otimes H\to A_{\boldsymbol{c}},\quad x \otimes a \mapsto x\phi_{\boldsymbol{c}}(a). 
\end{equation}
of $\phi_{\boldsymbol{c}}$ is an $H$-colinear isomorphism, 
we have $A_{\boldsymbol{c}}^{\operatorname{co} H}=k[\tau]$. Thus, 
$A_{\boldsymbol{c}}=(A_{\boldsymbol{c}},\epsilon_{\boldsymbol{c}})$ is an augmented $H$-cleft extension over $k[\tau]$ with 
an $H$-section $\phi_{\boldsymbol{c}}$.

\begin{claim}
Choose two $\infty$-sequences $\boldsymbol{c}$ and $\boldsymbol{c}'$. 
If $A_{\boldsymbol{c}}$ and $A_{\boldsymbol{c}'}$ are isomorphic as augmented $H$-cleft extensions over $k[\tau]$, then $\boldsymbol{c}=\boldsymbol{c}'$.
\end{claim}

Indeed, suppose that $f : A_{\boldsymbol{c}'}\to A_{\boldsymbol{c}}$ is an isomorphism. This $f$ decomposes
uniquely into a composition
\[
A_{\boldsymbol{c}'}\overset{\simeq}{\longrightarrow}  k[\tau]\otimes H \overset{\simeq}{\longrightarrow} A_{\boldsymbol{c}},
\]
where the second isomorphism is \eqref{Ektau}. The first is necessarily in the form
\[
a \, (\in A_{\boldsymbol{c}'}) \mapsto (g \otimes \operatorname{id}_H)\circ \rho_{\boldsymbol{c}'}(a),
\]
where $g : A_{\boldsymbol{c}'}\to k[\tau]$ is a $k[\tau]$-linear map which is unit-preserving and augmented. 
So, we have $g(z'_i) =d_i\tau$ for some $d_i \in k$, and 
\[
f(z'_i)=z_i + d_i\tau,\quad i=1,2,\dots,
\]
where $z'_i$ denote the generators of $A_{\boldsymbol{c}'}$ analogous to the $z_i$ of $A_{\boldsymbol{c}}$. 
It follows that 
\[
f((z'_i)^{p^i} - (z'_{i+1})^{p^{i+1}})=(z_i + d_i\tau)^{p^i}-(z_{i+1}+d_{i+1}\tau)^{p^{i+1}},\quad i=1,2,\dots,
\]
which reduce to $c'_i\tau=c_i\tau$, or $\boldsymbol{c}'=\boldsymbol{c}$. This proves the claim.

\begin{claim}
Every augmented $H$-cleft extension over $k[\tau]$ is isomorphic to $A_{\boldsymbol{c}}$
for some $\boldsymbol{c}\in k^{\mathbb{N}}$.
\end{claim}

To prove this, let $(A,\epsilon_A)$ be an augmented $H$-cleft extension over $k[\tau]$. Choose
an $H$-section $\phi : H \to A$, and set 
\[
w_i:=\phi(y_i),\quad i=1,2,\dots. 
\]
Notice $\epsilon_A(w_i)=0$. By a simple computation one sees $w_i^{p^i}-w_{i+1}^{p^{i+1}}\in A^{\operatorname{co}H}=k[\tau]$, and that this last element is killed by $\epsilon_A$. It follows that
\[
w_i^{p^i}-w_{i+1}^{p^{i+1}}=c_i\tau, \quad i=1,2,\dots
\]
for some $c_i \in k$. Set $\boldsymbol{c}:=(c_1,c_2,\dots,c_i,\dots)\, (\in k^{\mathbb{N}})$. 
Then we see that
\[
h(z_i)=w_i,\quad i=1,2,\dots
\]
defines an $H$-colinear, augmented $k[\tau]$-algebra map $h : A_{\boldsymbol{c}}\to A$,
which is necessarily an isomorphism by 
\cite[Lemma 1.3]{M0}. This proves the claim. 

From the two claims above and Proposition \ref{P1to1} we obtain 
a one-to-one correspondence between the two sets in \eqref{EH2s}.
To $\boldsymbol{c}=(c_1,c_2,\dots,c_i,\dots)$ in $k^{\mathbb{N}}$, 
there corresponds the cohomology class of the normalized 2-cocycle 
$s_{\boldsymbol{c}} : H \otimes H \to k$ which arises from the $H$-section $\phi_{\boldsymbol{c}}$
through the relation \eqref{Ephiphi}. It satisfies
\begin{itemize}
\item[(i)]\ $s(y_i,y_i^r)=0$\ \, for\ \, $0\le r < p^i-1$, 
\item[(ii)]\ $s(y_i,y_i^{p^i-1})-s(y_{i+1},y_{i+1}^{p^{i+1}-1})=c_i$,
\end{itemize}
where $i=1,2,\dots$. Notice that
$s(y_i^r, y_i^t)=s(y_i, y_i^{r+t-1})$ for any positive integers $r$, $t$, since the 2-cocycles 
are naturally identified with the linear maps $S^2_H(H^+) \to k$; see the proof of Proposition
\ref{PH2C}. 
If a 2-cocycle $s$ has the properties (i)--(ii) above, then the corresponding 
augmented $H$-cleft extension over $k[\tau]$ is isomorphic to the $A_{\boldsymbol{c}}$, as is seen 
from the argument proving the last claim.
It follows that two 2-cocycles satisfying (i)--(ii) in common 
are cohomologous to each other. Therefore, we see
\[
s_{\boldsymbol{c}}+s_{\boldsymbol{c}'}\sim
s_{\boldsymbol{c}+\boldsymbol{c}'}\quad \text{and}\quad \lambda s_{\boldsymbol{c}}
\sim s_{\lambda \boldsymbol{c}},\ \, \lambda \in k, 
\]
where $\sim$ indicates ``cohomologous". This proves that the obtained one-to-one correspondence is
indeed a linear isomorphism. 
\end{proof}

\subsection{Sample 2: $H$ is a group algebra $kG$}\label{subsec:sample2}
We continue to suppose  $\operatorname{char}k=p>0$.
Recall from Proposition \ref{Pres} that
the restriction map $H_s^2(H,k) \to H_s^2(J,k)$ is surjective, and this follows from the result proved under the
assumption that $H$ is of finite type.
Here we wish to verify the result directly by explicit computations, when $H=kG$ is a finite-type commutative group algebra and $J$ is its Hopf (indeed, group) subalgebra. 

The symmetric 2nd Hochschild cohomology $H_s^2(kG,k)$ (resp., the 2nd Hochschild cohomology $H^2(kG,k)$)
is linearly isomorphic to the symmetric part of the 2nd group cohomology $H^2(G,k)_{\mathrm{sym}}$ 
(resp., the full 2nd group cohomology $H^2(G,k)$). 
Therefore, the computational results below may be standard, or even 
known, for experts in group theory. But our method of computations, which uses augmented cleft extensions
possessing linear structure and depends on the Diamond Lemma, would be useful. 

Let
$G$ be a finitely generated abelian group. 
Decompose $G$ into the direct sum $G=\bigoplus_{\alpha}G_\alpha$, where 
$\alpha$ runs through $\{\, \infty,\ \text{all pimes}\, \}$, 
of the torsion-free part $G_\infty$ and the $t$-primary parts $G_t$ for primes $t$. 
We have
\begin{equation}\label{Egrcoh}
H_s^2(kG,k)=\bigoplus_{\alpha}H_s^2(kG_\alpha, k),\ \,  \text{and}\ \, 
H_s^2(kG_\alpha, k)=0\ \text{if}\ \alpha\ne p,
\end{equation}
in view that $kG_{\alpha}$ is smooth unless $\alpha =p$. 
Therefore, we may and we do suppose that
$G$ is a non-trivial finite abelian $p$-group.
We present the operation on $G$ by addition. Let $o(g)$ denote the order
of $g \in G$, which is thus some power of $p$.

Choose a system $x_1,\dots,x_q$ of non-zero generators of $G$ such that 
\begin{equation}\label{Egl1}
G=\mathbb{Z}x_1\oplus\cdots \oplus \mathbb{Z}x_q.
\end{equation}
The number $q$ does not depend of choice of the systems.

\begin{prop}\label{Psample2}
We have a linear isomorphism
\begin{equation}\label{Eglisom}
H_s^2(kG, k)\simeq k^q.
\end{equation}
\end{prop}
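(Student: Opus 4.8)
The plan is to transcribe the cleft-extension method of Sample 1 (Proposition \ref{Psample1}), invoking Proposition \ref{P1to1} to identify $H_s^2(kG,k)$ with the set of isomorphism classes of augmented $kG$-cleft extensions over the dual numbers $k[\tau]$. I would first fix the local presentation $kG=k[y_1,\dots,y_q]/(y_1^{p^{n_1}},\dots,y_q^{p^{n_q}})$ with $y_i:=x_i-1$ and $p^{n_i}:=o(x_i)$; then $\varepsilon_{kG}(y_i)=0$, the classes of $y_1,\dots,y_q$ form a basis of $kG^+/(kG^+)^2$ (so that $q$ is exactly the embedding dimension, matching \eqref{Egl1}), and each generator satisfies $\Delta(y_i)=y_i\otimes1+1\otimes y_i+y_i\otimes y_i$, i.e. $1+y_i=x_i$ is grouplike. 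This non-primitivity is the only essential difference from Sample 1.

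Next, for $\boldsymbol{c}=(c_1,\dots,c_q)\in k^q$ I would let $A_{\boldsymbol{c}}$ be the commutative $k[\tau]$-algebra on generators $z_1,\dots,z_q$ with relations $z_i^{p^{n_i}}=c_i\tau$; Bergman's Diamond Lemma \cite[Section 10.3]{B} then furnishes the $k[\tau]$-free basis $\{z_1^{e_1}\cdots z_q^{e_q}:0\le e_i<p^{n_i}\}$. Declaring $1+z_i$ to be grouplike-type, $\rho_{\boldsymbol{c}}(1+z_i)=(1+z_i)\otimes x_i$ with $\tau$ coinvariant, defines a coassociative, counital algebra coaction: it respects $z_i^{p^{n_i}}=c_i\tau$ precisely because $x_i^{p^{n_i}}=1$ makes $z_i^{p^{n_i}}$ coinvariant. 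With $\epsilon_{\boldsymbol{c}}(z_i)=\epsilon_{\boldsymbol{c}}(\tau)=0$ and the $H$-colinear map $\phi_{\boldsymbol{c}}(y_1^{e_1}\cdots y_q^{e_q})=z_1^{e_1}\cdots z_q^{e_q}$ one checks conditions \eqref{Ephi}. Unlike Sample 1, convolution-invertibility of $\phi_{\boldsymbol{c}}$ cannot come from irreducibility, since $kG$ is cosemisimple; instead $\phi_{\boldsymbol{c}}$ reduces to $\mathrm{id}_H$ modulo the square-zero ideal $\tau A_{\boldsymbol{c}}$ and is therefore invertible, exactly as argued inside the proof of Lemma \ref{Laug}. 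The $k[\tau]$-linear extension of $\phi_{\boldsymbol{c}}$ being an $H$-colinear isomorphism $k[\tau]\otimes H\to A_{\boldsymbol{c}}$, we get $A_{\boldsymbol{c}}^{\mathrm{co}H}=k[\tau]$, so $(A_{\boldsymbol{c}},\epsilon_{\boldsymbol{c}})$ is an augmented $H$-cleft extension over $k[\tau]$.

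The two matching claims then run as in Sample 1. For injectivity, an isomorphism $f\colon A_{\boldsymbol{c}'}\to A_{\boldsymbol{c}}$ of such extensions has $f(z_i')=z_i+(\text{a }\tau\text{-multiple})$; raising to the $p^{n_i}$-th power, and using $\tau^2=0$ together with $\operatorname{char}k=p$ to annihilate every correction term, yields $c_i'\tau=f((z_i')^{p^{n_i}})=c_i\tau$, hence $\boldsymbol{c}=\boldsymbol{c}'$. For surjectivity, given any augmented cleft extension $(A,\epsilon_A)$ with $H$-section $\phi$, set $w_i:=\phi(y_i)$; then $1+w_i$ is grouplike-type, so $(1+w_i)^{p^{n_i}}=1+w_i^{p^{n_i}}$ is coinvariant, while $\epsilon_A(w_i)=\varepsilon_H(y_i)=0$ forces $w_i^{p^{n_i}}=c_i\tau$ for a unique $c_i$, and $z_i\mapsto w_i$ is an $H$-colinear augmented $k[\tau]$-algebra map $A_{\boldsymbol{c}}\to A$, hence an isomorphism by \cite[Lemma 1.3]{M0}. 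With Proposition \ref{P1to1} this gives a bijection $k^q\to H_s^2(kG,k)$.

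The last and most laborious step is to promote this bijection to the linear isomorphism \eqref{Eglisom}. As in Sample 1, I would read the normalized $2$-cocycle $s_{\boldsymbol{c}}$ off \eqref{Ephiphi}, recover the scalar $c_i$ from $s_{\boldsymbol{c}}$ by expanding $c_i\tau=\phi_{\boldsymbol{c}}(y_i)^{p^{n_i}}$ through the cocycle identity, show any cocycle carrying the same data is cohomologous to $s_{\boldsymbol{c}}$, and verify $s_{\boldsymbol{c}}+s_{\boldsymbol{c}'}\sim s_{\boldsymbol{c}+\boldsymbol{c}'}$ and $\lambda s_{\boldsymbol{c}}\sim s_{\lambda\boldsymbol{c}}$. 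I expect the genuine friction to lie here: because the $y_i$ are not primitive, the expansion of $\phi_{\boldsymbol{c}}(y_i)^{p^{n_i}}$ via \eqref{Ephiphi} carries the extra $y_i\otimes y_i$ terms of $\Delta(y_i)$, making the cocycle bookkeeping heavier than in Sample 1. As an independent check one may sidestep the cocycles altogether: $kG=k[y]/(y_i^{p^{n_i}})$ is a finite complete intersection (Corollary \ref{Clci}), and since $\partial(y_i^{p^{n_i}})/\partial y_j=0$ in characteristic $p$ the conormal map vanishes, so $H_s^2(kG,k)\cong D^1(kG\mid k,k)\cong\operatorname{Hom}_{kG}(I/I^2,k)\cong k^q$, confirming \eqref{Eglisom}.
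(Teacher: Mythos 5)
Your proposal is correct, and its core is the paper's own proof written in shifted coordinates: under $z_i \mapsto z_i - 1$, the paper's model algebras $\mathcal{A}_{\boldsymbol{a}}$ (relations $z_i^{o(x_i)}=1+a_i\tau$, coaction $z_i\mapsto z_i\otimes x_i$, augmentation $z_i\mapsto 1$) become exactly your $A_{\boldsymbol{c}}$ (relations $z_i^{p^{n_i}}=c_i\tau$, $1+z_i$ grouplike-type, $z_i\mapsto 0$), and even the sections coincide, since the paper's $\phi(x^n)=(1+z)^n$ expands binomially to your $\phi(y^e)=z^e$. Beyond this there are two real points of divergence. First, convolution-invertibility: you correctly note that Sample 1's irreducibility argument is unavailable ($kG$ is cosemisimple) and substitute the reduction-modulo-square-zero argument from the proof of Lemma \ref{Laug}, which is valid; in the paper's coordinates one can argue even more directly, since a colinear map out of a group algebra is convolution-invertible as soon as every grouplike maps to a unit, and $z_i$ is a unit because $z_i^{o(x_i)}=1+a_i\tau$ is one. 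Second, the ``genuine friction'' you anticipate in the linearity step is largely an artifact of your nilpotent coordinates: the paper evaluates the cocycle on grouplikes, where \eqref{Ephiphi} collapses to $\phi(g)\phi(h)=(1+s(g,h)\tau)\phi(gh)$ and the defining data (the analogues of conditions (i)--(ii) of Proposition \ref{Psample1}) are visibly linear in $s$, so the Sample 1 argument transcribes with essentially no extra bookkeeping. Finally, your closing Andr\'{e}--Quillen computation is a genuinely different route and is in fact a complete proof by itself: $H_s^2(kG,k)\cong D^1(kG\mid k,k)$ (the identification already invoked in Proposition \ref{Pres}), the conormal map vanishes because every partial $\partial(y_i^{p^{n_i}})/\partial y_j$ is zero in characteristic $p$, and $I/I^2$ is free of rank $q$ since the $y_i^{p^{n_i}}$ form a regular sequence, giving $k^q$ with no cocycle bookkeeping whatsoever. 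What that shortcut does not provide are the explicit models $\mathcal{A}_{\boldsymbol{a}}$ and their explicit sections, which the paper reuses immediately afterwards to compute the restriction map in Proposition \ref{Pgrres}; this is why the cleft-extension construction, and not the cohomological shortcut, is the proof the paper needs.
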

\begin{proof}
Given an element $\boldsymbol{a}=(a_1,\dots,a_q)$ of $k^q$, define $\mathcal{A}_{\boldsymbol{a}}$ to
be the $k[\tau]$-algebra which is generated by $z_1,\dots,z_q$, and is defined by the
relation
\[
z_i^{o(x_i)}=1+a_i\tau,\quad 1\le i\le q.
\]
By the Diamond Lemma \cite[Section 10.3]{B}, this has a $k[\tau]$-free basis,
\[
z_1^{n_1}\cdots z_q^{n_q},\quad 1\le n_i <o(x_i),\ 1 \le i \le q.
\]
Equipped with the algebra map $\mathcal{A}_{\boldsymbol{a}}\to k$ determined by $\tau \mapsto 0$ and
$z_i \mapsto 1$, $1 \le i \le q$, $\mathcal{A}_{\boldsymbol{a}}$ turns into an augmented
$kG$-cleft extension over $k[\tau]$ with respect to the coaction given by 
\begin{equation}\label{Erhoa}
\rho_{\boldsymbol{a}} : \mathcal{A}_{\boldsymbol{a}}\to 
\mathcal{A}_{\boldsymbol{a}}\otimes kG,\quad \rho_{\boldsymbol{a}}(z_i)= z_i \otimes x_i,\ 1 \le i \le q.
\end{equation}
Indeed, a $kG$-section is given by the linear map 
$\phi_{\boldsymbol{a}} : kG \to \mathcal{A}_{\boldsymbol{a}}$ which sends each element
$x_1^{n_1}\cdots x_q^{n_q}$ of $G$, where $1\le n_i <o(x_i)$,\ $1 \le i \le q$, to the $k[\tau]$-free basis
element $z_1^{n_1}\cdots z_q^{n_q}$ of $\mathcal{A}_{\boldsymbol{a}}$. The associated (see \eqref{Ephiphi}) symmetric Hochschild 2-cocyle
$s : kG \otimes kG \to k$ is seen to have the properties
\begin{itemize}
\item[(i)]\ $s(x_i,x_i^t)=0$\ \, for\ \, $0\le t < o(x_i)-1$, 
\item[(ii)]\ $s(x_i,x_i^{o(x_i)-1})=a_i$,
\end{itemize}
where $1\le i \le q$. An argument analogous to the one of proving Proposition \ref{Psample1} shows
that every augmented cleft $kG$-cleft extension over $k[\tau]$ is isomorphic to 
some $\mathcal{A}_{\boldsymbol{a}}$,
where $\boldsymbol{a}\, (\in k^q)$ is uniquely determined, and the resulting one-to-one correspondence
$H_s^2(kG,k)\simeq k^q$ is a linear isomorphism. 
\end{proof}

Let $F$ be a non-trivial subgroup of $G$. 
Choose a system $y_1,\dots,y_r$ of non-zero generators of $F$ such that 
\begin{equation*}\label{Egl2}
F=\mathbb{Z}y_1\oplus\cdots \oplus \mathbb{Z}y_r,
\end{equation*}
similar to \eqref{Egl1}. 
Then one can present uniquely the elements $y_i$ so that
\[ 
y_i= \sum_{j=1}^q c_{ij}x_j,\quad \ 0\le c_{ij}<o(x_j),\ c_{ij}\in \mathbb{Z},
\]
where $1 \le i \le r$. Note that
\[
o(y_i)=\max\{ \, o(c_{ij}x_j)\mid 1 \le j\le q \, \}.
\]

\begin{lemma}
The two systems of the generators can be re-chosen so that
\begin{itemize}
\item[(i)] $c_{ij}=0$ if $i > j$;
\item[(ii)] $o(y_i)=o(c_{ii}x_i)$,\ $1\le i\le r$;
\item[(iii)] $o(y_1)\ge \cdots\ge o(y_r)$.
\end{itemize}
\end{lemma}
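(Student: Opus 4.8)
The plan is to prove the statement by induction on the number $q$ of cyclic summands of $G$, peeling off one matched pair $(x_1,y_1)$ of generators at each stage (the base case $q=1$, where $G$ and hence $F$ are cyclic, being trivial). To initialize the staircase I would first choose $y_1\in F$ of maximal order, say $o(y_1)=p^s$; since $p^s$ is then the exponent of $F$, every element of $F$ has order dividing $p^s$, and automatically $o(y_1)\ge o(w)$ for all $w\in F$. This maximality is exactly what makes the ordering condition (iii) fall out of the induction.

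Next I would locate a summand of $G$ that \emph{witnesses} the order of $y_1$. Expanding $y_1=\sum_j m_jx_j$ in the given decomposition $G=\mathbb{Z}x_1\oplus\cdots\oplus\mathbb{Z}x_q$ and using that the order of an element of a direct sum of $p$-groups equals the maximum of the orders of its components, we get $o(y_1)=\max_j o(m_jx_j)$. I would pick an index attaining this maximum and relabel the summands so that it becomes the first one; then $o(m_1x_1)=o(y_1)$, which is precisely condition (ii) for $i=1$ (with $c_{11}=m_1$). Setting $G':=\mathbb{Z}x_2\oplus\cdots\oplus\mathbb{Z}x_q$, the aim is to show $F=\mathbb{Z}y_1\oplus(F\cap G')$: a generating system $y_2,\dots,y_r$ of $F\cap G'\subseteq G'$ then has vanishing $x_1$-coordinate, which supplies the first column of condition (i), and the pair $F\cap G'\subseteq G'$ may be fed back into the induction, its ambient group having $q-1$ summands.

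The one point that needs genuine verification — and the step I expect to be the crux — is this internal splitting, which I would establish through the projection $\pi_1\colon G\to\mathbb{Z}x_1$ along $G'$. On one hand $\pi_1(y_1)=m_1x_1$ has order $p^s=o(y_1)$, so $ty_1\in G'$ forces $t\pi_1(y_1)=0$, whence $p^s\mid t$ and $ty_1=0$; thus $\mathbb{Z}y_1\cap G'=0$. On the other hand $\pi_1(F)$ is a subgroup of the cyclic group $\mathbb{Z}x_1$ of exponent at most $p^s$, hence cyclic and contained in the unique order-$p^s$ subgroup of $\mathbb{Z}x_1$; as it already contains $\pi_1(y_1)$ of order $p^s$, it equals $\langle\pi_1(y_1)\rangle$. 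Therefore any $w\in F$ satisfies $\pi_1(w)=t\pi_1(y_1)=\pi_1(ty_1)$ for some $t$, so $w-ty_1\in F\cap G'$ and $w\in\mathbb{Z}y_1+(F\cap G')$, giving $F=\mathbb{Z}y_1\oplus(F\cap G')$. Comparing $\mathbb{F}_p$-dimensions of $F/pF$ shows $F\cap G'$ has rank $r-1$, so the induction produces exactly $y_2,\dots,y_r$ with $o(y_1)\ge o(y_2)\ge\cdots$. Finally I would reduce every coefficient $c_{ij}$ modulo $o(x_j)$ into the range $[0,o(x_j))$, which alters neither the vanishing in (i) nor the orders in (ii); reassembling the peeled-off pairs then yields bases satisfying (i)--(iii).
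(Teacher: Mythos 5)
Your proposal is correct and follows essentially the same route as the paper, which proves the lemma by induction on $q$ and notes that the $x_j$ need only be re-numbered, never re-chosen — exactly what your relabeling-plus-peeling argument does. Your write-up simply fills in the details the paper leaves as "easily proved," with the key splitting $F=\mathbb{Z}y_1\oplus(F\cap G')$ via the projection $\pi_1$ carried out correctly.
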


This is easily proved by induction on $q$. As for $x_1,\dots, x_q$, it suffices to re-number them
without re-choosing the elements.

\begin{rem}
By (i) we necessarily have $r \le q$.
By (ii), for each $1\le i\le q$, $o(y_i)$ divides $o(x_i)$, and $c_{ii}$ is a multiple of $o(x_i)/o(y_i)$
by some non-zero integer, say, $m_i$ that is prime to $p$. 
We can and we do re-choose $y_1,\dots,y_r$ so that $m_i=1$, or namely,
\begin{itemize}
\item[(iv)] $c_{ii} = o(x_i)/o(y_i),\quad 1\le i \le r$.
\end{itemize}
We have only to replace each $y_i$ with $\ell_i y_i$, where $\ell_i$ is an integer
such that $\ell_i m_i\equiv 1$ mod $o(x_i)$. 
\end{rem}

\begin{prop}\label{Pgrres}
The restriction map 
\[
\mathrm{res} : H_s^2(kG, k)\to H_s^2(kF, k)
\]
is surjective.
\end{prop}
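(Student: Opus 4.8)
The plan is to make both sides explicit through the linear isomorphisms $H_s^2(kG,k)\simeq k^q$ and $H_s^2(kF,k)\simeq k^r$ furnished by Proposition \ref{Psample2} (the latter applied to $F$ with the generators $y_1,\dots,y_r$), and then to compute the matrix of $\operatorname{res}$ in these coordinates. I expect it to be, on its leading $r\times r$ block, upper triangular with $1$'s on the diagonal; since $r\le q$ this forces surjectivity at once.

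First I would record a coordinate-free reading of the $F$-isomorphism: for a normalized symmetric $2$-cocycle $s$ on $kF$, the $i$-th coordinate of its class is $b_i=\sum_{t=1}^{o(y_i)-1} s(y_i,y_i^{t})$. Indeed, for the normal-form representative produced in Proposition \ref{Psample2} every summand but the last vanishes and the last equals $b_i$; and the sum is a cohomology invariant, because for a normalized $1$-cochain $f$ a telescoping computation gives $\sum_{t=1}^{o(y_i)-1}(\delta f)(y_i,y_i^{t})=o(y_i)\,f(y_i)$, which vanishes in $k$ as $o(y_i)$ is a power of $p=\operatorname{char}k$. This is exactly the place where the positive-characteristic hypothesis enters.

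Then I would evaluate this invariant on the restricted cocycle by working inside the cleft extension $\mathcal{A}_{\boldsymbol{a}}$ attached to $\boldsymbol{a}=(a_1,\dots,a_q)$, rather than computing $s$ on arbitrary pairs of group elements. Put $w_i:=\phi_{\boldsymbol{a}}(y_i)$. Using condition (i) of the Lemma ($c_{ij}=0$ for $i>j$) and the definition of $\phi_{\boldsymbol{a}}$, one has $w_i=z_i^{c_{ii}}\prod_{j>i} z_j^{c_{ij}}$, and since $\phi_{\boldsymbol{a}}(y_i^{o(y_i)})=\phi_{\boldsymbol{a}}(1)=1$, relation \eqref{Ephiphi} gives $w_i^{o(y_i)}=1+\bigl(\sum_{t=1}^{o(y_i)-1} s(y_i,y_i^{t})\bigr)\tau=1+b_i\tau$. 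On the other hand $\mathcal{A}_{\boldsymbol{a}}$ is commutative with $\tau^2=0$ and $z_j^{o(x_j)}=1+a_j\tau$; condition (iv) gives $c_{ii}\,o(y_i)=o(x_i)$, while (ii) forces $o(x_j)\mid c_{ij}\,o(y_i)$ for $j>i$, say $c_{ij}\,o(y_i)=k_{ij}\,o(x_j)$ with $k_{ij}\ge 0$. Hence
\[
w_i^{o(y_i)}=\prod_{j\ge i} z_j^{\,c_{ij}o(y_i)}=(1+a_i\tau)\prod_{j>i}(1+a_j\tau)^{k_{ij}}=1+\Bigl(a_i+\sum_{j>i}k_{ij}a_j\Bigr)\tau,
\]
so comparing the two expressions for $w_i^{o(y_i)}$ yields $b_i=a_i+\sum_{j>i}k_{ij}a_j$.

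This identifies $\operatorname{res}$ in coordinates as $b_i=a_i+\sum_{j>i}k_{ij}a_j$ for $1\le i\le r$; its leading $r\times r$ block is upper triangular with unit diagonal, hence invertible, so $\operatorname{res}$ is surjective. The main obstacle I anticipate is the bookkeeping around the re-chosen generators: one must invoke (i)--(iv) precisely to pin down both the expression $w_i=z_i^{c_{ii}}\prod_{j>i}z_j^{c_{ij}}$ and the divisibility $o(x_j)\mid c_{ij}\,o(y_i)$, and one must carefully justify that the invariant $\sum_t s(y_i,y_i^{t})$ genuinely computes the target coordinate $b_i$, which again rests on $o(y_i)\equiv 0$ in $k$.
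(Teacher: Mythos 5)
Your proof is correct and takes essentially the same route as the paper's: you build the same matrix (your $k_{ij}$ are the paper's $t_{ij}=c_{ij}\,o(y_i)/o(x_j)$), compute $w_i^{o(y_i)}$ in $\mathcal{A}_{\boldsymbol{a}}$ in the same two ways, and conclude by the same upper-triangular-with-unit-diagonal argument. The only cosmetic differences are that you justify reading off the $F$-coordinates via the telescoping invariant $\sum_{t}s(y_i,y_i^t)$ (using $o(y_i)\equiv 0$ in $k$), where the paper instead passes through the sub-cleft-extension $\mathcal{B}=\rho_{\boldsymbol{a}}^{-1}(\mathcal{A}_{\boldsymbol{a}}\otimes kF)$ with section $\phi_{\boldsymbol{a}}|_{kF}$, and that the divisibility $o(x_j)\mid c_{ij}\,o(y_i)$ for $j>i$ really comes from $o(y_i)=\max_j o(c_{ij}x_j)$ (noted before the Lemma) rather than from condition (ii), a harmless misattribution.
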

\begin{proof}
Recall from the proof of Proposition \ref{Psample2} that every augmented $kG$-cleft extension over $k[\tau]$ is in the form
$\mathcal{A}_{\boldsymbol{a}}$, where $\boldsymbol{a}\in k^q$. 
It has the $kG$-section $\phi_{\boldsymbol{a}} : kG \to \mathcal{A}_{\boldsymbol{a}}$.
Define an augmented subalgebra of 
$\mathcal{A}_{\boldsymbol{a}}$ by
\[
\mathcal{B}:=\rho_{\boldsymbol{a}}^{-1}(\mathcal{A}_{\boldsymbol{a}}\otimes kF),
\]
where $\rho_{\boldsymbol{a}}$ denotes the coaction \eqref{Erhoa} on $\mathcal{A}_{\boldsymbol{a}}$.
As a general fact, this is a $kF$-comodule algebra with respect to the coaction 
$\rho_{\boldsymbol{a}}|_{\mathcal{B}}$. Moreover, $\mathcal{B}$ is the augmented $kF$-cleft
extension over $k[\tau]$, with a $kF$-section $\phi_{\boldsymbol{a}}|_{kF} : kF \to \mathcal{B}$. 
Clearly, the assignment $\mathcal{A}_{\boldsymbol{a}}\mapsto \mathcal{B}$ 
is identified with the restriction of their associated 
(Hopf, and also Hochschild) 2-cocycles. 

For $1\le i\le r$, let 
\[
w_i:=\phi_{\boldsymbol{a}}(y_i)=z_i^{c_{ii}}z_{i+1}^{c_{i,i+1}}\cdots z_q^{c_{iq}}\, (\in \mathcal{B}). 
\]
We have an isomorphism $H_s^2(kF,k)\simeq k^r$ which is analogous 
to \eqref{Eglisom}, and is now determined by the system $y_1,\dots,y_r$ of generators.
It sends the $\mathcal{B}$, naturally identified with a cohomology class, to 
the element $\boldsymbol{b}=(b_1,\dots,b_r)$ of $k^r$ determined by 
\begin{equation}\label{Ew1}
w_i^{o(y_i)}=1+b_i\tau,\quad 
1\le i\le r. 
\end{equation}
To present $\boldsymbol{b}$ explicitly, define an $r\times q$ matrix $T=\begin{pmatrix} t_{ij}\end{pmatrix}$ by
\[
t_{ij}=\begin{cases} c_{ij}\frac{o(y_i)}{o(x_j)}, &\text{if}\ i\le j;\\
0, &\text{otherwise}. \end{cases}
\]
Notice that $T$ is an integral matrix such that 
the $r \times r$ submatrix $T'$ of $T$ consisting of the first $r$ columns 
is an upper triangular with diagonal entries $1$ by (iv), whence it has an integral inverse matrix.
We see 
\[
(z_j^{c_{ij}})^{o(y_i)}=1+t_{ij}a_j\tau, \quad i\le j, 
\]
whence
\begin{equation}\label{Ew2}
w_i^{o(y_i)}= 1+\sum_{j=1}^q t_{ij}a_j\tau,\quad 1\le i\le r.
\end{equation}
As an additional remark, in case $i<j$, the integer $t_{ij}$ is divided by $p$ if and only if $o(y_i)>o(c_{ij}x_j)$,
whence the terms $t_{ij}a_j\tau$ in $j$ such that $o(y_i)>o(c_{ij}x_j)$ are zero.
Now, from \eqref{Ew1}--\eqref{Ew2} we see ${}^t\boldsymbol{b}
=T\, {}^t\! \boldsymbol{a}$. It follows that the restriction map in question is identified,
through the relevant isomorphisms, with the linear map $k^q \to k^r$ (between vector spaces of column vectors) 
given by the matrix $T$ with entries specialized to $k$. 
Since the matrix has full rank by the invertibility of $T'$, 
the last linear map and thus the restriction map are surjective.
\end{proof}

Notice from \eqref{Egrcoh} that the conclusion
of Proposition \ref{Pgrres} holds, 
more generally, when $G$ is a finitely generated abelian group, and $F$ is its subgroup. 

Recall Remark \ref{Rnoncom}, and notice the restriction map extends to that 
\[
\operatorname{res} : H^2(kG, k) \to H^2(kF, k)
\]
of (non-symmetric) 2nd Hochschild cohomologies. 
To conclude this section we show that this extended restriction map can be non-surjective.

\begin{prop}
The extended restriction map above is not surjective under the situation where
$G=\mathbb{Z}x_1\oplus \mathbb{Z}x_2$ is a finite group of order $p^3$ and $F=\mathbb{Z}y_1\oplus \mathbb{Z}y_2$ is its subgroup of order $p^2$ such that
\[
o(x_1)=p^2,\ \, o(x_2)=p,\ \, y_1=px_1,\ \, y_2=x_2.
\]
\end{prop}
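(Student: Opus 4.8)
The plan is to locate the obstruction in the \emph{commutator} (alternating-form) part of the second cohomology, which is exactly what distinguishes the full $H^2(kG,k)$ from its symmetric part $H^2_s(kG,k)$, and then to observe that this part cannot be restricted onto $kF$ because $y_1=px_1$ contributes a factor $p$ that dies in characteristic $p$. Throughout I use the identification, recalled before the statement, of $H^2(kG,k)$ with the full second group cohomology $H^2(G,k)$ for the trivial additive module $k$ (and likewise for $F$), together with the description of classes by central augmented cleft extensions over $k[\tau]$.

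First I would set up the commutator pairing. A class $c\in H^2(kG,k)$, represented by a central extension with lifts $\widetilde{g}$ of $g\in G$, determines $\beta_G(c)(g,h)=\widetilde{g}\,\widetilde{h}\,\widetilde{g}^{-1}\widetilde{h}^{-1}\in 1+k\tau$, an alternating $\mathbb{Z}$-bilinear form, i.e.\ a homomorphism $\wedge^2 G\to k$. This gives a natural short exact sequence
\[
0 \to H^2_s(kG,k)\to H^2(kG,k)\xrightarrow{\ \beta_G\ }\operatorname{Hom}(\wedge^2 G,\,k)\to 0,
\]
whose kernel is the symmetric part (the abelian extensions) and whose surjectivity follows by realizing any alternating form as the antisymmetrization of a bilinear $2$-cocycle. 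The same holds for $F$, and $\beta$ is natural in the group, so that $\beta_F\circ\operatorname{res}=\rho\circ\beta_G$, where $\rho:\operatorname{Hom}(\wedge^2 G,k)\to\operatorname{Hom}(\wedge^2 F,k)$ is induced by $\wedge^2\iota$ for the inclusion $\iota:F\hookrightarrow G$.

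Next I would compute $\wedge^2\iota$. Both exterior squares are cyclic of order $p$, generated by $x_1\wedge x_2$ and $y_1\wedge y_2$ respectively, and
\[
y_1\wedge y_2=(p\,x_1)\wedge x_2=p\,(x_1\wedge x_2)=0\in\wedge^2 G,
\]
so $\wedge^2\iota=0$ and hence $\rho=0$. Therefore every restricted class satisfies $\beta_F(\operatorname{res}(c))=\rho(\beta_G(c))=0$; concretely, writing the $kF$-section as $w_1=z_1^{p}$, $w_2=z_2$ inside a central $kG$-cleft extension with generators $z_1,z_2$ and $[z_1,z_2]=1+b\tau$, the restricted commutator is $[w_1,w_2]=[z_1^{p},z_2]=[z_1,z_2]^{p}=(1+b\tau)^{p}=1$. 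On the other hand $\operatorname{Hom}(\wedge^2 F,k)\simeq k\neq 0$, so by surjectivity of $\beta_F$ there is a class in $H^2(kF,k)$ with nonzero commutator form — equivalently, the central $kF$-cleft extension $\mathcal{W}$ over $k[\tau]$ on $w_1,w_2$ with $w_1^p=w_2^p=1$ and $w_1w_2=w_2w_1(1+\tau)$, which one checks is well defined by the Diamond Lemma. Such a class lies outside the image of $\operatorname{res}$, so the extended restriction map is not surjective.

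The main obstacle is the careful setup of the commutator pairing and its naturality under restriction; once that is in place, the decisive input is the purely arithmetic vanishing $p\,(x_1\wedge x_2)=0$ (equivalently $[z_1,z_2]^p=1$) in characteristic $p$. It is worth contrasting this with Proposition \ref{Pgrres}: the symmetric restriction $H^2_s(kG,k)\to H^2_s(kF,k)$ is surjective, so the failure detected here is carried entirely by the non-symmetric, commutator part — precisely the summand that Proposition \ref{Psample2} does not see.
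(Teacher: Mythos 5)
Your proof is correct, and it reaches the conclusion by a genuinely different, more structural route than the paper's. The paper argues by direct contradiction with a concrete witness: it takes the central $kF$-cleft extension $B_b$ (generators $w_1,w_2$; relations $w_1^p=w_2^p=1$, $w_2w_1=(1+b\tau)w_1w_2$), supposes it arises by restriction from a central $kG$-cleft extension $A$ with section $\phi$, puts $z_1:=\phi(x_1)$, $z_2:=\phi(x_2)$, notes $z_1^p=(1+c_1\tau)w_1$ and $z_2=(1+c_2\tau)w_2$, and then computes $z_2z_1^p=(1+c_3\tau)^pz_1^pz_2=z_1^pz_2$, forcing $b=0$. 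Your decisive computation is literally the same one, $[z_1^p,z_2]=[z_1,z_2]^p=(1+b\tau)^p=1$, but you package it as naturality of the commutator pairing in the exact sequence $0\to H^2_s(kG,k)\to H^2(kG,k)\to\operatorname{Hom}(\wedge^2G,\,k)\to 0$, so that non-surjectivity follows from the purely arithmetic vanishing $y_1\wedge y_2=p\,(x_1\wedge x_2)=0$, i.e.\ $\wedge^2\iota=0$. What your framework buys: it identifies exactly where the obstruction lives --- the image of $\operatorname{res}$ is trapped in the symmetric part, which together with Proposition \ref{Pgrres} shows the failure is carried entirely by the alternating summand, as you note --- and it generalizes at once to any subgroup $F\subset G$ of a finitely generated abelian group for which the induced map $\operatorname{Hom}(\wedge^2G,k)\to\operatorname{Hom}(\wedge^2F,k)$ fails to be surjective. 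What the paper's route buys: it is elementary and self-contained, using only the cleft-extension formalism and the Diamond Lemma already set up in that section, and it does not need the exactness of your sequence --- in particular not the surjectivity of the antisymmetrization map $\beta_F$, which your argument does require and which you correctly justify by antisymmetrizing a bilinear cocycle (valid for finitely generated abelian groups, where the form $\omega(x_i\wedge x_j)$ is automatically killed by $\gcd(o(x_i),o(x_j))$). One cosmetic remark: your phrase ``writing the $kF$-section as $w_1=z_1^p$'' is slightly loose, since a section of the restricted extension need only agree with $z_1^p$ up to a central factor in $1+k\tau$ (the paper's $z_1^p=(1+c_1\tau)w_1$); but the commutator form is insensitive to central unit factors, so your computation is unaffected.
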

\begin{proof}
Choose arbitrarily an element $b \in k$, and let $B_b$ be the non-commutative $k[\tau]$-algebra 
which is generated by $w_1$, $w_2$, and 
is defined by the relations
\[
w_1^p=w_2^p=1,\quad w_2w_1=(1+b\tau)w_1w_2.
\]
Then we see by using the Diamond Lemma that $B_b$ has a $k[\tau]$-free basis, $w_1^{n_1}w_2^{n_2}$, where $0\le n_i<p,\ i=1,2$,
and is indeed a non-commutative augmented, central $kF$-cleft extension over $k[\tau]$ (see Remark \ref{Rnoncom}) with respect to 
the algebra map $B_b\to k$,\ $\tau \mapsto 0$, $w_i \mapsto 1$ and the coaction $B_b\to B_b \otimes kF$,\ 
$w_i\mapsto w_i\otimes y_i$, where $i=1,2$.  Suppose that this $B_b$ arises by restriction 
from a non-commutative augmented, central $kG$-cleft extension, say, $A$ over $k[\tau]$. 
Choose a $kG$-section $\phi : kG \to A$, and let $z_1:=\phi(x_1)$,\
$z_2:=\phi(x_2)$. Then we see that
$A$ includes $B_b$ so that
\[
z_1^p= (1+c_1\tau)w_1,\quad z_2=(1+c_2\tau)w_2
\]
for some $c_1, c_2 \in k$. 
But, since $z_2z_1=(1+c_3\tau)z_1z_2$ for some $c_3\in k$, it follows that
\[
\begin{split}
(1+(c_1+c_2)\tau)w_2w_1&=z_2z_1^p=(1+c_3\tau)^pz_1^pz_2\\
&=z_1^pz_2=(1+(c_1+c_2)\tau)w_1w_2,
\end{split}
\]
whence we must have $b=0$.  Therefore, if $b \ne 0$, then $B_b$ does not arise from any non-commutative augmented, 
central $kG$-cleft extension over $k[\tau]$; this shows the desired non-surjectivity.   
\end{proof}

\section{Proof of Theorem \ref{T3rd}}\label{Sec5}

This section is divided into two subsections, according to the two parts of the theorem.

\subsection{Proof of Part 1, characteristic zero case}\label{subsec5:charzero}

We work in $\mathsf{sVec}$. We do not assume $\operatorname{char}k=0$ until explicitly mentioned. 
Recall that every object $X$ of $\mathsf{sVec}$ decomposes uniquely into the direct sum $X=X_0\oplus X_1$
of the even component $X_0$ and the odd component $X_1$. The objects $X$ such that $X=X_0$ 
form a symmetric monoidal full subcategory of $\mathsf{sVec}$ isomorphic to $\mathsf{sVec}$.

Let $H$ be a Hopf
algebra in $\mathsf{sVec}$, and let $\mathfrak{G}=\operatorname{Spec}H$ denote the corresponding affine group scheme.
There are naturally associated 
\begin{equation}\label{EHbar}
\overline{H}:=H/H_1H\quad \text{and}\quad W^H:=(T^*_{e}(\mathfrak{G}))_1.
\end{equation}
Here, $\overline{H}=H/H_1H$ denotes the quotient of $H$ divided by the
Hopf ideal generated by the odd component $H_1$ of $H$, which is characterized as
the largest ordinary quotient Hopf algebra of $H$. On the other hand, 
$W^H=(T^*_{e}(\mathfrak{G}))_1$ denotes the odd component 
of the cotangent super-vector space $T^*_{e}(\mathfrak{G})=H^+/(H^+)^2$
of $\mathfrak{G}$ at the identity $e$. 
By Theorem 4.5 of \cite{M2} (proved when $\operatorname{char}k\ne 2$)
we have an algebra isomorphism
\begin{equation}\label{Etenpr}
H\overset{\simeq}{\longrightarrow}\wedge(W^H) \otimes \overline{H}
\end{equation}
in $\mathsf{sVec}$, 
which can be chosen so as to be counit-preserving and $\overline{H}$-colinear, in addition; 
refer to \cite[Remark 5.9]{M4} for a more acceptable proof. The cited proofs prove, in fact, the opposite-sided version of the isomorphism above.
In characteristic $\operatorname{char} k=2$ (recall that then, the commutativity additionally requires every odd element to be
square-zero), the isomorphism follows from \cite[Theorem 5.7]{MS}, if one assumes that $H$ is of finite type.
It can be generalized, removing the assumption, by following the above cited proofs.

Let $R$ be an algebra in $\mathsf{sVec}$ with an ideal $I$. 
Since every odd element of $I$ is square-zero, it follows that $I$ is nil if and only if the even component
$I_0$ of $I$ is a nil ideal of the ordinary algebra $R_0$. 

In view of \eqref{Etenpr}, it suffices for our purpose to prove that an (ordinary) algebra map 
$\overline{H}\to (R/I)_0=R_0/I_0$ can
lift to some algebra map $\overline{H}\to R_0$, since an algebra morphism $\wedge(W^H)\to R/I$,
being restricted to a linear map $W^H\to (R/I)_1$ that can lift to some linear map
$W^H \to R_1$, can lift to the algebra morphism $\wedge(W^H) \to R$ which uniquely extends the last linear map.
A conclusion is that we may (and we do) work in $\mathsf{Vec}$. 

Now, assume $\operatorname{char}k=0$.
Given an (ordinary) algebra $R$ with a nil ideal $I$, an (ordinary) Hopf algebra $H$ and an algebra map $f:H \to R/I$,
we have the same pull-back diagram as \eqref{Epb}, in which 
$\varpi: T \to H$ is now an algebra surjection with nil kernel. 
It suffices to prove that this $\varpi$ has a section, whose composition with
$\omega$ is seen to be a desired lift. 

Consider a pair $(J, s)$ of a Hopf subalgebra $J$ of $H$, and a \emph{section} $s : J \to \varpi^{-1}(J)$,
by which we mean a section of the algebra surjection $\varpi|_{\varpi^{-1}(J)}: \varpi^{-1}(J)\to J$.
An example is the trivial $(k, \mathrm{id}_k)$. Introduce a natural order to the (non-empty) set of all those pairs,
by defining $(J_1,s_1)\le (J_2,s_2)$ if $J_1\subset J_2$ and $s_1=s_2|_{J_1}$. By Zorn's Lemma we have
a maximal pair, say, $(J_{\max}, s_{\max})$. It remains to prove $J_{\max}=H$.
On the contrary we assume $J_{\max}\subsetneqq H$, and wish to show that there exists a pair properly greater than $(J_{\max},s_{\max})$; the contradiction implies the desired equality. 
One can choose a finite-type Hopf subalgebra $K$ of $H$ which is not included in $J_{\max}$.  
Define $J:=J_{\max}\cap K$. Then, $J$ is a Hopf subalgebra of $K$, which is of finite type by 
\cite[Corollary 3.11]{Tak}. 
One point of the present proof is: we have a canonical isomorphism 
\begin{equation}\label{EKJ}
J_{\max}\otimes_J K\simeq J_{\max}K,\quad x \otimes_K y \mapsto xy 
\end{equation}
by \cite[Proposition 6]{T0}. Notice that $s_{\max}|_J : J \to \varpi^{-1}(J)$ is a section. 

We claim that 
this $s_{\max}|_J$ extends to some section $K \to \varpi^{-1}(K)$. This can be restated
so that $s_{\max}|_J$ extends to some $J$-algebra section
$K \to \varpi^{-1}(K)$, with $\varpi^{-1}(K)$ regarded as a $J$-algebra through $s_{\max}|_J$.
Since $K$ is of finite type as a $J$-algebra, there exists a finite-type $J$-subalgebra, say, $A$ of $\varpi^{-1}(K)$
such that $\varpi|_A : A \to K$ maps onto $K$. Let $Q:=K/J^+K$ be the quotient Hopf algebra 
of $K$ which corresponds to $J$. We have
a $K$-algebra isomorphism similar to \eqref{EHJQ},
\begin{equation*}\label{EKJQ}
K\otimes_J K \overset{\simeq}{\longrightarrow} K \otimes Q,\quad
x \otimes_K y \mapsto xy_{(1)}\otimes y_{(2)}\, \operatorname{mod}(J^+K),
\end{equation*}
where the source is regarded as a $K$-algebra through 
$K =K\otimes_J J \hookrightarrow K\otimes_J K$. 
Another point of the proof is: the Hopf algebra $Q$ is smooth
since $\operatorname{char} k=0$; see Theorem \ref{T1st} (1). 
By the base change property of smoothness, $K\otimes_JK\, (\simeq K\otimes Q)$ is smooth over $K$,
i.e., smooth in the category of $K$-algebras.
It follows by \cite[Proposition 5.1]{MOT} that $K$ is smooth (indeed, $Q$-smooth) over $J$. 
Notice that the kernel of $\varpi|_A : A \to K$ is finitely generated, whence it is a nilpotent ideal. 
Then we see that there exists a desired $J$-algebra section, say, $t : K \to \varpi^{-1}(K)$
which maps into $A$. 

In view of \eqref{EKJ}, we see that $s_{\max}$ and $t$ amount to a section
\[
J_{\max}K \to \varpi^{-1}(J_{\max}K), \quad xy\mapsto s_{\max}(x)t(y),
\]
which together with $J_{\max}K$ form a pair properly greater than $(J_{\max},s_{\max})$.
This completes the proof of Part 1 of the theorem.

\subsection{Proof of Part 2, positive characteristic case}\label{subsec5:charp}

Assume $\operatorname{char}k=p>0$. Let $\mathscr{C}$ be $\mathsf{sVec}$ or 
$\mathsf{Ver}_p^{\mathrm{ind}}$. Recall from Section \ref{S1.4} the additional assumption 
posed when $\mathscr{C}=\mathsf{Ver}_p^{\mathrm{ind}}$ that $p\ge 5$, and $k$ is algebraically closed. 

Suppose $\mathscr{C}=\mathsf{Ver}_p^{\mathrm{ind}}$. Let us recall from 
Ostrik \cite{O} and Venkatesh \cite{Ven} some basics on it. 
This is a semisimple abelian category which contains precisely $p-1$ simple objects
including the unit object $\mathbf{1}$. (Compare with the fact that 
$\mathsf{sVec}$ contains precisely two simples, i.e., the even and the odd 1-dimensional 
super-vector spaces.)
By \cite[Lemma 2.20]{Ven} the symmetric $p$-th
power $S^p(L)$ of every simple object $L$ other than $\mathbf{1}$ equals zero. Given an object $X$ of $\mathsf{Ver}_p^{\mathrm{ind}}$, 
let $X_0$ (resp., $X_{\ne 0}$) denote the sub-object of $X$ which is the sum of all simple
sub-objects that are (resp., are not) 
isomorphic to $\mathbf{1}$. Then we have $X=X_0\oplus X_{\ne 0}$. 
All the objects $X$ such that $X=X_0$ (resp., $X$ includes only simples that are isomorphic to either
$\mathbf{1}$ or another specific one, the $L_{p-1}$ in \cite{O} or \tc{\cite{Ven})}
form a symmetric monoidal full subcategory of $\mathsf{Ver}_p^{\mathrm{ind}}$ which is isomorphic to
$\mathsf{Vec}$ (resp., $\mathsf{sVec}$). 

Given an algebra $R$ in $\mathsf{Ver}_p^{\mathrm{ind}}$ with an ideal $I$, $R_0$ is an ordinary subalgebra of $R$, and $I_0$ is an ideal of $R_0$. We see that
$I$ is bounded nil if and only if $I_0$ is so. This is true for algebras and their ideals in $\mathsf{sVec}$, as well. 

Let us be given a Hopf algebra $H$ in $\mathsf{Ver}_p^{\mathrm{ind}}$, and the corresponding affine group scheme
$\mathfrak{G}=\operatorname{Spec}H$. The analogous objects to those in \eqref{EHbar} are
\begin{equation}\label{EHbar1}
\overline{H}:=H/H_{\ne 0}H\quad \text{and}\quad W^H:=(T^*_{e}(\mathfrak{G}))_{\ne 0}. 
\end{equation}
The former $\overline{H}$ is an ordinary Hopf algebra.
Theorem 5.1 of \cite{M4} proves that there is a (counit-preserving $\overline{H}$-colinear) 
algebra isomorphism 
\begin{equation}\label{Etenpr1}
H\overset{\simeq}{\longrightarrow}S(W^H) \otimes \overline{H},
\end{equation}
which is analogous to \eqref{EHbar1}.
Here, $S(W^H)$ denotes the symmetric algebra on $W^H$; it is an analogue 
of the exterior algebra $\wedge(W^H)$ in $\operatorname{sVec}$, which is, indeed, the symmetric algebra in 
the category. We remark that the cited theorem proves, in fact, the opposite-sided version of the isomorphism above,
replacing $S(W^H)$ with another isomorphic algebra $\Gamma(W^H)$. 

Corollary 1.6 of \cite{T} proves that if $H$ is a Hopf algebra 
in $\mathsf{Vec}$ for which Condition (f) is satisfied, or namely, the Frobenius map is injective,
then it has the stronger smoothness as claimed.  Since such an $H$ is smooth by Theorem \ref{T1st} (2),
the desired result holds for ordinary Hopf algebras. 
This, together with \eqref{Etenpr}, \eqref{Etenpr1}, proves the desired result in general, 
as is seen from the argument of the fourth paragraph of the last subsection.

\appendix

\section{A simple proof of triviality of torsors over an algebraically closed field}\label{appendixA}

In this appendix we work in $\mathsf{Vec}$, except in Remark \ref{RTan}.

Wibmer \cite{Wib} gave an elementary proof of the fact that a torsor under an affine group scheme over an algebraically
closed field is necessarily trivial; this fact is the same as the uniqueness of the so-called Tannakian groups over a field such as above. We give a (hopefully) simpler proof of the triviality of torsors,
which uses the same idea as was used to prove Theorem \ref{T3rd} (1); see Section \ref{subsec5:charzero}. 

Suppose that $k$ is an arbitrary field, and $\mathfrak{G}=\operatorname{Spec}H$ is an affine group scheme
represented by a Hopf algebra $H$. 
Every torsor under $\mathfrak{G}$ is necessarily affine, and it is uniquely represented 
by an $H$-comodule algebra $A$, which is a \emph{twisted 
form} of $H$, i.e., $A$ becomes isomorphic to $H$ after some base field extension. Such an $A$ is precisely
what is called an $H$-\emph{Galois extension} over $k$; it is by definition a non-zero $H$-comodule
algebra $A=(A, \rho_A)$ such that the $A$-linear extension
\begin{equation}\label{Ebeta}
A \otimes A \to A \otimes H,\quad a\otimes b\mapsto a\rho_A(b)
\end{equation}
of the structure map $\rho_A :A\to A\otimes H$ is an isomorphism (of $H$-comodule algebras); 
the isomorphism, specialized through
an algebra map $A \to L$ to some extension field $L$ of $k$,  
shows that $A$ is a twisted form of $H$, and conversely, for such a twisted form,
the map above is isomorphic since it turns so after some base field extension.  
Therefore, the above-mentioned triviality is translated as follows, and in fact, we
are going to give a simple proof of this. 

\begin{prop}\label{Ptor}
Suppose that $k$ is an algebraically closed field, and let $H$ be a Hopf algebra. Then an $H$-Galois extension 
over $k$ is necessarily trivial, i.e., it is isomorphic to $H$ as an $H$-comodule algebra. 
\end{prop}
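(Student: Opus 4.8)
The plan is to reduce the triviality of $A$ to the existence of a single algebra map $\epsilon_A : A \to k$, and then to produce such a map by a Zorn's-lemma argument over the finite-type Hopf subalgebras of $H$, exactly parallel to the proof of Theorem~\ref{T3rd}(1), with the algebraic closedness of $k$ (the Nullstellensatz) playing the role that smoothness of the quotient $Q$ played there. For the reduction, suppose $\epsilon_A : A \to k$ is an algebra map and form $\theta := (\epsilon_A \otimes \mathrm{id}_H)\circ \rho_A : A \to H$; it is a morphism of $H$-comodule algebras, being a composite of the algebra maps $\rho_A$ and $\epsilon_A \otimes \mathrm{id}_H$, and it is $H$-colinear by construction. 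To see that $\theta$ is an isomorphism I would use the Galois map \eqref{Ebeta}: writing $\beta^{-1}(1_A \otimes h) = h^{[1]}\otimes h^{[2]}$, one checks that $\kappa : H \to A$, $\kappa(h) = \epsilon_A(h^{[1]})\,h^{[2]}$, is a two-sided inverse of $\theta$. This is the standard fact that a Galois extension with a rational point is trivial, and the verification is a routine Sweedler-index computation. Everything thus comes down to exhibiting a $k$-point of $A$.

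To find $\epsilon_A$, set $A_J := \rho_A^{-1}(A \otimes J)$ for each Hopf subalgebra $J \subseteq H$; this is the $J$-comodule subalgebra of $A$ consisting of elements whose coaction lands in $A \otimes J$, as in the construction of $\mathcal{B}$ in Section~\ref{subsec:sample2}. Since each $\rho_A(a)$ is a finite tensor, $A = \bigcup_J A_J$ over the finite-type Hopf subalgebras $J$, while $A_H = A$ and $A_k = A^{\operatorname{co}H}$, which equals $k$ because $A$ is a twisted form of $H$. I would then consider pairs $(J, \epsilon_J)$ with $J$ a Hopf subalgebra and $\epsilon_J : A_J \to k$ an algebra map, ordered by $(J_1,\epsilon_1) \le (J_2,\epsilon_2)$ iff $J_1 \subseteq J_2$ and $\epsilon_2|_{A_{J_1}} = \epsilon_1$; the pair $(k, \mathrm{id}_k)$ shows the set is non-empty, and Zorn's Lemma produces a maximal pair $(J_{\max}, \epsilon_{\max})$. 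It remains to prove $J_{\max} = H$, for then $\epsilon_{\max} : A = A_H \to k$ is the desired point.

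Assume $J_{\max}\subsetneq H$ and pick a finite-type Hopf subalgebra $K \not\subseteq J_{\max}$; put $J := J_{\max}\cap K$, which is of finite type by \cite[Corollary~3.11]{Tak}, and note $A_{J_{\max}}\cap A_K = A_J$. The extension step has two ingredients. First, $A_K$ is faithfully flat over $A_J$: after a base field extension $L/k$ trivializing the torsor, the inclusion $A_J \subseteq A_K$ becomes $J_L \subseteq K_L$ (here I use $A_L \cong H_L$ as comodule algebras together with the identity $\Delta_H^{-1}(H\otimes J) = J$, valid for any Hopf subalgebra), and a commutative Hopf algebra is faithfully flat over any Hopf subalgebra, so faithful flatness descends along $L/k$. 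Consequently $A_K \otimes_{A_J} k$ is a non-zero finite-type $k$-algebra, so by the Nullstellensatz it has a $k$-point, which composes to an algebra map $\epsilon_K : A_K \to k$ extending $\epsilon_{\max}|_{A_J}$. Second, I would glue $\epsilon_{\max}$ and $\epsilon_K$ using the isomorphism $A_{J_{\max}}\otimes_{A_J}A_K \xrightarrow{\simeq} A_{J_{\max}K}$, $x\otimes y \mapsto xy$, the twisted-form counterpart of \eqref{EKJ}: it again reduces, after base change to $L$, to $J_{\max,L}\otimes_{J_L}K_L \cong (J_{\max}K)_L$, hence holds by faithfully flat descent. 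Since $\epsilon_{\max}$ and $\epsilon_K$ agree on $A_J$, they determine an algebra map $A_{J_{\max}K}\to k$ restricting to $\epsilon_{\max}$, producing a pair strictly above $(J_{\max},\epsilon_{\max})$, a contradiction. Hence $J_{\max}=H$.

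The main obstacle I anticipate is the descent step that supplies both the faithful flatness of $A_K$ over $A_J$ and the gluing isomorphism: one must verify that forming the pullback subalgebras $A_J = \rho_A^{-1}(A\otimes J)$ commutes with flat base change and that, under a trivialization $A_L \cong H_L$, these subalgebras correspond precisely to the Hopf subalgebras $J_L$, so that the two statements reduce cleanly to \eqref{EKJ} and to the faithful flatness of a commutative Hopf algebra over a Hopf subalgebra. Everything else, namely the translation-map computation of the first paragraph and the Zorn bookkeeping, is routine.
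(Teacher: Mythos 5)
Your proof is correct, and its skeleton coincides with the paper's: a Zorn's-lemma induction over Hopf subalgebras whose extension step is powered by Takeuchi's faithful-flatness theorem \cite[Theorem 3.1]{Tak}, the decomposition $J_{\max}\otimes_{J_{\max}\cap K}K\simeq J_{\max}K$ of \eqref{EKJ}, and the Nullstellensatz. The implementation, however, is genuinely different. The paper's pairs carry $H$-colinear algebra maps $f : J \to A$ rather than characters of $A_J$; it trivializes the restricted torsors over $k$ itself --- $A_K\cong K$ by the finite-type case of the proposition (\cite[Section 18.3]{Wa}) and $A_J\cong J$ by the Hopf-module theorem \cite[Theorem 4.1.1]{Sw} applied to the $J$-Hopf module structure that $f$ induces on $A_J$ --- whereupon $f$ becomes $\widetilde{g}$ for a character $g : J \to k$, and the extension problem is literally that of extending $g$ along the faithfully flat inclusion $J\subseteq K$. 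You never trivialize $A_J$ or $A_K$ over $k$; instead you transport faithful flatness and the gluing isomorphism from $J_L\subseteq K_L$ down to $A_J\subseteq A_K$ by faithfully flat descent along a trivializing field extension $L/k$, using that $A_J=\operatorname{Ker}(A\to A\otimes H/J)$ commutes with base change and that $\Delta_H^{-1}(H\otimes J)=J$. Correspondingly, the endgame differs: the paper's maximal pair $(H,f_{\max})$ is an isomorphism at once by the Hopf-module theorem, whereas your maximal pair only yields a $k$-point of $A$, so you need the separate (standard, and correctly sketched) translation-map argument that a Galois extension with a rational point is trivial. Your route buys a cleaner conceptual split --- ``a point exists'' plus ``a torsor with a point is trivial'' --- and keeps the Hopf-module theorem out of the induction; the price is the list of descent verifications you flag at the end, to which one should add descent of finite generation, so that $A_K$ is of finite type over $k$ and the Nullstellensatz really applies to the non-zero algebra $A_K\otimes_{A_J}k$; all of these are routine.
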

\begin{proof}
Suppose that $A=(A,\rho_A)$ is an $H$-Galois extension over $k$. 
From \eqref{Ebeta} notice
\begin{equation}\label{EkAcoH}
k=A^{\operatorname{co}H}\, (:=\{ \, x \in A \mid \rho_A(x)=x\otimes 1\, \}). 
\end{equation}
We may suppose that $H$ is not of finite type, since $A$ is  
trivial by Hilbert's Nullstellensatz, in case $H$ is of finite type; see \cite[Section 18.3]{Wa}. 
Given a Hopf subalgebra $J$ of $H$, define
\begin{equation}\label{EAJ}
A_J:=\rho_A^{-1}(A \otimes J). 
\end{equation}
This turns naturally into a $J$-comodule algebra, and is, in fact, a $J$-Galois extension over $k$.
For the isomorphism \eqref{Ebeta} restricts to an isomorphism $A \otimes A_J \overset{\simeq}{\longrightarrow} A\otimes J$
of $J$-comodule algebras, which shows by specialization that $A_J$ is a twisted form of $J$. 
In particular, we have
$A_J^{\operatorname{co}J}=k$; see \eqref{EkAcoH}. 
Suppose that we are given an $H$-colinear algebra map $f : J \to A$.
Then we see that $f$ maps into $A_J$, and there results a $J$-colinear algebra map $J \to A_J$, 
through which $A_J$ is a $J$-module, and is, moreover, a $J$-Hopf module \cite[Section 4.1]{Sw}. 
It follows by the Hopf-module Theorem \cite[Theorem 4.1.1]{Sw} that the resulting is, in fact, an isomorphism  
\begin{equation}\label{EJAJ}
J=A_J^{\operatorname{co}J}\otimes J \overset{\simeq}{\longrightarrow} A_J
\end{equation}
of $J$-comodule algebras; see the proof of Proposition \ref{Plci}.

Analogously to the proof of Theorem \ref{T3rd} (1), let us consider a pair $(J, f)$ of a Hopf subalgebra $J$ of $H$ and an $H$-colinear
algebra map $f : J \to A$. A trivial example is $(k, k \hookrightarrow A)$.
The set of all those pairs, into which we introduce 
the natural order defined by $(J_1, f_1)\le (J_2,f_2)$ if $J_1\subset J_2$ and 
$f_1=f_2|_{J_1}$, has a maximal pair, $(J_{\max}, f_{\max})$, by Zorn's Lemma. 
As is seen from the cited proof, it remains to prove the following.

\begin{claim}
Given finite-type Hopf subalgebras $J$ and $K$ of $H$ such that $J\subset K$, every $H$-colinear
algebra map $J \to A$ can extend to such a map $K \to A$. 
\end{claim}

Indeed, in order to see $J_{\max}=H$, which will complete the proof, we suppose on the contrary that 
$J_{\max} \subsetneqq H$, 
and choose arbitrarily a finite-type Hopf subalgebra $K$ of $H$ which is not included in $J_{\max}$. The
claim applied to $J_{\max}\cap K$ and $K$ gives an $H$-colinear algebra map 
from $J_{\max} K$\, ($=J_{\max}\otimes_{J_{\max}\cap K}K$, see \eqref{EKJ}) to $H$ which extends
$f_{\max}$; the extended map, paired with $J_{\max} K$, gives a pair properly greater than $(J_{\max}, f_{\max})$. 
This contradiction shows the desired equality.

To prove the claim let us be given an $H$-colinear algebra map $f : J \to A$, which results in an 
isomorphism $J \overset{\simeq}{\longrightarrow} A_J$ of $J$-comodule algebras such as in \eqref{EJAJ}. 
As was seen in the first paragraph of the proof, 
one can choose an isomorphism $K \overset{\simeq}{\longrightarrow} A_{K}$ of $K$-comodule algebras, which we
identify with the identity map $\operatorname{id}_{K}$ of $K$. Then $f$ is identified with an automorphism
$f : J \overset{\simeq}{\longrightarrow} J$\, ($=\Delta_K^{-1}(K\otimes J)$, see \eqref{EAJ}) of 
the $J$-comodule algebra $J$, which is necessarily in the form
\[
f : J \to J,\quad f(x)=g(x_{(1)})x_{(2)}, 
\]
where $g=\varepsilon_J\circ f$, or $f=\widetilde{g}$ in the notation of \eqref{Etilde}. 
It remains to prove that this can extend to some automorphism 
$f': K \overset{\simeq}{\longrightarrow} K$ of the $K$-comodule algebra $K$. Since such an $f'$ is necessarily in 
the form $f'(x)=g'(x_{(1)})x_{(2)}$ with $g': K \to k$ an algebra map, we need to prove that
the algebra map $g : J\to k$ can extend to an algebra map $K \to k$. Indeed, this 
is possible by Hilbert's Nullstellensatz since $K$ is faithfully flat over $J$ by \cite[Theorem 3.1]{Tak}. 
\end{proof}

\begin{rem}\label{RTan}
Suppose $\mathscr{C}=\mathsf{sVec}$ or $\mathsf{Ver}_p^{\mathrm{ind}}$; see \eqref{Ecat}. 
The result just proven holds more generally in $\mathscr{C}$.
To see this, suppose that $H$ is a Hopf algebra in $\mathscr{C}$, and let $\mathfrak{G}=\operatorname{Spec}H$
be the corresponding affine group scheme. 
Since every non-zero algebra in $\mathscr{C}$ has a field as a non-zero quotient 
ordinary algebra, it follows that in $\mathscr{C}$, an $H$-Galois extension over the unit object
$\mathbf{1}$ is the same as
a twisted form of the $H$-comodule algebra $H$. 
Those forms are classified by the 1st Amitsur cohomology
$H^1(\mathbf{1},\mathfrak{G})_{\mathscr{C}}$ constructed in $\mathscr{C}$. But this is naturally identified with
the ordinary $H^1(k,\mathfrak{G}_0)$, or namely, we have
\[
H^1(\mathbf{1},\mathfrak{G})_{\mathscr{C}}=H^1(k,\mathfrak{G}_0),
\]
where $\mathfrak{G}_0=\operatorname{Spec}\overline{H}$ is the affine group scheme 
corresponding to the ordinary Hopf algebra $\overline{H}$ (see \eqref{EHbar}, \eqref{EHbar1}) associated with $H$. For the complex 
for computing $H^1(\mathbf{1},\mathfrak{G})_{\mathscr{C}}$, which is in the form
\[
\begin{xy}
(0,0)   *++{\mathfrak{G}(L)\rightrightarrows \mathfrak{G}(L\otimes L)\rightrightarrows
\mathfrak{G}(L\otimes L\otimes L),}  ="1",
(3.1,-2.1)  *++{\to} ="2"
\end{xy}
\]
is naturally identified with the one 
\[
\begin{xy}
*++{\mathfrak{G}_0(L)\rightrightarrows \mathfrak{G}_0(L\otimes L)\rightrightarrows
\mathfrak{G}_0(L\otimes L\otimes L)},
(4.4,-2.1)  *++{\to} 
\end{xy}
\]
for computing $H^1(k,\mathfrak{G}_0)$, where $L$ ranges through all extension-fields of $k$; see 
\cite[Section 5.2]{MOT}. 
Therefore, the proved Proposition \ref{Ptor} implies the generalized result in the categories. 
\end{rem}

\vspace{3mm}

\noindent
\textbf{Funding}\quad
The second-named author is supported by Japan Society for Promotion of Science, KAKENHI, Grant Numbers 23K03027.

\vspace{3mm}

\noindent
\textbf{Data Availability}\quad All data generated or analyzed during this study are included in this article.

\vspace{5mm}

\noindent
{\large{\textbf{Declarations}}}

\vspace{3mm}

\noindent
\textbf{Conflicts of Interest}\quad The authors have no conflict of interest to declare that are relevant to this article.

\end{document}